\newtheorem{Defn}{Definition}[section]
\newtheorem{Lemma}[Defn]{Lemma}
\newtheorem{prop}[Defn]{Proposition}
\newtheorem{theorem}[Defn]{Theorem}
\newtheorem{remark}[Defn]{Remark}
\newtheorem{Corollary}[Defn]{Corollary}
\title{A generalization of Dijkgraaf-Witten Theory}
\author{MINKYU KIM}
\date{}
\address{Graduate School of Mathematical Sciences \\ University of Tokyo}
\email{kim@ms.u-tokyo.ac.jp}
\begin{document}

\maketitle

\begin{abstract}
The main purpose of this paper is to give a generalization of Dijkgraaf-Witten theory.
We construct a TQFT for $E$-oriented manifolds using a pairing of spectra $\mu : E \wedge F \to G$ and a representative of an $F$-cohomology class of the classifying space of a finite group.
If $E= H \mathbb{Z}$, $F=G=HU(1)$ and the pairing is induced by the $\mathbb{Z}$-module structure of $U(1)$, then the TQFT reproduces Dijkgraaf-Witten theory.
For the case that each of spectra $E,F,G$ is given as the $K$-theory spectrum $KU$, we further generalize our construction based on non-commutative settings.
\end{abstract}

\tableofcontents
\section{Introduction}
The main goal of this paper is to give a generalization of Dijkgraaf-Witten theory.
Our strategy is to use a categorical group version of generalized (co)homology theory or $KK$-theory in order to construct a Lagrangian classical field theory which yields a TQFT via an integral.

Dijkgraaf-Witten invariant for $n$-manifolds is an invariant for oriented closed $n$-manifolds which is constructed from a singular cohomology class $\alpha \in H^{n} (B\Gamma ; U(1)) $ of the classifying space $B\Gamma$ of a finite group $\Gamma$ \cite{DW}, \cite{Wakui}, \cite{FQ}, \cite{Freed_CS}, \cite{FHLT}.
The class $\alpha$ induces a $U(1)$-valued characteristic number for classifying maps on closed $n$-manifolds.
Dijkgraaf-Witten invariant of an $n$-manifold $X$ is defined via an integral of the characteristic number over classifying maps on $X$.

Dijkgraaf-Witten theory is a TQFT which extends the Dijkgraaf-Witten invariant :
An (($n$-1)+1)-dimensional TQFT consists of two assignments.
It assigns a linear space to a closed ($n$-1)-manifolds and a linear map to an $n$-bordism.
We require that these assignments are compatible with each other in the sense that TQFT is given as a strong symmetric monoidal functor from $n$-cobordism category to a category of linear spaces \cite{Atiyah1}.
A functor $Z$ from a cobordism category to a linear category over a commutative field $\mathbb{F}$ yields an invariant for top dimensional closed manifolds.
The $n$-bordism $\emptyset \to X^n \leftarrow \emptyset$ in the domain cobordism category determined by closed $n$-manifold corresponds to a morphism $Z(X^n) : Z(\emptyset ) \to Z(\emptyset)$.
Moreover if we have an isomorphism $Hom ( Z (\emptyset), Z(\emptyset)) \cong \mathbb{F}$, then  $Z(X^n)$ produces a number in the ground field $\mathbb{F}$.
We say that the invariant extends to a  functor $Z$.

R. Dijkgraaf and E. Witten proposed a method to construct a (2+1)-dimensional Dijkgraaf-Witten theory starting from a singular cocycle of the classifying space $B\Gamma$ of a finite group $\Gamma$ \cite{DW}.
M. Wakui constructed (2+1)-dimensional TQFT in a rigorous way based on Dijkgraaf-Witten's idea \cite{Wakui}.
D. Freed and F. Quinn generalized Dijkgraaf-Witten theory to a higher dimensional TQFT starting from a singular cocycle of $B\Gamma$ \cite{FQ}.
They also simplified the construction of the topological action by using a canonical integral of an $n$-cocycle on an closed oriented ($n$-1)-manifold, which is valued at some torsors.
They refer to the topological action as a Lagrangian classical field theory.
A. Sharma and A. A. Voronov reformulated Dijkgraaf-Witten theory using a categorical framework \cite{SV}.
The canonical integral introduced by Freed and Quinn is well-understood under the categorical framework of Sharma and Voronov.
G. Heuts and J. Lurie generalized Dijkgraaf-Witten theory focusing on the ambidexterity of the target category of local systems on spaces of connections \cite{HL}.

We introduce a generalized Dijkgraaf-Witten invariant using a generalized cohomology class of the classifying space $B\Gamma$.
Some of main ingredients for the construction are a ring spectrum $E$, two spectra $F,G$, a pairing $E\wedge F \to G$ and an $F$-cohomology class of $\alpha \in F^n (B\Gamma)$.
We define an invariant $Z_{\varphi,\alpha}$ for closed $E$-oriented $n$-manifolds where we explain $\varphi$ in subsection \ref{subsec_DW_invariant_GCH}.
It gives a generalization of Dijkgraaf-Witten invariant in the sense that our result is reduced to Dijkgraaf-Witten invariant if we consider $E=H\mathbb{Z}$, $F=HU(1)$, the Eilenberg-Maclane spectra associated with abelian groups $\mathbb{Z}$, $U(1)$.
In that case, the pairing is induced by the $H\mathbb{Z}$-module structure of $HU(1)$.

For a ring spectrum $E$, we define a cobordism category $\mathbf{Cob}^E_{n-1}$.
We construct a TQFT $\hat{Z}_{\hat{\varphi},\hat{\alpha}}$ which is defined on the category $\mathbf{Cob}^E_{n-1}$ and extends the invariant $Z_{\varphi,\alpha}$.
Here, $\hat{\alpha}$ is a representative $\alpha$ of the class $\alpha \in F^n(B\Gamma)$.

On the one hand, we also introduce another invariant $Z_{\psi,\beta}$ for closed $KU$-oriented $n$-manifolds.
Here, $\beta$ is a $KK$-theory class in $ KK( A , C(B\Gamma) \otimes B)$ (see subsection \ref{subsec_DW_invariant_KK}) where $A,B$ are C*-algebras.
One of its advantage is that it becomes possible to cooperate with non-commutative settings.
In addition, it gives a generalization of the previous invariant $Z_{\varphi,\alpha}$ under the condition $E=F=G=KU$.
In fact, $Z_{\psi,\beta}$ coincides with $Z_{\varphi,\alpha}$ if $A= C_0 (\mathbb{R}^n)$, $B=\mathbb{C}$ and the class $\alpha$ corresponds to the class $\beta$ under the homomorphism $KU^n (B\Gamma) \to KK(C_0 (\mathbb{R}^n) , C(B\Gamma) )$.

We construct a TQFT $\hat{Z}_{\hat{\psi},\hat{\beta}}$ which is defined on the category $\mathbf{Cob}^{KU}_{n-1}$ and extends the invariant $Z_{\psi,\beta}$.
Here, $\hat{\beta}$ is a representative of the class $\beta \in KK(A, C(B\Gamma) \otimes B)$.

Since the invariants $Z_{\varphi,\alpha}$ and $Z_{\psi,\beta}$ coincide to each other under some conditions mentioned before, the dimension of linear spaces assigning to ($n$-1) closed manifolds coincide to each other.
In fact, $Z (Y^{n-1} \times S^1)$ coincides with the dimension of the linear space $Z (Y^{n-1})$ if $Z = Z_{\varphi,\alpha} , Z_{\psi,\beta}$ is defined on a closed ($n$-1)-manifold $Y^{n-1}$.
Nonetheless, we do not have a natural isomorphism between $\hat{Z}_{\hat{\varphi},\hat{\alpha}}$ and $\hat{Z}_{\hat{\psi},\hat{\beta}}$.
It seems necessary to construct a `map' between classes of representatives of $KU^n(B\Gamma)$ and $KK(S^n , C(B\Gamma))$ in an appropriate way, but we do not yet have such a `map'.

Note that the TQFT we construct is {\it partial} unless the generalized (co)homology theory satisfies some condition which ordinary (co)homology theory satisfies automatically.
The object class of its domain cobordism category $\mathbf{Cob}^E_{n-1}$ consists of ($n$-1)-manifolds $Y^{n-1}$ such that $E_n (Y^{n-1}) \cong 0$ where $n$ is the top dimension of manifolds we deal with.

The construction basically follows the approach of Sharma-Voronov.
Sharma-Voronov use categorical (co)homology group to extend characteristic number of $n$-dimensional closed $B\Gamma$-manifolds to $(n-1)$ dimension where the extended characteristic number is called Lagrangian classical field theory in the literature.
We formulate a categorical framework for generalized (co)homlogy theory and $KK$-theory.
The categorical group version of singular (co)homology theory applied by A. Sharma and A. A. Voronov is due to  A. del R\`{i}oa, J. Mart\`{i}nez-Morenob, E. M. Vitale \cite{RMV}.
We introduce a categorical group version of generalzied (co)homology theory and $KK$-theory based on a fundamental groupoid of certain simplicial sets.
Sharma and Voronov constructed a Lagrangian classical field theory using the cap product associated with that categorical (co)homology theory.
We use a pairing of generalized (co)homology theories or the Kasparov product of $KK$-theory to construct a Lagrangian classical field theory.

We only explain the construction of TQFT starting from a generalized cohomology class.
We give a categorical framework of KK-theory which is used to construct the TQFT using a KK-theory class in a parallel way given in this paper.

In this paper, we use the notion of spectrum and smash product following Adams \cite{Adams}, Switzer \cite{Switzer}.
$E= \{ E_n , s_n \}$ is a spectrum if $E_n$ is a pointed CW-complex and $s_n : \Sigma E_n \to E_{n+1}$ is a pointed CW-embedding for $n \in \mathbb{Z}$.
There are some natural equivalences under which the smash product is associative, commutative, and has the sphere spectrum as a unit upto coherent natural equivalence.

A. del R\`{i}oa, J. Mart\`{i}nez-Morenob, E. M. Vitale \cite{RMV} call by (co)homology categorical group the categorical group version of (co)homology theory induced by a chain complex of categorical groups.
We follow the convention by calling by generalized (co)homology categorical group (resp. $KK$ categorical group) the categorical group version of generalized (co)homology group ($KK$-group).

The organization of this paper is as follows.
In section \ref{Notations}, we introduce some notations which appear very often throughout this paper.
In section \ref{section_Generalized Dijkgraaf-Witten invariants}, we define generalized Dijkgraaf-Witten invariants.
In section \ref{section_main_results}, we give the main results of this paper without precise meaning of notations.
From section \ref{Generalized (co)homology theory SCG} to section \ref{Quantum theory from generalized (co)homology}, we construct a TQFT based on generalized (co)homology theory.
In section \ref{Generalized (co)homology theory SCG}, we introduce a symmetric categorical group version of generalized (co)homology theory.
In section \ref{Classical theory from generalized (co)homology}, we construct a classical field theory using generalized (co)homology theory.
In section \ref{Quantum theory from generalized (co)homology}, we apply push-forward to the result in section \ref{Classical theory from generalized (co)homology} in order to obtain a TQFT for $E$-oriented manifolds.
In section \ref{compute_examples}, we give some computations of the TQFT's.
In section \ref{commets_about_rho}, we give a remark about the symmetric monoidal functors $\hat{\varphi},\hat{\psi}$ which are used to construct the TQFT's.
In section \ref{section_categorical_KK_theory}, we introduce a symmetric categorical group version of $KK$-theory.

\section*{Acknowledgements}
The author is deeply grateful to his supervisor Mikio Furuta who gave him great opportunities to meet various mathematical objects.
In particular, the author's interest in $K$-theory and C*-algebras is due to Mikio Furuta.
The author expresses gratitude to Yosuke Kubota who devoted his precious time to give the author valuable advices about how to write a research paper.
The author was also helped by Yosuke Kubota to deal with some technical problems when he was trying to  introduce a categorical group version of $KK$-theory based on the Kasparov picture in the first draft.

\section{Notations}
\label{Notations}
\begin{itemize}
\item
Let $X,Y$ be spaces.
We denote by $Map (X,Y)$ the set of maps from $X$ to $Y$.
We denote by $MAP(X,Y)$ the simplicial set whose $k$-skeleton $MAP(X,Y)_k$ is given by 
\begin{align}
Map ( X \times \triangle (k) , Y) ,
\end{align}
where $\triangle (k)$ is the $k$-simplex.
Then the face maps and degeneracy maps on simplices induce face maps and degeneracy maps of $MAP(X,Y)_k$'s.
Similarly, we define a simplicial set $MAP_\ast (X,Y)$ for based spaces $X,Y$ where the $k$-skeleton is given by 
\begin{align}
Map_\ast (X\wedge (\triangle (k))^+ , Y) .
\end{align}

\item
 About bicategories, we follow the terminologies introduced in \cite{T.Leinster}.
 We introduce following notations of several categories.
  \begin{enumerate}
  \item
  $\mathbf{Gpd}$ is the 2-category of groupoids, functors and natural transformations.  
  \item
  $\mathbf{CW}_\ast$ is the 2-category whose objects are given as based CW-spaces and the morphism category $\mathbf{CW}_\ast (X,Y)$ is given as the fundamental groupoid $\Pi_1 MAP_\ast (X,Y)$ where $MAP_\ast (X,Y)$ is the simplicial set consisting of based maps from $X$ to $Y$.
  \item
  $\mathbf{2Cat}$ is a 2-category of 2-categories, strict homomorphisms and strict transformations.
  \item
  $\mathbf{Vect}_{\mathbb{F}}$ is the category of finite-dimensional vector spaces over the field $\mathbb{F}$ and linear homomorphisms.
  \item 
  For an abelian group $M$, we define a symmetric categorical group $M\mathbf{Tor}$ as follows.
  Its object class consists of $M$-torsors (i.e. principal $M$-bundle over a point).
  Its morphism class consists of $M$-equivariant maps between $M$-torsors.
  For two $M$-torsors $T,T^\prime$, we define $T \otimes T^\prime$ obtained by dividing the product $T \times T^\prime$ by $(t,t^\prime) \sim (mt , mt^\prime),~m \in M$.
  Then the assignment $(T,T^\prime) \mapsto T\otimes T^\prime$ determines a functor $\otimes : M\mathbf{Tor} \times M\mathbf{Tor} \to M \mathbf{Tor}$ which gives a symmetric categorical group structure to $M\mathbf{Tor}$ in an obvious way.
  \end{enumerate}
\end{itemize}


\section{Generalized Dijkgraaf-Witten invariants}
\label{section_Generalized Dijkgraaf-Witten invariants}

Let $\mathbb{F}$ be a commutative field with characteristic zero.
In this section, we define two $\mathbb{F}$-valued invariants which are denoted by $Z_{\varphi,\alpha}$ and $Z_{\psi,\beta}$ throughout this paper.
The invariant $Z_{\varphi,\alpha}(X^n)\in \mathbb{F}$ is defined for closed $E$-oriented $n$-manifolds $X^n$ where $E$ is a ring spectrum.
Its definition also depends on choices of a pairing of spectr $\mu : E \wedge F \to G$, an $F$-cohomology class $\alpha \in F^n (B\Gamma)$ and a group homomorphism $\varphi : G_0 (\mathrm{pt}) \to  \mathbb{F}^{\times}$.
If we consider $E= H\mathbb{Z}$, $F=G=HU(1)$, $\mathbb{F}= \mathbb{C}$ and the inclusion $\varphi : G_0 (\mathrm{pt}) \cong U(1) \to \mathbb{C}^{\times}$, then the invariant $Z_{\varphi,\alpha}$ reproduces the Dijkgraaf-Witten invariant \cite{DW}, \cite{FQ}.
Due to this fact, we regard it as a generalization of the Dijkgraaf-Witten invariant.
The invariant $Z_{\psi,\beta}(X^n)\in \mathbb{F}$ is defined for closed $KU$-oriented $n$-manifolds.
Its definition also depends on choices of two C*-algebras $A,B$, a $KK$-group class $\beta \in KK(A, C(B\Gamma)\otimes B)$ and a group homomorphism $\psi : KK(A, S^n B) \to \mathbb{F}^{\times}$.
Here, $KK(A, S^n B)$ is the $KK$-group corresponding to C*-algebras $A$ and $S^n B$.

\subsection{An invariant $Z_{\varphi,\alpha}$ from generalized (co)homology theory}
\label{subsec_DW_invariant_GCH}

In this paper, we use the notion of spectrum following Adams \cite{Adams}, Switzer \cite{Switzer}.
The notion of spectra is actually not that essential in this section since we only need the generalized (co)homology theories induced by spectra.
Nonetheless, we use it in order to relate the invariants defined in this section with TQFT's constructed later.

Let $E$ be a ring spectrum.
Suppose that we are given a pairing of spectra $\mu : E\wedge F \to G$..
Then the pairing $\mu$ induces a Kronecker pairing $\langle ~,~ \rangle : E_n (B\Gamma) \otimes F^n (B\Gamma) \to G_0 (\mathrm{pt})$.

Let $\alpha \in F^n (B\Gamma)$ be an $F$-cohomology class of the classifying space.

\begin{Defn}
\label{CFT_GCH_Salpha_defn}
For an $E$-oriented closed n-manifold $X^n$ and a continuous map $f : X^n \to B\Gamma$, we define 
\begin{align}
S_{\alpha} (X^n , f) \stackrel{\mathrm{def.}}{=} \langle f_\ast [X^n]_E , \alpha \rangle \in G_0 (\mathrm{pt}) ,
\end{align}
where $[X^n]_E \in E_n (X^n)$ is the fundamental class of the $E$-oriented manifold $X^n$.
\end{Defn}

\begin{Defn}
We define two abelian monoids $\mathbf{M}^{E}_n (T)$ and $\mathbf{M}^n_{E} (T ; N)$ where $T$ is a space and $N$ is an abelian monoid :
\begin{align}
\mathbf{M}^{E}_n (T) \stackrel{\mathrm{def.}}{=} \{ (X^n , f : X^n \to T) \} / \sim , \notag \\
\mathbf{M}^n_{E} (T ; N) \stackrel{\mathrm{def.}}{=} Hom (\mathbf{M}^{E}_n (T)  , N) . \notag
\end{align}
The equivalence relation $\sim$ is defined by 
\begin{align}
(X, f) \sim (X^\prime, f^\prime) \Leftrightarrow \exists \alpha : X \stackrel{\cong}{\to} X^\prime ~~ \mathrm{s.t.}~~ f^\prime \circ \alpha \simeq f . \notag
\end{align}
Following Turaev \cite{Turaev_HQFT}, we call the pair $(X^n , f : X^n \to T)$ as a $T$-manifold and $\mathbf{M}^{E}_n (T) $ the set of equivalence classes of $T$-manifolds.
We give the set $\mathbf{M}^{E}_n (T)$ an abelian monoid structure using the disjoint union.
Then $\mathbf{M}^n_{E} (T ; N)$ is the set of homomorphisms between monoids $\mathbf{M}^{E}_n (T)  , N$.
\end{Defn}

It is easy to check that the assignment $T \mapsto \mathbf{M}^{E}_n (T)$ is covariant so that $T \mapsto \mathbf{M}^n_{E} (T; N)$ is contravariant.
For a continuous map $f : T \to T^\prime$, we denote the induced maps by $f_\ast : \mathbf{M}^{E}_n (T) \to \mathbf{M}^{E}_n (T^\prime)$, $f^\ast : \mathbf{M}^n_{E} (T^\prime ; N) \to \mathbf{M}^n_{E} (T ; N) $.

\begin{Defn}
\label{pushforward_!_def}
Let $c: B\Gamma \to \mathrm{pt}$ be the collapsing map.
Consider the field $\mathbb{F}$ as an abelian monoid using its multiplication structure.
We define a map $c_{!} : \mathbf{M}^n_{E} (B\Gamma ; \mathbb{F}) \to \mathbf{M}^n_{E} (\mathrm{pt} ; \mathbb{F})$ by
\begin{align}
c_{!} (I) \stackrel{\mathrm{def.}}{=} \sum_{f: X^n \to B\Gamma} \frac{I (X^n , f)}{\sharp Aut (f^\ast (E\Gamma))} \in \mathbb{F} \notag.
\end{align}
Here, $f$ runs on the set of homotopy classes of continuous maps.
\end{Defn}

In order to check that this definition is well-defined, one should check that $c_{!}(I)$ gives a homomorphism from $\mathbf{M}^E_n (\mathrm{pt})$ to the multiplicative monoid $\mathbb{F}$.
Note that $I (X^n_0 , f_0) \cdot I(X^n_1 , f_1) = I (X^n_0 \amalg X^n_1 , f_0 \amalg f_1) \in F_0 (\ast)$ since $I \in \mathbf{M}^n_E (B\Gamma ; \mathbb{F})$ so that we obtain
\begin{align}
c_{!}(I) (X^n_0) \cdot c_{!}(I) (X^n_1) = c_{!}(I) (X^n_0 \amalg X^n_1 ) \in \mathbb{F}.
\end{align}

We define a generalized Dijkgraaf-Witten invariant for closed $E$-oriented manifolds as follows.

\begin{Defn}
\label{TQFT_GCH_Zalpha_defn}
Let $\varphi : F_0 (\mathrm{pt} ) \to \mathbb{F}^{\times}$ be a group homomorphism.
We define $S_{\varphi,\alpha} \stackrel{\mathrm{def.}}{=} \varphi_\ast (S_{\alpha} )$ and $Z_{\varphi,\alpha} \stackrel{\mathrm{def.}}{=} c_! (S_{\varphi,\alpha})$.
In other words, for an $E$-oriented closed n-manifold $X^n$, we define
\begin{align}
Z_{\varphi,\alpha} (X^n) \stackrel{\mathrm{def.}}{=} \sum_{f: X^n \to B\Gamma} \frac{\varphi(S_{\alpha} (X^n , f))}{\sharp Aut (f^\ast (E\Gamma))} \in \mathbb{F} .
\end{align}
\end{Defn}

\subsection{An invariant $Z_{\psi,\beta}$ from $KK$-theory}
\label{subsec_DW_invariant_KK}

$K$-theory for C*-algebras and analytic $K$-homology are generalized into a common framework called $KK$-theory \cite{Blackadar}.
$KK$-theory is a bivariant functor $(A,B) \mapsto KK(A,B) $ from the category of C*-algebras and homomorhisms to the category of abelian groups and homomorphisms, which is contravariant with respect to $A$ and covariant with respect to $B$.

We have a natural isomorphism $KU_k (Z,Z^\prime) \cong KK(C_0 (Z \backslash Z^\prime),C_0 (\mathbb{R}^k))$ for a pair of finite CW complexes $(Z,Z^\prime)$ (Theorem 4 \cite{Kasparov}).
Here the left hand side is the homology theory defined by the spectrum $KU$.
If $Z^k$ is a compact $KU$-oriented $k$-manifold, then its $KU$-orientation lives in the $KK$-group $KK(C(Z^k \backslash \partial Z^k) , C_0 (\mathbb{R}^k))$.
We denote it by $[Z^k]_{aKU}$.

As a technical assumption, we fix a CW complex structure of the classifying space $B\Gamma$, whose $r$-skeleton $B\Gamma^{(r)}$ is compact and Hausdorff for every $r\in \mathbb{Z}^{>0}$.
Since a smooth $k$-manifold $Z^k$ has a $k$-dimensional CW complex structure, the celluar approximation theorem induces an equivalence of groupoids,
\begin{align}
\Pi_1 MAP (Z^k , B\Gamma^{(r)}) \to \Pi_1 MAP (Z^k , B\Gamma),~r \geq k+2.
\end{align}
We fix a large $r \geq n+3$ where $n+1$ is the largest dimension of manifolds which we deal with in this paper.
Let us denote by $\Pi_1 MAP^f (Z^k , B\Gamma) \stackrel{\mathrm{def.}}{=} \Pi_1 MAP (Z^k , B\Gamma^{(r)})$.
For such $r$, we denote
\begin{align}
KK(A,C(B\Gamma ) \otimes B) \stackrel{\mathrm{def.}}{=} KK(A, C(B\Gamma^{(r)} ) \otimes B) .
\end{align}
This assumption is necessary since the *-algebra formed by $\mathbb{C}$-valued continuous maps on $B\Gamma$ does not induce a C*-algebra in general.

Let $A,B$ be C*-algebras and let $\beta \in KK(A ,C(B\Gamma) \otimes B)$.
We define an action funcional for principal $\Gamma$-bundles over closed $KU$-oriented $n$-manifolds.

\begin{Defn}
\label{TQFT_KK_Sbeta_defn}
Let $X^n$ be a closed $KU$-oriented n-manifold.
For an object $f \in \Pi_1 MAP^f (X^n, B\Gamma)$, we define 
\begin{align}
S_{\beta} (X^n , f) \stackrel{\mathrm{def.}}{=} f_\ast [X^n]_{aKU} \otimes_{C(X^n)} \beta  \in KK(A , S^n B) 
\end{align}
where $\otimes_{C(X^n)}$ is the Kasparov product contracting $C(X^n)$.
\end{Defn}

We define a generalized Dijkgraaf-Witten invariant $Z_{\psi,\beta}$ for closed $KU$-oriented $n$-manifolds as follows.
Again, one can check that $Z_{\psi,\beta}  \in \mathbf{M}^n_{KU} (\mathrm{pt} ; \mathbb{F})$.

\begin{Defn}
\label{TQFT_KK_Zbeta_defn}
Let $\psi : KK(A, S^n B) \to \mathbb{F}^{\times}$ be a group homomorphism.
We define $S_{\psi,\beta} \stackrel{\mathrm{def.}}{=} \psi_\ast (S_{\beta})$ and $Z_{\psi,\beta} \stackrel{\mathrm{def.}}{=} c_! (S_{\psi,\beta})$.
In other words, for a closed $KU$ oriented $n$-manifold $X$, we define
\begin{align}
Z_{\psi,\beta} (X^n) \stackrel{\mathrm{def.}}{=} \sum_{[f]} \frac{\psi (S_{\beta} (X^n , f))}{\sharp Aut (f^\ast (E\Gamma))} \in \mathbb{F} .
\end{align}
Here, $f$ runs on the set $\pi_0 ( MAP^f (X^n , B\Gamma)) \cong [X^n , B\Gamma^{(r)}]$.
\end{Defn}


\section{Main results}
\label{section_main_results}

In this section, we outline our main results without giving precise definition of notations.
In subsection \ref{mainresult A TQFT from generalized (co)homology theory}, we give one of our main theorems that the invariant $Z_{\varphi,\alpha}$ extends to a TQFT $\hat{Z}_{\hat{\varphi},\hat{\alpha}}$.
We give explanation that it yields the Dijkgraaf-Witten theory \cite{FQ} as its corollary.
In subsection \ref{mainresult A TQFT from KK-theory}, we give our other main theorem that the invariant $Z_{\psi,\beta}$ extends to a TQFT $\hat{Z}_{\hat{\psi},\hat{\beta}}$.
In subsection \ref{subsection_summary_of_const}, we summarize their constructions.
In subsection \ref{main_feat_construction}, we explain the main feature of our construction.

\subsection{A TQFT $\hat{Z}_{\hat{\varphi},\hat{\alpha}}$ from generalized (co)homology theory}
\label{mainresult A TQFT from generalized (co)homology theory}

Let $E$ be a ring spectrum.
We denote by $\mathbf{Cob}^E_{n-1}$ a cobordism category of $E$-oriented smooth manifolds whose top dimension is $n \in \mathbb{Z}^{>0}$.
Its object class consists of ($n$-1)-dimensional closed $E$-oriented smooth manifolds $Y^{n-1}$ such that
\begin{align}
E_n (Y^{n-1} ) \cong 0 .
\end{align}
Here, $E_n$ denotes the $n$-th $E$-homology group.
The morphism class of the category $\mathbf{Cob}^E_{n-1}$ consists of $n$-dimensional smooth $E$-oriented cobordisms connecting such objects.
Due to the additivity of $E$-homology with respect to disjoint union of spaces, the disjoint union of $E$-oriented manifolds induces a symmetric monoidal structure on the category $\mathbf{Cob}^E_{n-1}$.

In section \ref{Generalized (co)homology theory SCG}, we construct a delooping of the abelian group $G_0 (\mathrm{pt})$ which is given by a symmetric categorical group $\mathcal{G} (= \hat{G}_{-1}(S^0))$ such that 
\begin{align}
\pi_1 (\mathcal{G}) \cong G_0 (\mathrm{pt}) ,
\end{align}
as abelian groups.
Then our main theorem is stated as follows.

\vspace*{3mm}
{\bf Main theorem 1.}(see Theorem \ref{corollary_from_cob_tqft_GCH})
Let $\alpha$ be a representative of the $F$-cohomology class $\alpha \in F^n (B\Gamma)$.
Let $\hat{\varphi} : \mathcal{G} \to  \mathbb{F}^{\times} \mathbf{Tor}$ be a symmetric monoidal functor such that $\pi_1 (\hat{\varphi})= \varphi :  G_0 ( \mathrm{pt} ) \to \mathbb{F}^{\times}$.
The invariant $Z_{\varphi,\alpha}$ for closed $E$-oriented $n$-manifolds extends to a strong symmetric monoidal functor,
\begin{align}
\hat{Z}_{\hat{\varphi},\hat{\alpha}} : ( \mathbf{Cob}^E_{n-1} , \amalg) \to ( \mathbf{Vect}_{\mathbb{F}} , \otimes ) .
\end{align}
\vspace*{3mm}

It is a generalization of the Dijkgraaf-Witten theory \cite{DW},\cite{FQ} :
We substitute the following spectra into $E,F$,
\begin{align} 
E = H \mathbb{Z} , ~ F =G = HU(1) .
\end{align}
Here $HM$ is the Eilenberg Maclane spectrum associated with an abelian group $M$.
Let $\varphi :U(1) \cong  G_0 (\mathrm{pt}) = H_0 (\mathrm{pt}  ; U(1)) \to M = \mathbb{C}^{\times}$ be the inclusion.
Then it is lifted to an equivalence of symmetric categorical groups, $\hat{\varphi} : \mathcal{G} \to \mathbb{C}^{\times}\mathbf{Tor}$ (see section \ref{commets_about_rho}).
Under these assumptions, the strong symmetric monoidal functor $\hat{Z}_{\hat{\varphi}, \hat{\alpha}}$ in the theorem gives the Dijkgraaf-Witten theory.
In fact, the cobordism category $\mathbf{Cob}^E_{n-1}$ is isomorphic to the cobordism category of oriented manifolds since any closed $k$-manifold $Z^k$ satisfies $H_{k+1}(Z^k ; \mathbb{Z}) \cong 0$.

\subsection{A TQFT $\hat{Z}_{\hat{\psi},\hat{\beta}}$ from $KK$-theory}
\label{mainresult A TQFT from KK-theory}

Let $A,B$ be C*-algebras.
In section \ref{section_categorical_KK_theory}, we construct a delooping of the abelian group $KK(A, S^n B)$ which is given as a symmetric categorical group $\mathcal{G} (= \mathcal{KK}(A, S^{n+1}B))$ such that 
\begin{align}
\pi_1 (\mathcal{G}) \cong KK(A, S^n B),
\end{align}
as abelian groups.

\vspace*{3mm}
{\bf Main Theorem 2.}
Let $A,B$ be C*-algebras.
Let $\beta$ be a representative of the $KK$-theory class $\beta \in KK(A, C(B\Gamma) \otimes B)$.
Let $\hat{\psi} : \mathcal{G} \to \mathbb{F}^{\times} \mathbf{Tor}$ be a symmetric monoidal functor such that $\pi_1 (\hat{\psi})= \psi :  KK(A, S^n B) \to \mathbb{F}^{\times}$.
The invariant $Z_{\psi,\beta}$ extends to a strong symmetric monoidal functor,
\begin{align}
\hat{Z}_{\hat{\psi},\hat{\beta}} : ( \mathbf{Cob}^{KU}_{n-1} , \amalg) \to ( \mathbf{Vect}_{\mathbb{F}} , \otimes ) .
\end{align}

\subsection{Outline of the construction}
\label{subsection_summary_of_const}

We summarize the construction of TQFT's.
In order to extend our Dijkgraaf invariant to a TQFT, we firstly extend our Lagrangian classical field theory or characteristic number of $B\Gamma$-manifolds.
As the Dijkgraaf-Witten invariant is obtained using the push-forward in Definition \ref{pushforward_!_def}, we apply a push-forward $c_{!} : \mathbf{Cob}^{n-1}_E (B\Gamma ; \mathbf{Vect}_{\mathbb{F}}) \to \mathbf{Cob}^{n-1}_E (\mathrm{pt} ; \mathbf{Vect}_{\mathbb{F}})$ to the extended characteristic number to obtain a TQFT.

We only consider the generalized cohomology case here, but it is possible to discuss the KK-theory case in a parallel way.
It is helpful to recall the definition of the invariant $Z_{\varphi,\alpha}$.
For each closed $n$-manifold $X^n$ which has an appropriate orientation with respect to some spectrum, we are given a set of finite gauge fields over which a Lagrangian classical field theory $S_{\varphi,\alpha}$ is defined.
$S_{\varphi,\alpha}$ is understood as a characteristic number of closed $B\Gamma$-manifolds obtained by composing following maps :
\begin{align}
\mathbf{M}^E_n (B\Gamma ) \to E_n (B\Gamma) \stackrel{\langle \alpha , - \rangle}{\to} G_0 (\mathrm{pt}) \stackrel{\varphi}{\to} \mathbb{F}^{\times}
\end{align}
The first step is that we extend the characteristic number $S_{\varphi,\alpha}$ to closed $(n-1)$-manifolds.
We denote it by $\hat{S}_{\hat{\varphi},\hat{\alpha}}$, which is defined by composing following functors,
\begin{align}
\mathbf{Cob}^{E,rep}_{n-1} (B\Gamma ) \stackrel{\Phi_{B\Gamma}}{\to} \hat{E}_{n+1} (\Sigma^2 B\Gamma^+) \stackrel{\langle \hat{\alpha} ,- \rangle}{\to} \hat{G}_{-1}(S^0) \stackrel{\hat{\varphi}}{\to} \mathbb{F}^{\times}\mathbf{Tor} .
\end{align}
Here, the symmetric monoidal functor $\Phi_{B\Gamma}$ is defined in section \ref{Classical theory from generalized (co)homology}.

Dijkgraaf-Witten invariant $Z_{\varphi,\alpha}(X^n)$ is defined via an integral of the functional $S_{\varphi,\alpha}$ over finite gauge fields.
Our strategy to construct TQFT's is parallel to the construction of the invariant $Z_{\varphi,\alpha}$.
In fact, there exists a push-forward $c_{!} : \mathbf{Cob}^n_E (B\Gamma ; \mathbf{Vect}_{\mathbb{F}}) \to \mathbf{Cob}^n_E (\mathrm{pt} ; \mathbf{Vect}_{\mathbb{F}})$, which is called finite path integral in the literature.
The symmetric monoidal functor $c_{!} \hat{S}_{\hat{\varphi},\hat{\alpha}} \in \mathbf{Cob}^n_E (\mathrm{pt} ; \mathbf{Vect}_{\mathbb{F}})$ obtained by applying it to the extended characteristic number is the TQFT we want.

We should give a remark about $\hat{E}_\bullet$.
In order to extend invariants to codimension-one manifolds, we start with a categorical framework of abelian groups, i.e. symmetric categorical groups.

We lift the generalized (co)homology groups and $KK$-theory groups to categorical groups as follows.
Both of generalized (co)homology groups and $KK$-theory groups can be defined as a homotopy set $\pi_0 (Mor (a,b))$ of a space $Mor (a,b)$ for appropriate object $a,b$, which are spectra for the former case and C*-algebras for the latter case.
Our categorical group version of generalized (co)homology groups and $KK$-theory groups are constructed on the fundamental groupoid $\Pi_1 Mor (a,b)$.

\subsection{Main feature of the construction}
\label{main_feat_construction}

In previous research on Dijkgraaf-Witten theory \cite{DW} \cite{Wakui} \cite{FQ} \cite{SV}, the functor $\Phi_{B\Gamma}$ is constructed implicitly by using simplicial data of manifolds.
Triangulations of manifolds and the classifying space are used in Dijkgraaf-Witten, Wakui, and the singular nerve of manifolds is used in Freed-Quinn, Sharma-Voronov.
In particular, the morphism assigned by $\Phi_{B\Gamma}$ is induced by the natural transformation associated in the 2-exact sequence of categorical (singular) homology groups in Sharma-Voronov.
The main feature of our construction is that we do not need such auxiliary data. 
We construct corresponding morphisms by using a homotopy associated with every bordism.
In this subsection, let us explain the idea.

For a moment, we deal with arbitrary manifolds.
Let $Y$ be a closed $(n-1)$-manifold.
We denote by $B^{\prime}_0 (Y)$ the quotient space $Y \times D^1 / Y \times \partial D^1$ where $D^1 \stackrel{\mathrm{def.}}{=} \{ t \in \mathbb{R} ~|~ -1 \leq t \leq 1 \}$.
For a $n$-bordism $X$ from $Y_0$ to $Y_1$, we construct $B^{\prime}_1 (Y_0 \stackrel{i_0}{\to} X \stackrel{i_1}{\leftarrow} Y_1) = B^{\prime}_1 (X)$ as follows.
We glue the boundaries of $Y_1 \times [-1, 0] \amalg X \times \{0 \} \amalg Y_1 \times [0,1]$ to obtain a bundle $L$ over $D^1$, i.e. we identify,
\begin{itemize}
\item
$(y_1 , 0 ) \sim (i_1 (y_1) ,0)$ where $y_1 \in Y_1$
\item
$(y_0 , 0 ) \sim (i_0 (y_0) ,0)$ where $y_0 \in Y_0$
\end{itemize}
Let us write the projection to $D^1$ by $\pi : L \to D^1$.
We define $B^{\prime}_1 (X)$ as the quotient space $L / \pi (\partial D^1)$.
Then $B^{\prime}_0 (Y)$, $B^{\prime}_1(X)$ are naturally determined by $Y,X$ respectively.
We have a homotopy $h$ denoted in the following diagram,
\begin{equation}
\label{B_diamond_XY}
\begin{tikzcd}
 & B^{\prime}_1(Y_0 \to X \leftarrow Y_1) \arrow[ld, "q_1"] \arrow[rd, "q_0"] &  \\
B^{\prime}_0 (Y_0)  \arrow[rd] & \stackrel{h}{\Longrightarrow} & B^{\prime}_0 (Y_1) \arrow[ld] \\
 & B^{\prime}_0(X) & 
\end{tikzcd}
\end{equation}
The upper two maps are obtained by collapsing $Y_1 \times [-1,0]$ and $Y_0 \times [0,1]$ respectively.
The lower two maps are induced from the inclusions.

A continuous map $g : Y \to B\Gamma$ induces a map $B^{\prime}_0 (g) : B^{\prime}_0 (Y) \to B^{\prime}_0 (B \Gamma)$.
Let us consider a bordism between $B\Gamma$-manifolds.
In other words, we are given a bordism $X$ from $Y_0$ to $Y_1$ and continuous maps $g_k : Y_k \to B\Gamma$, $f : X \to B\Gamma$ such that the restriction of $f$ to boundaries coincides with $g_0 \amalg g_1$.
Then we can construct a homotopy from $B^{\prime}_0 (g_0) \circ q_1$ to $B^{\prime}_0 (g_1) \circ q_0$ by using the previous homotopy $h$.

Let $E = \{E_n , s_n \}$ be a spectrum.
From now on, we consider $E$-oriented manifolds and bordisms.
An $(n-1)$-th $E$-homology class of $Y$ is represented by a map from the sphere spectrum $S^{n+1}$ to $E \wedge \Sigma^{\infty} \Sigma^2 Y^+$.
Note that we have a homeomorphism between spaces $\Sigma^2 Y^+$ and $B^{\prime}_0 (Y)$ defined above.
Hence, we consider a representative of an $(n-1)$-th $E$-homology class of $Y$ as a map to $E \wedge \Sigma^{\infty} B^{\prime}_0 (Y)$.
Then a continuous map $g : Y \to B\Gamma$ and a representative of an $E$-orientation of $Y$ (as an $E$-homology class) gives a map $S(Y,g)$ from the sphere spectrum $S^{n+1}$ to $E \wedge \Sigma^{\infty} B^{\prime}_0 (B\Gamma)$.
For an $E$-oriented $B\Gamma$-bordism $(X, f)$ between $E$-oriented $B\Gamma$-manifolds $(Y_0, g_0),(Y_1,g_1)$, we obtain a homotopy $S(X,f)$ from $S(Y_0,g_0)\circ q_1$ to $S(Y_1,g_1) \circ q_0$ :
$$
\begin{tikzcd}
 & B^{\prime}_1(Y_0 \to X \leftarrow Y_1) \arrow[ld, "q_1"] \arrow[rd, "q_0"] &  \\
B^{\prime}_0 (Y_0)  \arrow[rd, "S(Y_0g_0)"] & \stackrel{S(X,f)}{\Longrightarrow} & B^{\prime}_0 (Y_1) \arrow[ld, "S(Y_1g_1)"] \\
 & B^{\prime}_0(B\Gamma) & 
\end{tikzcd}
$$
The above constructions might be sufficient to understand what are assigned to objects and morphisms from the TQFT's, however it is hard to check whether the assignments give a functor, i.e. whether commutative diagrams in the domain are sent to commutative diagrams in the target.
It is the reason that we use bundles over the 2-disk $D^2$ not over the 1-disk $D^1$ in Definition \ref{defn_2disk_B}.

Let us recall how the homotopy $h$ is obtained.
One might realize that the maps in the diagram (\ref{B_diamond_XY}) are nothing but the collapsing maps $\pi_u$ from $D^1 / \partial D^1$ to $U/\partial U \cong S^1$ where $U$ is a subspace of $D^1$ with a homeomorphism $u$ to the 1-disk $D^1$ if one focuses on their base spaces.
Then the homotopy $h$ is obtained from a homotopy between $\pi_u$ and $\pi_v$ where we consider another embedding $v : D^1 \to D^1$.
More generally, given a manifold $T^n$ and an embedding $u : D^n \to T^n$, we have such collapsing map $\pi_u : T^n \to D^n / \partial D^n \cong S^n$, and if two embeddings are connected by an isotopy $i$, then $i$ induces a homotopy between $\pi_u$ and $\pi_v$.

\section{Generalized (co)homology categorical group}
\label{Generalized (co)homology theory SCG}

In this section, we introduce a categorical group version of generalized (co)homology theory
It is defined as the fundamental groupoid of a space of morphisms between spectra whose 0-th homotopy set gives the generalized (co)homology group.
The abelian group structure on generalized (co)homology group is lifted to the groupoid as a symmetric categorical group structure.

In subsection \ref{preli_Generalized (co)homology theory SCG},  we introduce a simplicial set consisting of morphisms between spectra, and discuss some properties of its fundamental groupoid.
In subsection \ref{categorical cohomology group}, we define categorical generalized (co)homology group denoted by $\hat{E}_n (X)$ and $\hat{E}^n(X)$ for a pointed space $X$.

\subsection{Preliminaries}
\label{preli_Generalized (co)homology theory SCG}

We follow \cite{Adams}, \cite{Switzer}  with respect to the notion of spectra.
We say that $E=\{ E_n , s_n\}$ is a spectrum if $E_n$'s are pointed CW complexes and $s_n : \Sigma E_n \to E_{n+1}$'s are pointed CW-embeddings.
For two spectra $E,F$, a map is a family of celluar maps $f_n : E_n \to F_n$ between CW-complexes which intertwines the structure maps $s_n$'s.
For two spectra $E,F$, a morphism is an equivalence class of a pair $(f,E^\prime) : E \to F$ where $E^\prime \subset E$ is cofinal subspectrum and $f : E^\prime \to F$ is a map of spectra.
Here, $E^\prime \subset E$ is cofinal if there exists $k \in \mathbb{N}$ such that $\Sigma^k e \subset E^\prime_{n+k}$ for any cell $e$ of $E_n$.
Two pairs $(f,E^\prime ), (g , E^{\prime\prime} )$ are equivalent if $f |_{E^\prime \cap E^{\prime\prime} }= g|_{E^\prime \cap E^{\prime\prime}} $.
We denote by $Mor(E,F)_n$ the set of morphisms of degree $n$ from $E$ to $F$.

\begin{Defn}
We define a simplicial set $MOR (E,F)_n = X$ for two spectra $E,F$.
Let its $m$-skeleton $X_m$ be $Mor (E \wedge \triangle (m)^+ , F)_n$.
Here, $E\wedge \triangle (n)^+ = \{ E_n \wedge \triangle (n)^+ \}$ is given by the smash product of a spectrum and a pointed CW-complex.
We use the face maps and degeneracy maps between simplices $\triangle (n)$ to define the face maps and degeneracy maps between $X_m$'s.
\end{Defn}

For two subspectra $E^\prime, E^{\prime\prime}$ of $E$, if two morphisms $f^{\prime} : E^\prime \to F$, $f^{\prime\prime} : E^{\prime\prime} \to F$ coincide on $E^\prime \cap E^{\prime\prime}$, then they are glued into a unique morphism $f : E^\prime \cup E^{\prime\prime} \to F$.
Hence, we see that the simplicial set $MOR(E,F)_n$ satisfies the Kan condition \cite{Kan}, \cite{Kan2} so that its fundamental groupoid of $MOR(E,F)$ is well-defined.

Next, we lift the composition of morphisms to a simplicial map between $MOR(E,F)$'s :
\begin{Defn} 
We define a simplicial map,
\begin{align}
\label{GCH_cat_ver_composition_simplicial}
MOR(E,F) \times MOR(F,G) \to MOR(E,G) .
\end{align}
It suffices to define a map which is compatible with face maps and degeneracy maps,
\begin{align}
Mor(E\wedge (\triangle (n) ) ^+,F) \times Mor(F\wedge (\triangle (n) ) ^+,G) &\to Mor(E\wedge (\triangle (n) ) ^+,G) \\
(~f~,~g~) &\mapsto h
\end{align}
We define $h$ by taking compositions of the following maps where $f \in Mor(E\wedge (\triangle (n) ) ^+,F)$, $g, \in Mor(F\wedge (\triangle (n) ) ^+,G)$ :
\begin{align}
E\wedge (\triangle (n) ) ^+
&\stackrel{\Delta}{\to} E\wedge (\triangle (n) \times \triangle (n)) ^+ \\
&\stackrel{\cong}{\to} (E\wedge (\triangle (n))^+) \wedge (\triangle (n))^+ \\
&\stackrel{f}{\to} F \wedge (\triangle (n))^+ \\
&\stackrel{g}{\to} G .
\end{align}

\end{Defn}

Since the composition simplicial map defined above satisfies the associativity strictly, it follows that (\ref{GCH_cat_ver_composition_simplicial}) is natural with respect to $E,F,G$.
For example, if we write by $\ast$ the one-point simplicial set, then $f \in Mor (E,F)$ induces a simplicial map $\hat{f} : \ast \to MOR (E,F)$.
Then we obtain a simplicial map from (\ref{GCH_cat_ver_composition_simplicial}) :
\begin{align}
MOR(F,G) &\to \ast \times MOR(F,G) \\
&\stackrel{\hat{f}}{\to} MOR(E,F) \times MOR(F,G) \\
&\to MOR(E,G)
\end{align}
If we denote it by $f^\ast : MOR(F,G) \to MOR(E,G)$, then we have $(g\circ f)^\ast = f^\ast \circ g^\ast$ due to the associativity of the composition simplicial map.
Therefore we obtain the following proposition.

\begin{prop}
The composition simplicial map induces a functor between fundamental groupoids,
\begin{align}
\Pi_1 MOR( E,F) \times \Pi_1 MOR(F,G) \to \Pi_1 MOR(E,G) .
\end{align}
This composition functor satisfies the associativity strictly, has a strict unit, and it is natural with respect to $E,F,G$.
\end{prop}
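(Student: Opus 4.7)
The plan is to descend the composition simplicial map through the fundamental groupoid functor $\Pi_1$ and then verify each strictness claim by working at the simplicial level, where it is visible, before passing to groupoids.

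First I would record the basic fact that for Kan simplicial sets $X,Y$ the canonical projection maps induce an isomorphism of groupoids $\Pi_1(X\times Y)\cong \Pi_1(X)\times \Pi_1(Y)$, since in a Kan complex paths and homotopies can be chosen componentwise. The text preceding the proposition has already argued that $MOR(E,F)$ satisfies the Kan condition, by the gluing property of morphisms on unions of cofinal subspectra; the same argument applies to $MOR(F,G)$ and $MOR(E,G)$. Applying $\Pi_1$ to the composition simplicial map \eqref{GCH_cat_ver_composition_simplicial} and composing with the isomorphism above therefore produces a functor
\begin{align*}
\Pi_1 MOR(E,F)\times \Pi_1 MOR(F,G)\to \Pi_1 MOR(E,G),
\end{align*}
as required.

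Second, I would argue that the three strictness claims all follow because the composition is already strictly associative, strictly unital, and strictly natural at the level of the defining simplicial maps, and all three properties are preserved by the $2$-functor $\Pi_1$. Concretely, the recipe defining $h$ from $(f,g)$ uses only (i) the diagonal $\Delta:\triangle(n)^+\to(\triangle(n)\times\triangle(n))^+$, which is strictly coassociative, (ii) the interchange isomorphism $E\wedge(\triangle(n)\times\triangle(n))^+\cong (E\wedge\triangle(n)^+)\wedge\triangle(n)^+$, which we fix once and for all, and (iii) composition of morphisms of spectra (viewed as equivalence classes of maps from cofinal subspectra), which is strictly associative. Iterating the recipe on an ordered triple $(f,g,h)$ therefore gives the same morphism, simplex by simplex, regardless of parenthesisation, so the two simplicial maps $\bigl((g\circ h)\circ f\bigr)$ and $\bigl(g\circ(h\circ f)\bigr)$ are literally equal. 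For the unit, $\mathrm{id}_E\in Mor(E,E)_0$ defines a simplicial map $\ast\to MOR(E,E)$ whose composite with any $f$ recovers $f$ vertex by vertex, since all constituent maps are identities at the $0$-simplex.

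Third, for naturality in $E,F,G$ I would recall the precomposition/postcomposition simplicial maps $f^\ast$ and $g_\ast$ constructed in the paragraph before the proposition, and observe that the two simplicial squares expressing $(g\circ f)^\ast=f^\ast\circ g^\ast$ and the analogous postcomposition identity commute on the nose by the same strict associativity. Applying $\Pi_1$ then gives the desired equalities of functors, so the composition functor is natural in each of $E,F,G$ strictly.

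The main obstacle is the bookkeeping around the smash product, which on spectra is only associative and unital up to coherent natural equivalence in the sense of Adams–Switzer. I would therefore take care to fix, once and for all, the canonical identifications $(E\wedge K)\wedge L\cong E\wedge(K\wedge L)$ used implicitly in the definition of $h$, and to verify that with this choice all the relevant pentagons of coherence isomorphisms reduce, on the level of morphisms out of cofinal subspectra, to literal equalities rather than nontrivial automorphisms. Once this is in place, every commutative diagram we need is witnessed by an actual equality of simplicial maps, and passing to $\Pi_1$ gives the proposition without further work.
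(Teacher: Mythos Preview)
Your proposal is correct and follows essentially the same approach as the paper: the paper simply asserts that the composition simplicial map is strictly associative (with the naturality example $(g\circ f)^\ast=f^\ast\circ g^\ast$ spelled out just before the proposition) and states the proposition as an immediate consequence, without a separate proof environment. You have filled in the details the paper leaves implicit---the Kan condition, the product decomposition of $\Pi_1$, and the coherence bookkeeping for the smash product---which is more than the paper itself provides.
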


From now on, we discuss an additivity of $MOR (E,F)$ with respect to $E$.
Note that we have a bijection $Mor (E\vee E^\prime , F) \to Mor (E,F) \times Mor (E^\prime , F)$, which extends to a simplicial isomorphism $MOR (E\vee E^\prime , F) \to MOR (E,F) \times MOR (E^\prime , F)$.
Hence, we obtain the following proposition.

\begin{prop}
\label{first_variable_MOR_decomp}
Let $E,E^\prime,F$ be spectra.
Let us denote by $i : E \to E\vee E^\prime$, $i^\prime : E^\prime \to E \vee E^\prime$ the canonical inclusions.
They induce an isomorphism of groupoids :
\begin{align}
\label{spectra_wedge_isom}
\Pi_1 MOR (E \vee E^\prime , F) \to \Pi_1 MOR(E,F) \times \Pi_1 MOR(E^\prime ,F) .
\end{align}

\end{prop}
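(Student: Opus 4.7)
The plan is to reduce the statement to the simplicial isomorphism mentioned in the paragraph immediately preceding the proposition and then push it through the fundamental groupoid functor. The essential input is the fact that the smash product distributes over the wedge sum, namely $(E \vee E') \wedge \triangle(n)^+ \cong (E \wedge \triangle(n)^+) \vee (E' \wedge \triangle(n)^+)$, combined with the universal property $Mor(A \vee B, F) \cong Mor(A, F) \times Mor(B, F)$. Applied levelwise and checked to be compatible with face and degeneracy maps (both sides are induced by the simplicial operators on $\triangle(n)$), this produces a simplicial isomorphism
\begin{align}
\Phi : MOR(E \vee E', F) \xrightarrow{\cong} MOR(E, F) \times MOR(E', F) ,
\end{align}
whose inverse is the simplicial map induced by $(i, i')$ and the codiagonal.

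Next I would argue that the fundamental groupoid functor $\Pi_1$ preserves finite products of Kan simplicial sets. Both $MOR(E,F)$ and $MOR(E',F)$ satisfy the Kan condition by the gluing argument already used to justify that $\Pi_1 MOR(E,F)$ is well defined, and a product of two Kan complexes is again Kan. On such complexes, the identifications $\pi_0(X \times Y) = \pi_0(X) \times \pi_0(Y)$ and $\pi_1(X \times Y, (x,y)) = \pi_1(X,x) \times \pi_1(Y,y)$ are classical, and they assemble into a canonical isomorphism of groupoids $\Pi_1(X \times Y) \xrightarrow{\cong} \Pi_1 X \times \Pi_1 Y$. Applying $\Pi_1$ to $\Phi$ and composing with this canonical isomorphism therefore yields the desired isomorphism of groupoids.

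Finally I would identify the resulting map with the one described in the statement. The inclusions $i, i'$ induce simplicial maps $i^\ast, (i')^\ast$ on $MOR$, and by naturality of $\Phi$ the components of $\Phi$ coincide with $i^\ast$ and $(i')^\ast$; equivalently, the inverse of $\Phi$ is the map $(f, f') \mapsto f \sqcup f'$, which on $\Pi_1$ realizes the claimed isomorphism (\ref{spectra_wedge_isom}).

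The only step that requires any care is checking the Kan condition for $MOR(E,F)$ and transferring the gluing property of morphisms of spectra to horns in $\triangle(n)^+$; this is precisely the observation made just before the definition of the composition simplicial map, so one can invoke it directly rather than reprove it. I do not anticipate a genuine obstacle: once the simplicial isomorphism is in hand, the content of the proposition is the formal fact that $\Pi_1$ commutes with products of Kan complexes.
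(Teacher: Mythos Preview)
Your proposal is correct and follows essentially the same approach as the paper: the paper simply observes (in the paragraph immediately preceding the proposition) that the bijection $Mor(E\vee E',F)\to Mor(E,F)\times Mor(E',F)$ extends to a simplicial isomorphism $MOR(E\vee E',F)\to MOR(E,F)\times MOR(E',F)$ and declares the proposition an immediate consequence. You have merely made explicit the two implicit steps---distributivity of $(-)\wedge\triangle(n)^+$ over the wedge, and preservation of products by $\Pi_1$ on Kan complexes---which the paper leaves to the reader.
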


By definition, $\pi_0 (MOR(E,F))$ is the homotopy set of morphisms from $E$ to $F$.
It is well-known that the set of homotopy set of morphisms of spectra has an abelian group structure.
It is explained as follows.
Note that we have an equivalence $MOR (E,F) \simeq MOR(S\wedge E , F)$ where we denote by $S$ the sphere spectrum.
The sphere spectrum has a cogroup structure in the homotopy category of spectra with the wedge product as its monoidal structure :
We have a pinch map $S^1 \to S^1 \vee S^1$ which extends to a morphism of spectra $S\to S\vee S$.
Since $S^2 \to S^2 \vee S^2$ gives an abelian cogroup in the homotopy category of pointed spaces with the wedge product as its symmetric monoidal structure, we see that $S \to S \vee S$gives an abelian cogroup in the homotopy category of spectra.
Therefore the set $\pi_0 (MOR(S\wedge E , F))$ has an abelian group structure since we have $MOR((S\vee S)\wedge E , F) \cong MOR (S\wedge E, F) \times MOR (S\wedge E, F)$.
In a similar way, $\Pi_1 (S\wedge E , F)$ inherits a symmetric categorical group structure.

\begin{prop}
\label{E_vee_oplus}
By Proposition \ref{first_variable_MOR_decomp}, we have a symmetric monoidal isomorphism :
\begin{align}
\Pi_1 MOR (S \wedge ( E \vee E^\prime) , F) \to \Pi_1 MOR(S \wedge E,F) \oplus \Pi_1 MOR(S \wedge E,F) \notag
\end{align}
\end{prop}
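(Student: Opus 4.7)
The plan is to upgrade the groupoid isomorphism supplied by Proposition \ref{first_variable_MOR_decomp} to a symmetric monoidal one, by tracing through how the cogroup structure on the sphere spectrum $S$ induces the symmetric categorical group structure on each side.

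First I would recall that the symmetric categorical group structure on $\Pi_1 MOR(S \wedge X, F)$ is defined, for any spectrum $X$, by precomposition with the pinch $\nabla : S \to S \vee S$. Concretely, the sum of $f, g : S \wedge X \to F$ is the composite
\begin{align*}
S \wedge X \xrightarrow{\nabla \wedge \mathrm{id}_X} (S \vee S) \wedge X \xrightarrow{\cong} (S \wedge X) \vee (S \wedge X) \xrightarrow{f \vee g} F,
\end{align*}
and at the categorical level this operation is lifted to a functor on $\Pi_1 MOR(S\wedge X, F)$. The key structural input is that the smash product distributes over wedges: there is a canonical natural isomorphism $S \wedge (E \vee E') \cong (S \wedge E) \vee (S \wedge E')$, which induces an isomorphism of simplicial sets $MOR(S \wedge (E \vee E'), F) \cong MOR((S \wedge E) \vee (S \wedge E'), F)$, and then by Proposition \ref{first_variable_MOR_decomp} a further isomorphism of groupoids with $\Pi_1 MOR(S \wedge E, F) \times \Pi_1 MOR(S \wedge E', F)$.

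Next I would verify that this composite isomorphism intertwines the addition functors. Given morphisms $(f_1, f_1')$ and $(f_2, f_2')$ on the product side, the componentwise sum is $(f_1 + f_2, f_1' + f_2')$, each summand formed via $\nabla$ smashed with $\mathrm{id}_E$ or $\mathrm{id}_{E'}$. On the wedge side, the sum is formed via $\nabla \wedge \mathrm{id}_{E \vee E'}$. The compatibility then reduces to naturality of $\nabla$ together with the elementary identity
\begin{align*}
(\nabla \wedge \mathrm{id}_{E \vee E'}) = (\nabla \wedge \mathrm{id}_E) \vee (\nabla \wedge \mathrm{id}_{E'})
\end{align*}
under the distributivity iso, which is a direct simplicial check once one unwinds the Kan fundamental groupoid. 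The unit of the monoidal structure (the zero morphism) and the braiding (induced by the cocommutativity of $\nabla$ up to canonical homotopy) are treated in the same way using naturality of these data in the spectrum variable.

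The main obstacle, insofar as there is one, is purely bookkeeping: one must confirm that the coherence data (associator, unit, braiding $2$-cells) of the symmetric categorical group structures go across under the isomorphism, not merely the underlying addition on objects. This amounts to verifying the commutativity of a handful of diagrams in $\Pi_1 MOR$, all of which come from genuine equalities in the simplicial set $MOR$ (since smash distributes over wedge strictly in the Adams--Switzer formalism, and the pinch map is compatible with the wedge coprojections on the nose). Thus no nontrivial homotopy needs to be exhibited; everything follows from strict naturality of the relevant constructions, and Proposition \ref{E_vee_oplus} follows.
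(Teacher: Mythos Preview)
Your proposal is correct and follows the same line as the paper: the paper simply records Proposition~\ref{E_vee_oplus} as an immediate consequence of Proposition~\ref{first_variable_MOR_decomp} (together with the distributivity $S\wedge(E\vee E')\cong (S\wedge E)\vee(S\wedge E')$) and gives no further argument. What you have written is exactly the bookkeeping the paper leaves implicit---checking that the pinch-induced addition, unit, and braiding pass through the wedge decomposition by strict naturality---so there is no substantive difference in approach, only in level of detail.
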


Let $E,F,F^\prime$ be spectra.
Denote the canonical inclusions by $i : F \to F \vee F^\prime$, $i^\prime : F^\prime \to F \vee F^\prime$.
They induce a functor $f$ by
\begin{align}
\Pi_1 MOR (S \wedge E, F) \oplus \Pi_1 MOR (S \wedge E, F^\prime) &\to \Pi_1 MOR (S \wedge E, F \vee F^\prime) \notag \\
(a,b) &\mapsto i_\ast (a) \oplus i^\prime_\ast (b) \notag 
\end{align}
If $p : F \vee F^\prime \to F$ and $p^\prime : F \vee F^\prime \to F^\prime$ denotes the projections, we obtain a functor $g$, 
\begin{align}
\Pi_1 MOR (S \wedge E, F \vee F^\prime) &\to \Pi_1 MOR (S \wedge E, F) \oplus \Pi_1 MOR (S \wedge E, F^\prime) \notag \\
a &\mapsto (p_\ast (a) , p^\prime_\ast (a)) \notag
\end{align}

\begin{prop}
\label{F_vee_oplus}
The functors $f,g$ are lifted to an adjoint equivalence of symmetric categorical groups with canonical inverses.
\end{prop}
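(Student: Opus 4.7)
The plan is to verify three items in sequence: (i) $f$ and $g$ can be upgraded to symmetric monoidal functors, (ii) there exist symmetric monoidal natural isomorphisms $\eta : \mathrm{id} \xrightarrow{\simeq} g\circ f$ and $\varepsilon : f\circ g \xrightarrow{\simeq} \mathrm{id}$, and (iii) $(\eta,\varepsilon)$ satisfies the triangle identities. The basic input throughout is that pushforward along any morphism of spectra is a symmetric monoidal functor between the corresponding categorical groups: the operation $\oplus$ on $\Pi_1 MOR(S\wedge E,-)$ is built from the pinch $\mu : S \to S\vee S$ and the coproduct property of $\vee$, both manifestly natural in the second slot.

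The functor $g=(p_\ast,p'_\ast)$ is strictly monoidal since the direct sum on the target is computed coordinatewise. For $f$, the monoidal constraint
\begin{align}
f(a\oplus a',b\oplus b') &= i_\ast(a\oplus a')\oplus i'_\ast(b\oplus b') \notag \\
&\cong (i_\ast a\oplus i_\ast a')\oplus(i'_\ast b\oplus i'_\ast b') \notag \\
&\cong (i_\ast a\oplus i'_\ast b)\oplus(i_\ast a'\oplus i'_\ast b') = f(a,b)\oplus f(a',b') \notag
\end{align}
combines the monoidality of $i_\ast, i'_\ast$ with the symmetry and associativity of $\oplus$ on the target. For $\eta$, unpacking gives $g\circ f(a,b)=(p_\ast i_\ast a\oplus p_\ast i'_\ast b,\; p'_\ast i_\ast a\oplus p'_\ast i'_\ast b)$. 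Since $p\circ i=\mathrm{id}_F$ and $p'\circ i'=\mathrm{id}_{F'}$ strictly, whereas $p\circ i'$ and $p'\circ i$ factor through the zero spectrum, the composites $p_\ast i'_\ast$ and $p'_\ast i_\ast$ canonically collapse to the monoidal unit; the unit isomorphisms of the target then supply the natural iso $(a,b)\xrightarrow{\simeq}g\circ f(a,b)$.

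The central step is constructing $\varepsilon$. Given $c\in\Pi_1 MOR(S\wedge E,F\vee F')$, unfolding the cogroup definition of $\oplus$ yields
\begin{align}
f\circ g(c)=i_\ast p_\ast c\oplus i'_\ast p'_\ast c=\{ipc,i'p'c\}\circ(\mu\wedge\mathrm{id}_E)=\{ip,i'p'\}\circ \mu_{F\vee F'}\circ c, \notag
\end{align}
where $\{-,-\}$ denotes the map out of a wedge given by the coproduct property, $\mu_{F\vee F'}$ is the cogroup comultiplication on $F\vee F'$ induced by $\mu$, and the last equality is naturality of $\mu$. It therefore suffices to produce a 1-simplex from $\{ip,i'p'\}\circ\mu_{F\vee F'}$ to $\mathrm{id}_{F\vee F'}$ in $MOR(F\vee F',F\vee F')$. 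On $\pi_0$ this identity holds: precomposing with $i$ or $i'$ and applying $pi=\mathrm{id},\ p'i'=\mathrm{id},\ pi'\simeq 0,\ p'i\simeq 0$ shows both sides agree on each wedge summand, so Proposition \ref{first_variable_MOR_decomp} forces them to be homotopic. By the Kan property of $MOR$, one such homotopy promotes to an explicit 1-simplex, which we fix once and for all.

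The main obstacle is ensuring naturality of $\varepsilon$ in $c$ and compatibility with $\oplus$. Both come for free from the construction: since the chosen 1-simplex lives in $MOR(F\vee F',F\vee F')$ independently of $c$, $\varepsilon_c$ is obtained by whiskering it with $c$, and the naturality of $\mu$ together with the functoriality of pushforward make $\varepsilon$ a symmetric monoidal natural transformation. With $\eta$ and $\varepsilon$ in place, the triangle identities $(\varepsilon f)(f\eta)=\mathrm{id}_f$ and $(g\varepsilon)(\eta g)=\mathrm{id}_g$ reduce, after cancellation, to the strict identities $pi=\mathrm{id},\ p'i'=\mathrm{id}$ combined with the coherence of the target symmetric categorical group structure, and hold automatically. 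This yields the claimed adjoint equivalence with canonical inverses.
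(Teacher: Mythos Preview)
Your overall strategy differs from the paper's: you construct $\varepsilon$ explicitly by choosing a homotopy $h$ in $MOR(F\vee F',F\vee F')$ between $\{ip,i'p'\}\circ\mu_{F\vee F'}$ and $\mathrm{id}$, whereas the paper first builds $\eta$ exactly as you do, then observes that $f$ is an equivalence of groupoids (since it induces isomorphisms on $\pi_0$ and $\pi_1$), and finally invokes the general fact that any equivalence together with a chosen unit $\eta$ admits a \emph{unique} counit $\epsilon$ making $(f,g,\eta,\epsilon)$ an adjoint equivalence. The paper then transports the symmetric monoidal structure to $f$ via this adjunction rather than writing down the constraint by hand.

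The genuine gap in your argument is the verification of the triangle identities. Your $\varepsilon$ depends on a choice of $1$-simplex $h$, and you select ``one such homotopy'' by the Kan property. But $h$ is only determined up to an element of $\pi_1\,\Pi_1 MOR(F\vee F',F\vee F')$, which is nontrivial in general; whiskering by $c$ preserves that ambiguity. The triangle identities $(\varepsilon f)(f\eta)=\mathrm{id}_f$ and $(g\varepsilon)(\eta g)=\mathrm{id}_g$ pin down $\varepsilon$ uniquely once $\eta$ is fixed, so an arbitrary choice of $h$ will typically fail one of them. Your final sentence, that after cancellation they ``reduce to the strict identities $pi=\mathrm{id}$, $p'i'=\mathrm{id}$ combined with coherence,'' does not follow: those strict identities were already consumed in building $\eta$, and what remains is precisely the compatibility of your particular $h$ with the unit constraints used in $\eta$, which you have not checked. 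The paper's abstract argument is designed exactly to sidestep this computation. If you want to keep your explicit approach, you must either compute the triangle composites directly and show they are identities, or else replace your chosen $h$ by the one forced by the adjunction formula $\varepsilon = (f\eta^{-1}g)\circ(\text{some iso})$ and then your explicit description becomes a consequence rather than a definition.
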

\begin{proof}
Since $p \circ i = id_{F}$, $p^\prime \circ i^\prime = id_{F^\prime}$ and $p \circ i^\prime$ and $p^\prime \circ i$ are collapsing morphisms, $g(f( a,b)) = f(i_\ast (a) \oplus i^\prime_\ast (b)) = (p_\ast (i_\ast (a) \oplus i^\prime_\ast (b)) , p^\prime_\ast (i_\ast (a) \oplus i^\prime_\ast (b))) = (a \oplus (p\circ i^\prime)_\ast (b) , (p^\prime \circ i)_\ast (a) \oplus b)\cong (a, b)$.
Hence we obtain a natural isomorphism $\eta : g\circ f \cong id$ which is natural with respect to $E,F,F^\prime$.

On the one hand, $f$ induces an equivalence of groupoids since its $\pi_0 , \pi_1$ induce isomorphisms 
Therefore, there exists a unique natural isomorphism $\epsilon : id \cong f \circ g$ such that $f,g, \eta , \epsilon$ gives an adjoint equivalence of groupoids.

The functor $g$ is obviously a (strict) symmetric monoidal functor preserving canonical inverses due to definitions.
Although it is not that obvious for the functor $f$, the functor $f$ is enhanced naturally to a symmetric monoidal functor preserving canonical inverses using the adjoint equivalence $f,g,\eta,\epsilon$.
Then $f,g,\epsilon,\eta$ give an adjoint equivalence of symmetric categorical groups with canonical inverses.
\end{proof}

\subsection{Construction}
\label{categorical cohomology group}

\begin{Defn}
Let $E$ be a spectrum.
For a pointed space $X$, we define a symmetric categorical group $\hat{E}^n (X)$ and $\hat{E}_n (X)$ by
\begin{align}
\hat{E}^n (X) &\stackrel{\mathrm{def.}}{=} \Pi_1 MOR (S \wedge \Sigma^{\infty} X , E)_{-n} , \\
\hat{E}_n (X) &\stackrel{\mathrm{def.}}{=} \Pi_1 MOR (S , E\wedge ( S \wedge \Sigma^{\infty} X ) )_n .
\end{align}
\end{Defn}

By definition, we have following isomorphisms of abelian groups.
\begin{align}
\pi_0 (\hat{E}^n (X)) \cong \tilde{E}^n (X) ,\\
\pi_0 (\hat{E}_n (X)) \cong \tilde{E}_n (X) .
\end{align}
Here, the right hand side denotes the reduced $E$-(co)homology groups.
Moreover, for the fundamental groups of groupoids $\hat{E}^n (X) , \hat{E}_n (X) $, we have the following proposition.

\begin{prop}
Let $E$ be a spectrum and $X$ be a pointed space.
We have following isomorphisms of abelian groups.
\begin{align}
\pi_1 (\hat{E}^n (X)) \cong \tilde{E}^{n-1} (X) , \\
\pi_1 (\hat{E}_n (X)) \cong \tilde{E}_{n+1} (X) .
\end{align}
\end{prop}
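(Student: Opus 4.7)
The plan is to reduce the proposition to a general delooping formula $\pi_1 MOR(A,B)_m \cong [A \wedge S^1, B]_m$ for spectra $A, B$, and then to apply it with the standard suspension isomorphisms for $E$-(co)homology. Since $MOR(A,B)_m$ was argued to be Kan just after its definition, its $\pi_1$ at the zero morphism coincides with $\pi_1$ of the fundamental groupoid $\Pi_1 MOR(A,B)_m$ at the basepoint, so the problem becomes entirely simplicial.

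First I would set up the identification $\pi_1 MOR(A,B)_m \cong [A \wedge S^1, B]_m$. A 1-simplex of $MOR(A,B)_m$ is a degree-$m$ morphism $A \wedge \Delta(1)^+ \to B$; it represents a loop at the zero morphism precisely when its restrictions to $A \wedge \{0\}^+$ and $A \wedge \{1\}^+$ vanish. Such a morphism factors uniquely through the collapse $A \wedge (\Delta(1)^+/\partial\Delta(1)^+) \cong A \wedge S^1$. A 2-simplex witnessing simplicial equivalence of two such loops descends, under the same collapse of a pair of opposite edges, to a morphism homotopy $A \wedge S^1 \wedge \Delta(1)^+ \to B$. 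This yields a bijection between $\pi_1 MOR(A,B)_m$ and stable homotopy classes of degree-$m$ morphisms $A \wedge S^1 \to B$. The group structure on the categorical-group $\pi_1$ matches the usual addition on the right-hand side, because both come from the sphere-spectrum cogroup structure already exploited in Proposition \ref{E_vee_oplus}.

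Next I would apply the formula to the two cases. For cohomology, taking $A = S \wedge \Sigma^\infty X$, $B = E$, $m = -n$, and using $S \wedge \Sigma^\infty X \simeq \Sigma^\infty X$, $\Sigma^\infty X \wedge S^1 \simeq \Sigma^\infty \Sigma X$, together with the $\pi_0$-identification already established and the suspension isomorphism $\tilde{E}^n(\Sigma X) \cong \tilde{E}^{n-1}(X)$:
\begin{align*}
\pi_1(\hat{E}^n(X)) \cong [\Sigma^\infty \Sigma X, E]_{-n} \cong \tilde{E}^n(\Sigma X) \cong \tilde{E}^{n-1}(X).
\end{align*}
For homology, taking $A = S$, $B = E \wedge S \wedge \Sigma^\infty X \simeq E \wedge \Sigma^\infty X$, $m = n$, and using the analogous identification $[S \wedge S^1, F]_n \cong \pi_{n+1}(F)$:
\begin{align*}
\pi_1(\hat{E}_n(X)) \cong \pi_{n+1}(E \wedge \Sigma^\infty X) \cong \tilde{E}_{n+1}(X).
\end{align*}

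The hard part will be the delooping step itself: confirming that loops with both endpoints collapsed really correspond to morphisms out of the cofiber $A \wedge S^1$ and that simplicial 2-cell equivalence descends to ordinary morphism homotopy. Once this is in place the rest is a direct substitution. A small secondary check is that the abelian group law on $\pi_1$ coming from the symmetric categorical group structure agrees with the wedge-induced sum on $[A \wedge S^1, B]_m$; this again reduces to the same sphere-cogroup manipulation used in the preceding propositions and so is not a genuinely new obstacle.
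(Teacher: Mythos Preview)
Your proposal is correct and follows essentially the same route as the paper: the paper's sketch is exactly the delooping identification $\pi_1(MOR(A,B)_m) \cong [A \wedge S^1, B]_m$ followed by a degree shift, applied to $A = S \wedge \Sigma^\infty X$, $B = E$. The only cosmetic difference is that the paper passes directly from $Mor(S \wedge \Sigma^\infty X \wedge S^1, E)_{-n}/\text{htpy}$ to $Mor(S \wedge \Sigma^\infty X, E)_{-n+1}/\text{htpy}$ via the degree shift $-\wedge S^1$, whereas you phrase the same step as the suspension isomorphism $\tilde{E}^n(\Sigma X) \cong \tilde{E}^{n-1}(X)$; your treatment of the homology case and of the compatibility of group laws simply fills in details the paper leaves implicit.
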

\begin{proof}
We sketch the proof for the first claim.
\begin{align}
\pi_1 (\hat{E}^n (X)) &\cong \pi_1 (MOR (S \wedge \Sigma^{\infty}X , E)_{-n}) \\
&\cong  Mor (S\wedge \Sigma^{\infty}X \wedge S^1 , E)_{-n} / homotopy \\
&\cong Mor (S\wedge \Sigma^{\infty}X , E)_{-n+1} / homotopy \\
&\cong \tilde{E}^{n-1} (X) 
\end{align}
\end{proof}

The symmetric categorical group version of generalized (co)homology theory defined above is motivated by \cite{SV}, \cite{RMV}.
Sharma-Voronov used a symmetric categorical group version of ordinary (co)homology theory to construct the Dijkgraaf-Witten theory.
Their categorical (co)homology group is based on \cite{RMV}, i.e. it is induced from a chain complex of categorical groups.
Then a universal element $u \in H^n (HM_n; M[0])$ induces an equivalence of symmetric monoidal categories :
\begin{align}
\hat{HM}^n (X ) = \Pi_1 MOR(S\wedge \Sigma^{\infty}X , HM)_n \to H^n (X ; M[0] ).
\end{align}
Here $HM$ be the Eilenberg-MacLane spectrum associated with an abelian group $M$, and $H^n (X; M[0])$ is the categorical group described in Sharma-Voronov \cite{SV}.
$M[0]$ is the discrete symmetric categorical group induced by the abelian group $M$.

\begin{prop}
\label{first_wedge_EE}
The inclusions $X \to X \vee X^\prime$ and $X \to X \vee X^\prime$ induce the following symmetric monoidal isomorphism :
\begin{align}
\hat{E}^n (X \vee X^\prime ) \to \hat{E}^n (X) \oplus \hat{E}^n (X^\prime ) .
\end{align}
\end{prop}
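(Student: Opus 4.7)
The plan is to deduce this directly from Proposition \ref{E_vee_oplus} after using that the suspension spectrum functor preserves wedge sums. By definition we have
\begin{align}
\hat{E}^n(X \vee X') = \Pi_1 MOR(S \wedge \Sigma^{\infty}(X \vee X'), E)_{-n},
\end{align}
so my strategy is to identify $\Sigma^{\infty}(X \vee X')$ with $\Sigma^{\infty} X \vee \Sigma^{\infty} X'$ and then invoke the splitting already established for the first variable.

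First I would observe that since $\Sigma^{\infty}$ is the smash product with the sphere spectrum and smash distributes over wedge levelwise, the inclusions $X \hookrightarrow X \vee X'$ and $X' \hookrightarrow X \vee X'$ induce a canonical isomorphism of spectra $\Sigma^{\infty}(X \vee X') \cong \Sigma^{\infty} X \vee \Sigma^{\infty} X'$. Substituting this into the definition of $\hat{E}^n$ and setting $F = E$ (with the degree shift $-n$ absorbed into the grading on $MOR$), Proposition \ref{E_vee_oplus} applied to the spectra $\Sigma^{\infty}X, \Sigma^{\infty}X'$ produces a symmetric monoidal isomorphism
\begin{align}
\Pi_1 MOR(S \wedge \Sigma^{\infty}(X \vee X'), E)_{-n} \to \Pi_1 MOR(S \wedge \Sigma^{\infty}X, E)_{-n} \oplus \Pi_1 MOR(S \wedge \Sigma^{\infty}X', E)_{-n},
\end{align}
which is precisely the claimed morphism $\hat{E}^n(X \vee X') \to \hat{E}^n(X) \oplus \hat{E}^n(X')$.

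The remaining verification is that the map coming out of Proposition \ref{E_vee_oplus} is indeed the one induced by the space-level inclusions, i.e.\ by contravariant functoriality of $\hat{E}^n$ in $X$. This is immediate from Proposition \ref{first_variable_MOR_decomp}, since the splitting there is defined by pullback along the canonical spectrum-level inclusions $i : \Sigma^{\infty}X \to \Sigma^{\infty}X \vee \Sigma^{\infty}X'$ and $i' : \Sigma^{\infty}X' \to \Sigma^{\infty}X \vee \Sigma^{\infty}X'$, and these correspond via $\Sigma^{\infty}$ exactly to the inclusions $X \hookrightarrow X \vee X'$ and $X' \hookrightarrow X \vee X'$.

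I do not expect a serious obstacle here: the content of the proposition is essentially a reformulation of Proposition \ref{E_vee_oplus} in terms of pointed spaces rather than spectra. The only thing worth stating carefully is the compatibility of the symmetric monoidal structures at each intermediate step, but these are inherited from the cogroup structure on $S$ used in Proposition \ref{E_vee_oplus} and are preserved by $\Sigma^{\infty}$ and by $S \wedge -$. The statement in the proposition as written has a typographical issue (both inclusions are listed as $X \to X \vee X'$), which I would correct to $X \to X \vee X'$ and $X' \to X \vee X'$ in the proof.
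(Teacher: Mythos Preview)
Your proposal is correct and follows exactly the same approach as the paper, which simply writes ``It follows from Proposition \ref{E_vee_oplus}.'' Your added remarks about $\Sigma^{\infty}$ preserving wedges and the compatibility with the space-level inclusions just make explicit what the paper leaves implicit.
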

\begin{proof}
It follows from Proposition \ref{E_vee_oplus}.
\end{proof}

\begin{prop}
\label{second_wedge_EE}
We have a symmetric monoidal adjoint equivalence which is natural with respect to $X,Y,Y^\prime$ :
\begin{align}
 \hat{E}_n (X \vee X^\prime ) \to \hat{E}_n (X) \oplus \hat{E}_n (X^\prime ) . 
\end{align}
\end{prop}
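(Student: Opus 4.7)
The plan is to reduce this statement directly to Proposition~\ref{F_vee_oplus}. The first step is to distribute $E \wedge S \wedge \Sigma^{\infty}(-)$ over the wedge sum: since the suspension spectrum functor and the smash product with a fixed spectrum both preserve wedges, we obtain the identification
\begin{align*}
E \wedge S \wedge \Sigma^{\infty}(X \vee X') \;\cong\; (E \wedge S \wedge \Sigma^{\infty} X) \;\vee\; (E \wedge S \wedge \Sigma^{\infty} X') .
\end{align*}
Writing $F := E \wedge S \wedge \Sigma^{\infty} X$ and $F' := E \wedge S \wedge \Sigma^{\infty} X'$, this reduces the claim to exhibiting a symmetric monoidal adjoint equivalence
\begin{align*}
\Pi_1 MOR(S, F \vee F')_n \;\longrightarrow\; \Pi_1 MOR(S, F)_n \oplus \Pi_1 MOR(S, F')_n ,
\end{align*}
induced by the wedge projections and natural in $F, F'$.

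Next, I would observe that the argument establishing Proposition~\ref{F_vee_oplus} for $MOR(S \wedge E, -)$ carries over verbatim to $MOR(S, -)$: the only inputs that proof uses are (i) the cogroup structure on $S$ that equips the fundamental groupoid with a symmetric categorical group structure, (ii) the strict identities $p \circ i = \mathrm{id}_F$ and $p' \circ i' = \mathrm{id}_{F'}$, and (iii) the fact that the cross composites $p \circ i'$ and $p' \circ i$ are collapsing morphisms, hence equivalent to the unit in the categorical group. Accordingly, define $g : a \mapsto (p_\ast a, p'_\ast a)$ using the wedge projections, and $f : (a,b) \mapsto i_\ast a \oplus i'_\ast b$ using the inclusions. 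The computation $g \circ f \cong \mathrm{id}$ is the same direct check as in Proposition~\ref{F_vee_oplus}. Together with the observation that $f$ induces isomorphisms on $\pi_0$ and $\pi_1$ — the former by the reduced $E$-homology wedge axiom and the latter by the same axiom one degree up — one extracts a canonical adjoint partner $\epsilon : \mathrm{id} \Rightarrow f \circ g$, upgrading $(f,g)$ to an adjoint equivalence of groupoids. The symmetric monoidal structure on $f$ is then forced by the adjunction, exactly as in the closing paragraph of the proof of Proposition~\ref{F_vee_oplus}.

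Naturality in $X, X'$ is automatic, since every piece of the construction (the wedge identification of $\Sigma^{\infty}$, the smash product, the inclusions, the projections) is functorial. I do not anticipate a serious obstacle: the one point worth pausing over is the compatibility of the cogroup structure on $S$ — which provides the abelian structure on the categorical group — with the wedge decomposition in the target. But the cogroup acts in the first variable of $MOR$ while the wedge decomposition occurs in the second variable, so the two structures commute strictly, and no additional coherence data need be verified.
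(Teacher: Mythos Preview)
Your proposal is correct and matches the paper's approach: the paper's proof is the single line ``It follows from Proposition~\ref{F_vee_oplus},'' and what you have written is precisely the unpacking of that sentence---distribute $E \wedge S \wedge \Sigma^{\infty}(-)$ over the wedge and then invoke Proposition~\ref{F_vee_oplus} (or, equivalently, rerun its proof with the source spectrum equal to $S$). No additional ideas are needed.
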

\begin{proof}
It follows from Proposition \ref{F_vee_oplus}.
\end{proof}

Consider a pairing of spectra $\mu : E \wedge F \to G \wedge S$ where $E,F,G$ are spectra.
The pairing $\mu$ induces a pairing of symmetric categorical groups $\hat{F}^n (X) \times \hat{E}_m (X) \to \hat{G}_{m-n} (S^0)$ as a functor.
It is constructed by composing the following simplicial maps.
\begin{align}
MOR( S \wedge \Sigma^{\infty} X , F)_{-n} \times MOR( S , E\wedge ( S \wedge \Sigma^{\infty} X) )_m 
&\to
MOR (S , E \wedge F)_{m-n} \\
&\to 
MOR (S, G \wedge S)_{m-n} = \hat{G}_{m-n} (S^0)
\end{align}

\begin{remark}
\label{rel2exactsequ_remark}
For a pointed continuous map $f : X \to Y$, there is an associated long cofiber sequence $X \to Y \to C_f \to \Sigma X \to \Sigma Y \to \Sigma C_f \to \cdots$, which is called Puppe sequence.
If we applying generalized (co)homology theory to this sequence, then we obtain a long exact sequence which may be used to prove well-known long exact sequences.

On the one hand, there is lifted versions of long exact sequences given by Theroem 4.2 \cite{RMV}.
They are called 2-exactness and relative 2-exactness where 2-exactness implies relative 2-exactness.
We have a relative 2-exact sequence of our categorical (co)homology groups associated with Puppe sequence.
Let $f : X \to Y$ be a based map.
The long cofiber sequence $X \to Y \to C_f \to \Sigma X \to \Sigma Y \to \Sigma C_f \to \cdots$ associated with $f$ induces a relative 2-exact sequence : 
$$
\begin{tikzcd}
\hat{E}_n (X) \ar[r] \ar[rr, bend left, ""{name =A}, "0"] & \hat{E}_n  (Y)  \ar[r] \ar[rr, bend right, ""{name=B}, "0"'] \ar[to = A, Rightarrow] & \hat{E}_n (C_f) \ar[r]  \ar[rr, bend left, ""{name=C}, "0"] \ar[to = B , Rightarrow] & \hat{E}_n  (\Sigma X) \ar[r] \ar[to = C , Rightarrow] & \cdots \\
\end{tikzcd}
$$
$$
\begin{tikzcd}
\hat{E}^n (X)
\ar[rr, bend left, leftarrow, ""{name =A}, "0"]
&
\hat{E}^n (Y)
\ar[l]
\ar[to = A ,Rightarrow]
\ar[rr, bend right, leftarrow, ""{name =B}, "0"']
&
\hat{E}^n (C_f)
\ar[l]
\ar[to = B ,Rightarrow]
\ar[rr, bend left, leftarrow, ""{name =C}, "0"]
&
\hat{E}^n (\Sigma X)
\ar[to = C ,Rightarrow]
\ar[l]
&
\cdots
\ar[l]
\end{tikzcd}
$$
Its proof essentially comes from the (usual) exact sequence of generalized (co)homology groups.
\end{remark}


\section{Classical theory from generalized (co)homology}
\label{Classical theory from generalized (co)homology}

In this section, we construct a Lagrangian classical field theory $\hat{S}_{\hat{\varphi},\hat{\alpha}}$ for $E$-oriented manifolds as a symmetric monoidal functor where $E$ is a ring spectrum.
It is possible to regard  $\hat{S}_{\hat{\varphi},\hat{\alpha}}$ as an extension of the invariant $S_{\varphi, \alpha}$ of $B\Gamma$-manifolds in Definition \ref{CFT_GCH_Salpha_defn}.
The construction is, in fact, applied to arbitrary space instead of $B\Gamma$.

\begin{Defn}
Consider a diagram of spaces $R$,
\begin{align}
R_0 \to R_{01} \leftarrow R_1 .
\end{align}
Such a diagram is called a cospan, but we call it as a 1-diagram of spaces in this paper.
We write by $\partial_0 (R) \stackrel{\mathrm{def.}}{=} R_1$ and $\partial_1 (R) \stackrel{\mathrm{def.}}{=} R_0$.

Let us consider the following commutative diagram of spaces $R$,
$$
\begin{tikzcd}
R_{0} \arrow[r] \arrow[d] & R_{01} \arrow[d] & R_{1} \arrow[l] \arrow[d] \\
R_{02} \arrow[r] & R_{012} & R_{12} \arrow[l] \\
 & R_{2} \arrow[lu] \arrow[ru] & 
\end{tikzcd}
$$
Let us call such a diagram as a 2-diagram of spaces.
We define 1-diagram $\partial_k (R)$ for $k=0,1,2$ as follows.
\begin{itemize}
\item
$\partial_0 (R) \stackrel{\mathrm{def.}}{=} R_1 \to R_{12} \leftarrow R_2$ .
\item
$\partial_1 (R) \stackrel{\mathrm{def.}}{=} R_0 \to R_{02} \leftarrow R_2$ .
\item
$\partial_2 (R) \stackrel{\mathrm{def.}}{=} R_0 \to R_{01} \leftarrow R_1$ .
\end{itemize}

We call a diagram consisting of one space by 0-diagram.
For a 0-diagram $R$, we define $A_0 (R) \stackrel{\mathrm{def.}}{=} D^2 \times R$.

Let $R = R_0 \stackrel{i}{\to} R_{01} \stackrel{j}{\leftarrow} R_1$ be a 1-diagram as above.
We use the maps $i,j$ in the diagram to glue the following disjoint union of
\begin{align}
\amalg_k \left( R_k \times \{ r e^{ \sqrt{-1} \pi t} \in D^2 ~|~ k \leq t \leq k+1 \} \right) \amalg \left( R_{01} \times \{ -1 \leq r \leq 1, ~ r \in D^2 \} \right) .
\end{align}
Here, we regard the 2-disk embedded in $\mathbb{C}$.
We denote the space by $A_1 (R)$.

Let $R$ be a 2-diagram as above.
We glue following spaces using the maps associated with the diagram $R$.
\begin{itemize}
\item
$\amalg_k \left( R_k \times \{ r e^{2\pi t/3} ~|~ k \leq t \leq k+1 \} \right)$
\item
$ R_{01} \times \{ r \in D^2 \} \amalg R_{12} \times \{r e^{2\pi /3} \in D^2 \} \amalg R_{02} \times \{ r e^{4\pi /3} \in D^2 \} $
\item
$R_{012} \times \{ 0 \}$
\end{itemize}
Here, $r$ denotes a non-negative real number.
We detnoe by the space $A_2 (R)$.
\end{Defn}

\begin{remark}
We have an obvious projection $A_k (R) \to D^2$ for a $k$-diagram $R$.
We consider $A_k (R)$ as a bundle over $D^2$ via the projection
\end{remark}

\begin{Defn}
\label{defn_2disk_B}
We define a pointed space $B_k (R)$ for a k-diagram $R$ by
\begin{align}
B_k (R) \stackrel{\mathrm{def.}}{=} A_k (R) / A_k (R) |_{\partial D^2}
\end{align}
For a morphism between $k$-diagrams $f : R \to R^\prime$, we denote by $B_k (f) : B_k (R) \to B_k (R^\prime)$ the induced continuous map.

Fix orientation preserving embeddings $\theta_{1,k} : D^2 \to D^2$ for $k=0,1$ such that
\begin{align}
D^2 \backslash \theta_{1,k} (D^2) = \{ r e^{\sqrt{-1}\pi t} \in D^2 ~|~ k < t < k+1,~r \neq 0 \} .
\end{align}
Then $\theta_{1,k}$ is lifted to an embedding $\bar{\theta}_{1,k} : A_0 (\partial_k R) \to A_1 (R)$ for a 1-diagram $R$.
It induces a pointed map $q_{1,k} : B_1 (R) \to B_0 (\partial_k R)$ for $k=0,1$.

In a similar way, we define $q_{2,k} : B_2 (R) \to B_1 (\partial_k R)$ where $k= 0,1,2$ and $R$ is a 2-diagram of spaces.
Fix orientation preserving embeddings $\theta_{2,k} : D^2 \to D^2$ for $k=0,1,2$ such that
\begin{align}
D^2 \backslash \theta_{2,k} (D^2) =  \{ r e^{\sqrt{-1} 2\pi t /3} \in D^2 ~|~ k < t < k+1,~r \neq 0 \} .
\end{align}
It is lifted to an embedding $\bar{\theta}_{2,k} : A_1 (\partial_k R) \to A_2 (R)$ for a 2-diagram $R$, which induces a pointed map $q_{2,k} : B_2 (R) \to B_1 (\partial_k R)$.
\end{Defn}

\begin{Defn}
Let $L$ be a compact $n$-manifold possibly with boundary.
We define a category $\mathcal{C} (L)$ of embeddings from $n$-disk $D^n$ into $L$.
For two embeddings $i_0,i_1 : D^n \to L$, a pre-morphism $j$ is given as an embedding $j: D^n \times [0,1] \to L \times [0,1]$ such that
\begin{itemize}
\item
$j(x,t) = (j_t (x) , t)$
\item
$j(x,0) = i_0 (x)$ and $j(x,1) =i_1 (x)$
\end{itemize}
We identify two pre-morphisms $j_0 , j_1$ if there exists an embedding $h : (D^n \times [0,1] ) \times [0,1] \to (L\times [0,1] ) \times [0,1]$ such that
\begin{itemize}
\item
$h(x,t,0) = j_0 (x,t)$ and $h(x,t,1) = j_1 (x,t)$
\item
$h(x,0,s) = i_0 (x)$ and $h(x,1,s) = i_1 (x)$
\end{itemize}
We define the composition $j^\prime \sharp j$ of pre-morphisms $j: i_0 \to i_1$ and $j^\prime : i_1 \to i_2$ as
\begin{align}
(j^\prime \sharp j) (x,t) \stackrel{\mathrm{def.}}{=} 
\begin{cases}
(j_{2t} (x) , t) &\text{if $0 \leq t \leq 1/2$} \\
(j^\prime_{2t-1} (x) , t) &\text{if $1/2 t \leq 1$.}
\end{cases}
\end{align}

Let us define a functor $\chi_L : \mathcal{C} (L) \to \Pi_1 Map (L/\partial L , D^n / \partial D^n)$.
For an object $i$ of $\mathcal{C} (L)$, i.e. an embedding $i: D^n \to L$, we define a map $\chi_L (i) \in Map (L/\partial L , D^n/ \partial D^n) $ as the collapsing map $L / \partial L \to i(D^n)/i(\partial D^n) \cong D^n / \partial D^n$.
For a morphism $j$ from $i_0$ to $i_1$, we define a homotopy $\chi_L (j) =h$ from $\chi_L (i_0)$ to $\chi_L (i_1)$ by
\begin{align}
h (x, t) \stackrel{\mathrm{def.}}{=} 
\begin{cases}
[y] &\text{if $j(y,t) = (x,t) \in L\times [0,1]$} \\
\ast &\text{otherwise}
\end{cases}
\end{align}
\end{Defn}

\begin{remark}
Suppose that a $1$-diagram $R = ( R_0 \to R_{01} \leftarrow R_1)$ is constant.
Then $q_{1,k} : B_1 (R) \to B_0 (\partial_k R)$ coincides with $\chi_L (\theta_{1,k}) \wedge id_{R^+_{01}}$.
Similar claim holds for $2$-diagrams.
\end{remark}

\begin{Defn}
Let us define a symmetric monoidal category $\mathbf{Cob}^{E,rep}_{n-1} (T)$ for a space $T$.

A triple $(Y, g, \xi)$ is an object of $\mathbf{Cob}^{E,rep}_{n-1} (T)$ if the followings hold :
\begin{itemize}
\item
$Y$ is a closed $E$-oriented $(n-1)$-manifold such that $E_{n} (Y) \cong 0$.
\item
$g : Y \to T$ is a continuous map.
\item
$\xi \in \hat{E}_{n+1} (B_0 (Y))$ which represents the $E$-orientation of $Y$.
\end{itemize}

We define a morphism as an equivalence class of pre-morphisms which we explain from now on.
A quintuple $(X ,Y_0 , Y_1 , f , \eta)$ is a pre-morphism from $(Y_0, g_0, \xi_0)$ to $(Y_1, g_1, \xi_1)$ if the followings hold :
\begin{itemize}
\item
$X$ is an $E$-oriented bordism from $Y_0$ to $Y_1$ which are closed $E$-oriented manifolds such that $E_n (Y_0) \cong 0 \cong E_n (Y_1)$.
\item
$f: X \to T$ is a continuous map whose restriction to boundaries coincides with the map $g_0 \amalg g_1 : Y_0 \amalg Y_1 \to T$.
\item
$\eta \in \hat{E}_{n+1} (B_1 (X) )$ represents the $E$-orientation of $X$.
Here, $B_1 (X)$ denotes $B_1 (R)$ where $R$ is the 1-diagram of spaces $Y_0 \to X \leftarrow Y_1$ where the maps are inclusions.
\end{itemize}

We define the identity in the category as follows.
For an object $(Y, g, \xi)$, we define its identity as the class $[ Y\times [0,1] , g \circ \pi , \xi^\prime]$ where $\pi : Y \times [0,1] \to Y$ is the projection and $\xi^\prime$ is a representative of the $E$-orientation of $Y \times [0,1]$.

We define the composition in the category as follows.
For two pre-morphisms $(X ,Y_0 , Y_1 , f , \eta)$, $(X^\prime ,Y_1 , Y_2 , f^\prime, \eta^\prime )$, we define their composition as the class of a pre-morphism $(X^\prime \circ X , Y_0 , Y_2 , f^\prime \sharp f , \eta^{\prime\prime} )$ where $f^\prime \sharp f$ is the map obtained by gluing $f,f^\prime$ and $ \eta^{\prime\prime}$ is a representative of $E$-orientation of $X^\prime \circ X$.
\end{Defn}

Then the data of $\mathbf{Cob}^{E,rep}_{n-1} (T)$ defined above forms a category.
We construct a symmetric monoidal category structure on the category $\mathbf{Cob}^{E,rep}_{n-1} (T)$.
We define its `tensor product' by
\begin{align}
(Y, g, \xi) \amalg (Y^\prime , g^\prime , \xi^\prime ) \stackrel{\mathrm{def.}}{=}  (Y \amalg Y^\prime , g \amalg g^\prime , \xi \oplus \xi^\prime )
\end{align}
Then there are obvious associator and left (right) unit morphisms.
Note that the associator is induced by the associator of the monoidal structure of $\hat{E}_{n+1} ( Y \amalg Y^\prime \amalg Y^{\prime\prime})$.

\begin{Defn}
Let us define a symmetric monoidal functor $\Phi_T : \mathbf{Cob}^{E,rep}_{n-1} (T) \to \hat{E}_{n+1} ( B_0 (T) )$.
For simplicity, we denote by $f_\ast = \hat{E}_{n+1} (f)$ for a continuous map $f$ and $h_{\ast\ast} = \hat{E}_{n+1} (h)$ for a homotopy $h$.

Let $(Y,g,\xi)$ be an object of $\mathbf{Cob}^{E,rep}_{n-1} (T)$.
We define $\Phi_T (Y,g,\xi)$ by
\begin{align}
\Phi_T (Y,g,\xi) \stackrel{\mathrm{def.}}{=} B_0 (g)_\ast \xi \in \hat{E}_{n+1} (B_0 (B\Gamma)) .
\end{align}
Let $[X ,Y_0 , Y_1 , f , \eta]$ be a morphism from $(Y_0,g_0,\xi_0)$ to $(Y_1,g_1,\xi_1)$ in the category $\mathbf{Cob}^{E,rep}_{n-1} (T)$.
In order to define $\Phi_T (X ,Y_0 , Y_1 , f , \eta)$, we choose and fix a morphism $\gamma : \theta_{1,1} \to \theta_{1,0}$ in the category $\mathcal{C}(L), ~L=D^2$.
The bordism $X$ induces a 1-diagram $R= (R_0 \to R_{01} \leftarrow R_1)=(Y_0 \stackrel{i_1}{\to} X \stackrel{i_0}{\leftarrow} Y_1)$.
Then we are given following diagram.
Note that we have $B_0 (i_k) \circ q_{1,k} = \chi_L (\theta_{1,k}) \wedge id_{R^+_{01}}$.
Hence, the morphism $\gamma$ induces a homotopy $h_X : B_0 (i_1) \circ q_{1,1} \to B_0 (i_0) \circ q_{1,0} $ upto homotopy, i.e. $h_X$ is a morphism in $\Pi_1 Map (B_1 (R) , B_0 (R_{01}))$.
Note that we have unique morphism $\kappa_1 : (q_{1,1})_\ast \eta \to \xi_0$ and $\kappa_0 : (q_{1,0})_\ast \eta \to \xi_1$ since $\hat{E}_{n+1} ( B_0 (Y_k))$'s are simply connected by assumptions.
We define 
\begin{align}
\Phi_T [X ,Y_0 , Y_1 , f , \eta] \stackrel{\mathrm{def.}}{=} \left( B_0(g_1)_\ast \kappa_0 \right) \circ \left( B_0(f)_\ast (h_X)_{\ast\ast} \eta \right) \circ  \left( B_0(g_0)_\ast \kappa_1 \right),
\end{align}
which is a morphism from $B_0 (g_0)_\ast \xi_0$ to $B_0 (g_1)_\ast \xi_1$ in the category $\hat{E}_{n+1}(B_0 (B\Gamma))$.

We define a natural morphism $\Phi_T ((Y,g,\xi) \amalg (Y^\prime ,g^\prime \xi^\prime)) \to \Phi_T (Y,g,\xi) \oplus \Phi_T (Y^\prime ,g^\prime ,\xi^\prime)$ as the morphism $B_0 (g\amalg g^\prime)_\ast (\xi \oplus \xi^\prime) \to B_0 (g)_\ast (\xi) \oplus B_0 (g^\prime)_\ast (\xi^\prime)$ induces by the monoidality of the monoidal functor $B_0$ and monoidal category $\hat{E}_{n+1}(B\Gamma)$.
\end{Defn}

\begin{prop}
\label{phi_is_smf}
$\Phi_T$ gives a symmetric monoidal functor.
\end{prop}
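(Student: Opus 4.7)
The plan is to verify in order three things: well-definedness of the formula defining $\Phi_T$ on morphisms, functoriality (identity and composition), and the coherence axioms for the symmetric monoidal structure. The common technical device is that the hypothesis $E_n(Y) \cong 0$ on objects, via the proposition computing $\pi_1$ of $\hat{E}_\bullet$, forces $\pi_1(\hat{E}_{n+1}(B_0(Y))) \cong \tilde{E}_n(Y)$ to vanish; hence the groupoid $\hat{E}_{n+1}(B_0(Y))$ has no nontrivial automorphisms. In particular the morphisms $\kappa_0, \kappa_1$ appearing in the construction are uniquely determined, and any two parallel morphisms in $\hat{E}_{n+1}(B_0(Y_k))$ must agree.

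First I would establish well-definedness of $\Phi_T[X, Y_0, Y_1, f, \eta]$. The category $\mathcal{C}(D^2)$ has an essentially unique morphism $\theta_{1,1} \to \theta_{1,0}$, because the relevant space of orientation-preserving embeddings with the prescribed open complement is contractible; hence the homotopy class of $h_X$ (and therefore of $(h_X)_{\ast\ast} \eta$) is independent of the choice of $\gamma$. Independence from the representative $\eta$ of the $E$-orientation, and from the representative of $f$ in its rel-boundary homotopy class, follow from naturality of $B_0$ and of $\hat{E}_{n+1}$ together with the uniqueness of $\kappa_0, \kappa_1$ just noted.

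For identity preservation on $(Y, g, \xi)$, the bordism is $Y \times [0,1]$, and one may choose a representative $\eta_0 \in \hat{E}_{n+1}(B_1(Y \times [0,1]))$ that is pulled back from $\xi$ along the canonical collapse onto $B_0(Y)$; with this choice the homotopy $h_{Y \times [0,1]}$ may be taken constant, so that the formula collapses to the identity morphism on $B_0(g)_\ast \xi$. For composition, given composable $[X, \ldots]$ and $[X', \ldots]$, I would build a representative $\eta''$ for $X' \circ X$ by gluing $\eta$ and $\eta'$ and choose the isotopy $\gamma$ for the composite bordism by concatenating the two halves used for $X$ and $X'$. With this compatible choice, the homotopy $h_{X' \circ X}$ decomposes as a concatenation passing through an intermediate collapse onto $Y_1$, and the explicit formula for $\Phi_T$ on the composite unfolds as the prescribed composite of $\Phi_T[X', \ldots]$ with $\Phi_T[X, \ldots]$, where uniqueness of $\kappa_1$ is used to cancel the two occurrences of $B_0(g_1)_\ast \xi_1$.

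Finally, for the symmetric monoidal structure, the natural morphism $\Phi_T((Y, g, \xi) \amalg (Y', g', \xi')) \to \Phi_T(Y, g, \xi) \oplus \Phi_T(Y', g', \xi')$ is built from Proposition \ref{second_wedge_EE} together with the natural homeomorphism $B_0(Y \amalg Y') \cong B_0(Y) \vee B_0(Y')$, obtained by observing that collapsing $\partial D^2$ turns a fiberwise disjoint union into a wedge. The associator, unitor, and symmetry coherences reduce to the corresponding coherences in $\hat{E}_{n+1}(B_0(T))$ via naturality of this equivalence. The main obstacle I expect is the composition step: one must verify that the isotopy $\gamma$ for $X' \circ X$ can be chosen so that the single homotopy $h_{X' \circ X}$ agrees, as a morphism in the relevant fundamental groupoid, with the concatenation of the homotopies produced for $X$ and $X'$ separately. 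This is essentially a geometric statement that the collapse maps of $D^2$ used to define the $q_{1,k}$ admit a factorization through an intermediate configuration carrying the middle fiber $Y_1$, and it is the only place in the argument where the full two-dimensional nature of the parameter disc (rather than a one-dimensional interval) is genuinely used.
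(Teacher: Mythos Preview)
Your overall strategy matches the paper's, and your use of the vanishing of $\pi_1(\hat{E}_{n+1}(B_0(Y)))$ to force uniqueness of the connecting morphisms $\kappa_k$ is exactly the key device. The identity argument differs only cosmetically: you choose a special representative $\eta_0$ and a constant homotopy, whereas the paper keeps an arbitrary $\eta$ and observes that after pushing along $B_0(\pi)_\ast$ into the simply-connected groupoid $\hat{E}_{n+1}(B_0(Y))$ the morphism $B_0(\pi)_\ast(h_{Y\times[0,1]})_{\ast\ast}\eta$ must be the identity.

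The genuine difference is in the composition step. Your plan is to ``glue $\eta$ and $\eta'$'' and ``concatenate the isotopies,'' but note that $\gamma : \theta_{1,1} \to \theta_{1,0}$ is fixed once in the definition of $\Phi_T$, not chosen per bordism, and the homotopies $h_X, h_{X'}, h_{X'\circ X}$ live in fundamental groupoids of maps out of three \emph{different} spaces $B_1(\partial_2 R), B_1(\partial_0 R), B_1(\partial_1 R)$; there is no evident way to ``concatenate'' them directly. The paper compares them by pulling everything back to the common source $B_2(R)$ for the $2$-diagram $R$ built from $X'\circ X$: it picks a single class $\omega \in \hat{E}_{n+1}(B_2(R))$ representing the orientation, replaces $\eta, \eta', \eta''$ by $(q_{2,2})_\ast\omega, (q_{2,0})_\ast\omega, (q_{2,1})_\ast\omega$, and then chooses auxiliary isotopies $k'_0, k'_1, k'_2$ inducing homotopies $k_c$ that assemble into the hexagonal commutative diagram (\ref{homotop_comm_require}) in $\Pi_1 Map(B_2(R), B_0(X'\circ X))$. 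That diagram, together with uniqueness of the $\kappa$'s, yields $\Phi_T[X'\circ X] = \Phi_T[X']\circ \Phi_T[X]$. This is precisely the role of the $B_2$-construction flagged in subsection~\ref{main_feat_construction}; your final paragraph correctly anticipates that something two-dimensional is needed here, but the paper supplies it via the $2$-diagram machinery and the hexagon rather than via any direct concatenation of one-parameter homotopies.
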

\begin{proof}
We prove that $\Phi_T$ sends an identity morphism to an identity morphism.
Let $[Y\times [0,1] , g \circ \pi , \xi^\prime]$ be an identity morphism on the object $(Y, g , \xi) \in \mathbf{Cob}^{E,rep}_{n-1} (T) $.
We have,
\begin{align}
\Phi_{T} ([Y\times [0,1] , g \circ \pi , \xi^\prime]) &= B_0 (g \circ \pi)_\ast (h_{Y\times [0,1]})_{\ast\ast} \eta \\ 
&= B_0 (g)_\ast \circ B_0 (\pi)_\ast (h_{Y\times [0,1]})_{\ast\ast} \eta .
\end{align} 
Note that the target category of the functor $B_0 (\pi)_\ast$ is $\hat{E}_{n+1} (B_0 (Y))$ which is simply-connected by the assumption $E_n (Y) \cong 0$.
Hence, $B_0 (\pi)_\ast (h_{Y\times [0,1]})_{\ast\ast} \eta $ is the identity morphism on $B_0 (g)_\ast \xi$ so that $\Phi_T$ sends an identity morphism to an identity morphism.

We prove that $\Phi_T$ preserves the composition of morphisms.
Let us consider a 2-diagram $R$ such that $R_{012}= X^\prime \circ X$, $\partial_2 R = ( Y_0 \to X \leftarrow Y_1)$, $\partial_0 R = (Y_1 \to X^\prime \leftarrow Y_2)$, $\partial_1 R = (Y_0 \to X^\prime \circ X \leftarrow Y_2)$.
We set the structure map $(R_{02} \to R_{012} ) = (X^\prime \circ X \to X^\prime \circ X$ to be the identity map, $(R_{12} \to R_{012}) = (X^\prime \to X^\prime\circ X$ and $(R_{01} \to R_{012}) = (X \to X^\prime \circ X)$ to be inclusions.
We choose isotopies of embeddings $k^\prime_0,k^\prime_1,k^\prime_2$ given in the diagram below :
\begin{equation}
\begin{tikzcd}
 & A_2 (R) &  \\
A_1 (\partial_a R) \arrow[ru, "\bar{\theta}_{2,a}"] & \stackrel{k^\prime_{c}}{\Longrightarrow} & A_1 (\partial_b R) \arrow[lu, "\bar{\theta}_{2,b}"] \\
 & A_0 (R_c)\arrow[lu, "\bar{\theta}"] \arrow[ru, "\bar{\theta}"]  & 
\end{tikzcd}
\end{equation}
Here, $a < b$ and $\{ a,b,c\} = \{0,1,2 \}$.
Then the isotopy $k^\prime_c$ induce a homotopy $k_c$ :
\begin{equation}
\begin{tikzcd}
 & B_2 (R) \arrow[ld] \arrow[rd] &  \\
B_1 (\partial_a R) \arrow[rd] & \stackrel{k_c}{\Longrightarrow} & B_1 (\partial_b R) \arrow[ld] \\
 & B_0 (R_c) & 
\end{tikzcd}
\end{equation}
They can be denoted as follows.
\begin{itemize}
\item $k_0 :  q_{1,1}\circ q_{2,1} \to q_{1,1} \circ q_{2,2}$
\item $k_1 : q_{1,1} \circ q_{2,0} \to  q_{1,0}\circ q_{2,2}$
\item $k_2:  q_{1,0}\circ q_{2,0} \to q_{1,0} \circ  q_{2,1}$
\end{itemize}
We can choose such $k_c$'s satisfying the following commutative diagrams in $\Pi_1 Map ( B_2 (R) , B_0 (R_{012}))$.
\begin{equation}
\label{homotop_comm_require}
\begin{tikzcd}
B_0(i_0) \circ q_{1,1} \circ q_{2,2} \arrow[d, "h_{X}"] & B_0 (i_0) \circ q_{1,1} \circ q_{2,1} \arrow[l, "k_0"] \arrow[r, "h_{X^\prime\circ X}"] & B_0(i_2) \circ q_{1,0} \circ q_{2,1} \\
B_0 (i_1)\circ q_{1,0} \circ q_{2,2} &  & B_0 (i_2) \circ q_{1,0} \circ q_{2,0} \arrow[u , "k_2"] \\
 & B_0 (i_1) \circ q_{1,1} \circ q_{2,0} \arrow[lu, "k_1"] \arrow[ru, "h_{X^\prime}"] & 
\end{tikzcd}
\end{equation}
Here, we denote by $i_k : Y_k \to X^\prime \circ X$ the inclusion for $k=0,1,2$.
\begin{align}
\Phi_T & (X^\prime , Y_1 , Y_2 , f^\prime , \eta^\prime) \circ \Phi_T (X , Y_0 , Y_1 , f , \eta) \\
&= \Phi_T (X^\prime , Y_1 , Y_2 , f^\prime , q_{2,0}\omega ) \circ \Phi_T (X , Y_0 , Y_1 , f , q_{2,2} \omega ) \\
&= \left( B_0 (g_2)_\ast \kappa^\prime_2 \right) \circ \left( B_0 (f^\prime)_\ast (h_{X^\prime})_{\ast\ast} q_{2,0} \omega \right) \circ \left( B_0 (g_1)_\ast \kappa^\prime_1 \right) \\
& \circ \left( B_0 (g_1)_\ast \kappa_1 \right) \circ \left( B_0 (f)_\ast (h_{X})_{\ast\ast} q_{2,2} \omega \right) \circ  \left( B_0 (g_0)_\ast \kappa_0 \right)
\end{align}
Note that $\kappa^\prime_1 \circ \kappa$ is the unique morphism connecting its source and target.
Hence, we have $\kappa^\prime_1 \circ \kappa = (k_1)_{\ast\ast} \omega$.
\begin{align}
&\Phi_T (X^\prime \circ X , Y_0 , Y_2 , f^\prime\sharp f , \eta^{\prime\prime}) \\
&= \Phi_T (X^\prime \circ X, Y_0 , Y_2 , f^\prime \sharp f , q_{2,1} \omega ) \\
&= \left( B_0 (g_2)_\ast \kappa^\prime_2 \right) \circ \left( B_0 (f^\prime \sharp f)_\ast (h_{X^\prime\circ X})_{\ast\ast} q_{2,1} \omega \right) \circ \left( B_0 (g_0)_\ast \kappa_0 \right) 
\end{align}
Above all, we can check that $\Phi_T (X^\prime \circ X , Y_0 , Y_2 , f^\prime\sharp f , \eta^{\prime\prime}) = \Phi_T (X^\prime , Y_1 , Y_2 , f^\prime , \eta^\prime) \circ \Phi_T (X , Y_0 , Y_1 , f , \eta) $ due to the uniqueness of $\kappa_a , \kappa^\prime_a$'s and the commutative diagram (\ref{homotop_comm_require}).

The symmetric monoidality of the functor $\Phi_T$ is shown using the monoidal structure of category $\hat{E}_{n+1}(B\Gamma)$, functor $B_0 (-)$, and the symmetric monoidal equivalence in Proposition \ref{second_wedge_EE}.
\end{proof}

\begin{Defn}
Let $\hat{\alpha} \in \hat{F}^{n+2} ( B_0 (B\Gamma))$ be an object.
It induces a symmetric monoidal functor $\langle \hat{\alpha} , - \rangle : \hat{E}_{n+1} (B_0 (B\Gamma)) \to \hat{G}_{-1}(S^0)$.
For a symmetric monoidal functor $\hat{\varphi} : \hat{G}_{-1}(S^0) \to \mathbb{F}^{\times}\mathbf{Tor}$, we define a symmetric monoidal functor $\hat{S}_{\hat{\varphi},\hat{\alpha}} : \mathbf{Cob}^{E}_{n-1} (B\Gamma) \to \mathbb{F}^{\times}\mathbf{Tor} $ by composing following symmetric monoidal functors :
\begin{align}
\mathbf{Cob}^{E,rep}_{n-1} (B\Gamma) \stackrel{\Phi_{B\Gamma}}{\to} \hat{E}_{n+1} (B_0 (B\Gamma)) \stackrel{\langle \hat{\alpha},-\rangle}{\to} \hat{G}_{-1} (S^0) \stackrel{\hat{\varphi}}{\to} \mathbb{F}^{\times}\mathbf{Tor} 
\end{align}
, and using the equivalence $\mathbf{Cob}^{E,rep}_{n-1} (B\Gamma) \to \mathbf{Cob}^{E}_{n-1} (B\Gamma) ; (Y,g,\xi) \mapsto (Y,g)$.
\end{Defn}

\begin{remark}
Let $\hat{\varphi} : \hat{G}_{-1}(S^0) \to \mathbb{F}^{\times}\mathbf{Tor}$ be a symmetric monoidal functor.
Then $\hat{\alpha} \mapsto \hat{S}_{\hat{\varphi},\hat{\alpha}}$ gives a symmetric monoidal functor $\hat{F}^{n+2}(B_0 (B\Gamma)) \to \mathbf{Cob}^{n-1}_E ( B\Gamma ; \mathbb{F}^{\times}\mathbf{Tor})$.
\end{remark}


\section{Quantum theory from generalized (co)homology}
\label{Quantum theory from generalized (co)homology}

In this section, we construct a TQFT for $E$-oriented manifolds starting from a representative $\hat{\alpha}$ of a class $\alpha \in F^n (B\Gamma)$.
It is obtained by applying a push-forward to our classical Lagrangian classical field theory.
The TQFT yields an invariant for closed $E$-oriented $n$-manifolds which coincides with the generalized Dijkgraaf-Witten invariant defined in subsection \ref{subsec_DW_invariant_GCH}.

There exists a push-forward functor $\mathbf{Cob}^{n-1}_E (B\Gamma ;\mathbf{Vect}_{\mathbb{F}}) \to \mathbf{Cob}^{n-1}_E (\mathrm{pt} ;\mathbf{Vect}_{\mathbb{F}}) $.
It is constructed by using the ambidexterity of the category $\mathbf{Vect}_{\mathbb{F}}$ \cite{HL}.
We sketch the construction.
Let $\mathcal{C},\mathcal{D}$ be categories and $F : \mathcal{C} \to \mathcal{D}$ be a functor.
We suppose following assumptions.
\begin{itemize}
\item
The categories $F^{-1}(d)$, $F^{-1}(l)$ are finite groupoids for an object $d$ and a morphism $l$ in $\mathcal{D}$.
\item
Let $l: d_0 \to d_1$, $l^\prime : d_1 \to d_2$ be morphisms in $\mathcal{D}$.
Suppose that the canonical functor $F^{-1}(l)_p \times_q F^{-1}(l^\prime) \to f^{-1} (l^\prime \circ l)$ induces a equivalence of categories where $p: F^{-1}(l) \to F^{-1}(d_1)$, $q : F^{-1} (l) \to F^{-1}(d_1)$ are projections.
\end{itemize}
For an object $d \in \mathcal{D}$, we denote by $f^{-1}(d)$ the homotopy fiber category, i.e. the category of pair $(c, u)$ such that $c \in \mathcal{C}$ and $u : F(c) \to d$ is an isomorphism.
For a morphism $l : d_0 \to d_1$ in $\mathcal{D}$, we also denote by $F^{-1}(l)$ the homotopy fiber category, i.e. the category of triples $(h,u,v)$ such that $h$ is a morphism in $\mathcal{C}$, $u \circ F(h) \circ v = l$ and $u,v$ are isomorphisms in the category $\mathcal{D}$.
Under the assumptions, we construct a functor $F_{!} : Fun ( \mathcal{C} , \mathbf{Vect}_{\mathbb{F}} ) \to Fun ( \mathcal{D} , \mathbf{Vect}_{\mathbb{F}} )$ as follows.
Let $f \in Fun ( \mathcal{C} , \mathbf{Vect}_{\mathbb{F}} )$, i.e. $f$ is a functor from $\mathcal{C}$ to $\mathbf{Vect}_{\mathbb{F}}$.
\begin{itemize}
\item
$\left( F_{!} (f) \right) (d) \stackrel{\mathrm{def.}}{=} \varprojlim f|_{F^{-1}(d)}$ for $d \in \mathcal{D}$.
\item
Let $l : d_0 \to d_1$ be a morphism in $\mathcal{D}$.
We define a morphism $\left( F_{!} (f) \right) (l)$ from $\left( F_{!} (f) \right) (d_0)$ to $\left( F_{!} (f) \right) (d_1)$ by composing the following maps,
\begin{equation}
\label{ambidex_push_forward_dia}
\begin{tikzcd}
 & \varprojlim f\circ \pi_0 \stackrel{\cong}{\to} \varinjlim f\circ \pi_1 \arrow[ld] \arrow[rd] &  \\
\varprojlim f|_{F^{-1}(d_0)} &  & \varinjlim f|_{F^{-1}(d_1)} \stackrel{\cong}{\leftarrow} \varprojlim f|_{F^{-1}(d_1)}
\end{tikzcd}
\end{equation}
where $\pi_k : F^{-1}(l) \to F^{-1}(d_k)$ is the projection for $k=0,1$.
Here, we use the first assumption above and the ambidexterity of the category $\mathbf{Vect}_{\mathbb{F}}$ .
\end{itemize}
Then the assignment $F_! f$ defined previously gives a functor from $\mathcal{D}$ to $\mathbf{Vect}_{\mathbb{F}}$ due to the second assumption above.
Note that for the functor $c_\ast : \mathbf{Cob}^E_{n-1}(B\Gamma) \to \mathbf{Cob}^E_{n-1} (\mathrm{pt})$, we have equivalences $(c_\ast)^{-1}(Y) \simeq \Pi_1 Map (Y, B\Gamma)$ and $(c_\ast)^{-1}(X) \simeq \Pi_1 Map (X , B\Gamma)$ where $Y,X$ are an object and a morphism in the category $\mathbf{Cob}^E_{n-1} (\mathrm{pt})$ respectively.
Hence, $c_\ast$ satisfies the first assumption above.
Moreover, it satisfies the second assumption since the following diagram forms a homotopy pull-back diagram induced by restrictions where $X : Y_0 \to Y_1$, $X^\prime : Y_1 \to Y_2$ are bordisms.
$$
\begin{tikzcd}
 &  \Pi_1 Map (X^\prime \circ X, B\Gamma) \arrow[ld] \arrow[rd] &  \\
 \Pi_1 Map (X, B\Gamma) \arrow[rd] &  &  \Pi_1 Map (X^\prime, B\Gamma) \arrow[ld] \\
 &  \Pi_1 Map (Y_1, B\Gamma) & 
\end{tikzcd}
$$

\begin{Defn}
We define $\hat{Z}_{\hat{\varphi},\hat{\alpha}} \in \mathbf{Cob}^{n-1}_E (\mathrm{pt} ; \mathbf{Vect}_{\mathbb{F}})$ by 
\begin{align}
\hat{Z}_{\hat{\varphi},\hat{\alpha}} \stackrel{\mathrm{def.}}{=} c_{!} (\hat{S}_{\hat{\varphi},\hat{\alpha}}) .
\end{align}
\end{Defn}

\begin{theorem}
\label{corollary_from_cob_tqft_GCH}
Let $\hat{\varphi} : \hat{G}_{-1}(S^0) \to \mathbb{F}^{\times} \mathbf{Tor}$ be a symmetric monoidal functor.
Let $\varphi$ be the homomorphism $ \pi_1 (\hat{\varphi}) : G_0 (\mathrm{pt}) \cong \pi_1 (\hat{G}_{-1}(S^0)) \to \pi_1 ( \mathbb{F}^{\times} \mathbf{Tor}) \cong \mathbb{F}^{\times}$.
The invariant $Z_{\varphi,\alpha}$ in Definition \ref{TQFT_GCH_Zalpha_defn} extends to a strong symmetric monoidal functor :
\begin{align}
\hat{Z}_{\hat{\varphi},\hat{\alpha}} : ( \mathbf{Cob}^E_{n-1} , \amalg ) \to ( \mathbf{Vect}_{\mathbb{F}} , \otimes ) .
\end{align}
More precisely, we have
\begin{align}
inv : \pi_0 ( \mathbf{Cob}^{n-1}_E ( \mathrm{pt}  ; \mathbf{Vect}_{\mathbb{F}} ) ) &\to \mathbf{M}^n_E ( \mathrm{pt} ; k) , \\
[ \hat{Z}_{\hat{\varphi},\hat{\alpha}}  ] &\mapsto Z_{\varphi, \alpha} .
\end{align}
\end{theorem}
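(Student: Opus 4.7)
The plan is to assemble $\hat{Z}_{\hat{\varphi},\hat{\alpha}} := c_{!}(\hat{S}_{\hat{\varphi},\hat{\alpha}})$ using the push-forward defined at the start of this section, and then verify both that it is strong symmetric monoidal and that it recovers $Z_{\varphi,\alpha}$ on closed $n$-manifolds. The first input is the symmetric monoidal functor $\hat{S}_{\hat{\varphi},\hat{\alpha}}$: Proposition \ref{phi_is_smf} gives the symmetric monoidality of $\Phi_{B\Gamma}$, and both the pairing $\langle \hat{\alpha}, - \rangle$ and $\hat{\varphi}$ are symmetric monoidal by construction, so their composite (viewed via the natural embedding $\mathbb{F}^{\times}\mathbf{Tor} \hookrightarrow \mathbf{Vect}_{\mathbb{F}}$ as $1$-dimensional vector spaces) lies in $Fun(\mathbf{Cob}^{E}_{n-1}(B\Gamma), \mathbf{Vect}_{\mathbb{F}})$ as a symmetric monoidal functor.

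Next I would verify the hypotheses required to form $c_{!}$. Since $\Gamma$ is a finite group and the manifolds involved are compact CW-complexes, the fibers $(c_{\ast})^{-1}(Y) \simeq \Pi_1 Map(Y, B\Gamma)$ and $(c_{\ast})^{-1}(X) \simeq \Pi_1 Map(X, B\Gamma)$ are finite groupoids: the homotopy classes of maps form a finite set, and each $\pi_1 Map(\cdot, B\Gamma) \cong Aut(\cdot^{\ast}E\Gamma)$ is finite. The compositional compatibility is the homotopy pullback square already exhibited in the excerpt. With these in hand, $c_{!}\hat{S}_{\hat{\varphi},\hat{\alpha}}$ is well defined and lands in finite-dimensional vector spaces by fiber finiteness.

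To establish strong symmetric monoidality, I would exploit that disjoint unions pass to products of mapping spaces: for closed $(n-1)$-manifolds $Y, Y'$ one has $Map(Y \amalg Y', B\Gamma) \cong Map(Y, B\Gamma) \times Map(Y', B\Gamma)$, and similarly for composable bordisms. The ambidexterity norm over a product of finite groupoids factors as the tensor product of the individual norms in $\mathbf{Vect}_{\mathbb{F}}$, so the monoidality constraints of $\hat{S}_{\hat{\varphi},\hat{\alpha}}$ transport along the push-forward to coherent invertible isomorphisms $\hat{Z}(Y \amalg Y') \cong \hat{Z}(Y) \otimes \hat{Z}(Y')$, with coherence of associators and units inherited from those of $\hat{S}_{\hat{\varphi},\hat{\alpha}}$.

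Finally, for the identification on a closed $n$-manifold $X^n$: it is a morphism $\emptyset \to \emptyset$ in $\mathbf{Cob}^{E}_{n-1}$ with $\hat{Z}(\emptyset)=\mathbb{F}$, so diagram (\ref{ambidex_push_forward_dia}) reduces to the ambidexterity norm on $(c_{\ast})^{-1}(X^n) \simeq \Pi_1 Map(X^n, B\Gamma)$ applied to the scalar-valued functor $(X^n, f) \mapsto \varphi(S_{\alpha}(X^n, f))$. Decomposing into connected components $\amalg_{[f]} B\, Aut(f^{\ast}E\Gamma)$, the identification $\varprojlim \cong \varinjlim$ of a $1$-dimensional representation of the finite group $Aut(f^{\ast}E\Gamma)$ on each component introduces the weight $1/\sharp Aut(f^{\ast}E\Gamma)$, and summing over components yields
\begin{align}
\hat{Z}_{\hat{\varphi},\hat{\alpha}}(X^n) = \sum_{[f]} \frac{\varphi(S_{\alpha}(X^n, f))}{\sharp Aut(f^{\ast}E\Gamma)} = Z_{\varphi,\alpha}(X^n),
\end{align}
matching Definition \ref{TQFT_GCH_Zalpha_defn}. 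The main obstacle will be the last step: carefully unwinding the norm map on each $B\,Aut(f^{\ast}E\Gamma)$ against the categorical data of $\hat{S}$ at automorphisms, so that the $1/\sharp Aut$ factors emerge with the correct normalization. This is the only place where the abstract universal property of ambidexterity must be made explicit enough to recognize a concrete sum, and it is also the step most sensitive to the simple-connectivity hypothesis $E_n(Y^{n-1}) \cong 0$ built into the objects of $\mathbf{Cob}^{E}_{n-1}$.
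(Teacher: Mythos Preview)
Your proposal is correct and follows essentially the same route as the paper: define $\hat{Z}_{\hat{\varphi},\hat{\alpha}}=c_{!}(\hat{S}_{\hat{\varphi},\hat{\alpha}})$, note that $(c_\ast)^{-1}(\emptyset)\simeq\ast$ so $\hat{Z}(\emptyset)\cong\mathbb{F}$, and then trace the image of $1\in\mathbb{F}$ through diagram~(\ref{ambidex_push_forward_dia}) over $(c_\ast)^{-1}(X)\simeq\Pi_1 Map(X,B\Gamma)$ to recover the weighted sum $\sum_{[f]}\varphi(S_\alpha(X,f))/\sharp Aut(f^\ast E\Gamma)$. The paper's proof is terser and does not spell out the strong symmetric monoidality argument you give via the product decomposition of mapping spaces; that extra paragraph of yours is a genuine addition rather than a deviation. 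One small remark: your closing comment that the hypothesis $E_n(Y^{n-1})\cong 0$ is ``most sensitive'' at the norm-map step is slightly misplaced---that hypothesis is used earlier, in the construction of $\Phi_T$ (to force uniqueness of the morphisms $\kappa_0,\kappa_1$), and plays no role in the closed-manifold computation where the boundary is empty.
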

\begin{proof}
We consider the previous discussion for $\mathcal{C} = \mathbf{Cob}^E_{n-1}(B\Gamma), \mathcal{D} = \mathbf{Cob}^E_{n-1}(\mathrm{pt}) = \mathbf{Cob}^E_{n-1}$, $F=c_\ast$.
For the object $\emptyset  \in \mathbf{Cob}^E_{n-1}$ (the null space), we have $F^{-1} (\emptyset ) \simeq \ast$ so that $\varprojlim f|_{F^{-1}(\emptyset)} \cong \mathbb{F}$ as vector spaces.
Let $(X, \emptyset , \emptyset )$ be a morphism between null spaces in the category $\mathbf{Cob}^E_{n-1}$.
Note that we have $F^{-1} (X, \emptyset , \emptyset ) \simeq \Pi_1 Map (X, B\Gamma)$.
Hence, to compute $inv [\hat{Z}_{\hat{\varphi},\hat{\alpha}}]$, we check where $1 \in \mathbb{F} \cong \hat{Z}_{\hat{\varphi},\hat{\alpha}}(\emptyset)$ is sent by $\hat{Z}_{\hat{\varphi},\hat{\alpha}}(X)$ where $X$ is an $E$-oriented $n$-bordism which is closed.
Recall the definition of corresponding morphism in (\ref{ambidex_push_forward_dia}), then we obtain the followings.
$$
\begin{tikzcd}
 & (1 ~|~ f: X\to B\Gamma) \mapsto (\frac{S_{\varphi,\alpha}(X,f)}{\sharp Aut(f^\ast E\Gamma)}~|~f : X \to B\Gamma) \arrow[rd] &  \\
1 \in \mathbb{F} \arrow[ur] &  & \sum_{f} \frac{S_{\varphi,\alpha}(X,f)}{\sharp Aut(f^\ast E\Gamma)}
\end{tikzcd}
$$
It proves the claim since $\sum_{f} \frac{S_{\varphi,\alpha}(X,f)}{\sharp Aut(f^\ast E\Gamma)} = Z_{\varphi,\alpha}(X) \in \mathbb{F}$.
\end{proof}


\section{Examples}
\label{compute_examples}

\subsection{Untwisted theory}
\label{section_untwisted_theory}

In this subsection, we give some computations for the simplest case : $\hat{\alpha} \cong 0$.
We first give results under the simplest case : $\hat{\alpha} = 0$ strictly.
If we have a strict equality $\hat{\alpha} = 0$, then it is easy to compute $\hat{Z}_{\hat{\varphi},\hat{\alpha}}$.
Since $\hat{Z}_{\hat{\varphi},\hat{\alpha}}$ depends on $\hat{\alpha}$ naturally, we only need to consider the case of $\hat{\alpha} \cong 0$.

Let $\hat{\alpha} \in \hat{F}^{n+2}(B_0 (B\Gamma))$ be the unit. i.e. it is a collapsing morphism between some spectra.
Since $\alpha = 0 \in F^n (B\Gamma)$, it is obvious that $S_{\alpha} (X^n , f) = 0 \in \widetilde{G}_0 (S^0)$.
For a connected closed $E$-oriented $n$-manifold $X^n$, we have
\begin{align}
Z_{\varphi, \alpha} (X) = \frac{\sharp Hom (\pi_1 (X) , \Gamma ) }{\sharp \Gamma}  \in \mathbb{F}.
\end{align}
In particular, for a $E$-oriented closed connected $(n-1)$-dimensional manifold $Y$ such that $E_n (Y) \cong 0$, we have
\begin{align}
dim_{\mathbb{F}} (\hat{Z}_{\hat{\varphi} , \hat{\alpha}} (Y)) = \frac{\sharp Hom (\pi_1 (Y \times S^1) , \Gamma ) }{\sharp \Gamma} .
\end{align}

\subsection{The case of $\Gamma = \mathbb{Z}/2$, $E=F=H\mathbb{Z}/2$}
In that case, we have $F^n (B\Gamma) = H^n (B\mathbb{Z}/2 ; \mathbb{Z}/2 ) = H^n (\mathbb{R}P^{\infty} ; \mathbb{Z}/2 ) \cong \mathbb{Z}/2$ so that we have only one nontrivial class $\alpha = w^n_1$ where $w_1$ denotes the first Stiefel Whitney class.
We write $Z_{\varphi , \alpha}= Z_{w^n_1}$ for short.

For simplicity, let us denote by $\tau_{X}$ the map $H^1 (X; \mathbb{Z}/2) \to H^n (X ; \mathbb{Z}/2) ~;~z \mapsto z^n$ for an $n$-manifold $X$.

Let $n$ be arbitrary natural number.
For closed $n$-manifold $X$ whose first betti number is 1, we have
\begin{align}
Z_{w^n_1} ( X^n) = 
\begin{cases}
1 & \text{$\tau_X = 0$} \\
0 & \text{$\tau_X \neq 0$}
\end{cases}
\end{align}
We give two examples as such manifolds $X$, the real projective space $\mathbb{R}P^n$ and the Dold manifold $P(m,l),~m,l\geq 1$.
Based on this computation, we can compute the dimension of vector spaces.
We have $Z_{w^n_1} (\mathbb{R}P^n) = 0$ and $Z_{w^n_1} (P(m,l)) = 1$ where $m+ 2l = n$.
If the first Betti number of a closed $(n-1)$-manifold $Y$ is zero, then we have
\begin{align}
dim_{\mathbb{F}} \hat{Z}_{w^n_1} (Y) = 1
\end{align}
It is easy to check that if $Y$ is 1-connected, then the linear space $Z_{\hat{\varphi},\alpha}$ is one-dimensional in general, but in this case, we have more general results.

Let us suppose that $n= 2^k$ for some natural number $k$.
For such specific $n$, it is possible to compute all of the Dijkgraaf-Witten invariants of manifolds.
\begin{align}
Z_{w^n_1} ( X^n) =
\begin{cases}
1 & \text{$\tau_X = 0$} \\
0 & \text{$\tau_X \neq 0$}
\end{cases}
\end{align}
For example, we have
\begin{align}
Z_{w^n_1} ( \Sigma_g ) = 2^{2g-1},
\end{align}
where $\Sigma_g$ is the closed surface with genus $g$.
Also we have 
\begin{align}
Z_{w^n_1} (K^{n_1} \times L^{n_2}) = 2^{\beta_1 (K^{n_1}) + \beta_1 (L^{n_2}) -1} ,
\end{align}
where $n_1,n_2$ are nonnegative integers such that $n_1 + n_2 = n$.
Thus, we have 
\begin{align}
dim_{\mathbb{F}} \hat{Z}_{w^n_1} (Y) = 2^{\beta_1 (Y)} .
\end{align}
The above computations follow from the proposition below.

\begin{prop}
Let $X$ be a closed $n$-manifold.
If $\beta_1 (X)=1$ or $n= 2^k$ for some $k$, then we have
\begin{align}
Z_{w^n_1} (X) = \frac{1}{2} \prod_k (1 + \varphi ( \langle [X] , v^n_k \rangle ) ) \in \mathbb{F}
\end{align}
where we take a basis $v_k$'s of $H^1 (X ; \mathbb{Z}/2)$.
\end{prop}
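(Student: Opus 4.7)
My plan is to evaluate $Z_{w_1^n}(X)$ directly from Definition~\ref{TQFT_GCH_Zalpha_defn} and match it term-by-term with the claimed product. I would assume $X$ is connected (the general case reduces via multiplicativity of both sides under disjoint union). First, since $B\mathbb{Z}/2\simeq\mathbb{R}P^\infty$ is a $K(\mathbb{Z}/2,1)$, homotopy classes of maps $f:X\to B\mathbb{Z}/2$ correspond bijectively to $H^1(X;\mathbb{Z}/2)$ via $f\mapsto v\stackrel{\mathrm{def.}}{=}f^*w_1$. By naturality of the Kronecker pairing, $S_{\alpha}(X,f)=\langle f_*[X]_{H\mathbb{Z}/2},w_1^n\rangle=\langle[X],v^n\rangle\in\mathbb{Z}/2$. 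The automorphism group of $f^*E\Gamma$ is the centralizer in $\mathbb{Z}/2$ of the monodromy image, which is all of $\mathbb{Z}/2$, so $\sharp\mathrm{Aut}(f^*E\Gamma)=2$. Substituting into Definition~\ref{TQFT_GCH_Zalpha_defn} yields
\[
Z_{w_1^n}(X)=\tfrac{1}{2}\sum_{v\in H^1(X;\mathbb{Z}/2)}\varphi(\langle[X],v^n\rangle).
\]

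Next I would expand the right-hand side of the proposition. Writing $c_k\stackrel{\mathrm{def.}}{=}\langle[X],v_k^n\rangle\in\mathbb{Z}/2$ and using multiplicativity of $\varphi$,
\[
\tfrac{1}{2}\prod_k(1+\varphi(c_k))=\tfrac{1}{2}\sum_{\epsilon\in\{0,1\}^r}\varphi\Bigl(\sum_k\epsilon_k c_k\Bigr),\quad r\stackrel{\mathrm{def.}}{=}\dim_{\mathbb{F}_2}H^1(X;\mathbb{Z}/2).
\]
Through the bijection $\epsilon\mapsto v_\epsilon\stackrel{\mathrm{def.}}{=}\sum_k\epsilon_k v_k$ between $\{0,1\}^r$ and $H^1(X;\mathbb{Z}/2)$, comparing this with the sum formula from the previous paragraph reduces the proposition to the pointwise identity
\[
\langle[X],v_\epsilon^n\rangle=\sum_k\epsilon_k\langle[X],v_k^n\rangle\quad\text{in } \mathbb{Z}/2,
\]
for every $\epsilon\in\{0,1\}^r$.

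The key step, and the only one where the dichotomous hypothesis enters, is verifying this identity. When $n=2^k$, the Frobenius endomorphism (``freshman's dream'' in characteristic two) applied in the graded-commutative ring $H^*(X;\mathbb{Z}/2)$ gives
\[
v_\epsilon^n=\Bigl(\sum_k\epsilon_k v_k\Bigr)^{2^k}=\sum_k\epsilon_k^{2^k}v_k^{2^k}=\sum_k\epsilon_k v_k^n,
\]
using $\epsilon_k\in\{0,1\}$ so that $\epsilon_k^{2^k}=\epsilon_k$; pairing with $[X]$ yields the identity. When $\beta_1(X)=1$ (so $r=1$) the sum over $v$ collapses to the two terms $v=0$ and $v=v_1$, and both sides of the proposition become $\tfrac{1}{2}(1+\varphi(\langle[X],v_1^n\rangle))$ without further input. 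I anticipate only mild bookkeeping obstacles around connectedness and the automorphism count; the substantive content is the Frobenius identity, which is immediate in characteristic two.
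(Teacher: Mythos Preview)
Your proof is correct and follows essentially the same approach as the paper's: both rewrite $2\,Z_{w_1^n}(X)$ as $\sum_{v\in H^1(X;\mathbb{Z}/2)}\varphi(\langle[X],v^n\rangle)$ via the bijection $[X,\mathbb{R}P^\infty]\cong H^1(X;\mathbb{Z}/2)$, then use the Frobenius identity $(\sum_k a_k v_k)^n=\sum_k a_k v_k^n$ when $n=2^k$ to factor the sum into the claimed product, with the $\beta_1(X)=1$ case handled trivially. Your write-up is somewhat more explicit about the automorphism count and the expansion of the product, but the substance is identical.
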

\begin{proof}
$2 \dot Z_{w^n_1} (X) = \sharp \Gamma \cdot Z_{\varphi,\alpha}$ is defined as a sum of $\langle f_\ast [X] , w^n_1 \rangle = \langle [X] , (f^\ast w_1)^n \rangle$ where $f $ ranges over $[X , \mathbb{R}P^{\infty}]$.
If we use the isomorphism $[X , \mathbb{R}P^{\infty} ] \to H^1 (X ; \mathbb{Z}/2) ; [f] \mapsto f^\ast (w_1)$, one sees that $2 \cdot Z_{w^n_1} (X)$  coincides with the sum of $\langle [X], v^n \rangle$ where $v$ ranges over $H^1 (X ; \mathbb{Z}/2)$.
Thus if $\beta_1 (X) =1$, then the claim is true.
If  $n = 2^k$ for some $k$, then we have
\begin{align}
\sum_{v} \varphi (\langle [X] , v^n \rangle )
=
\sum_{a_1,\cdots a_N} \prod_{k} \varphi ( \langle [X] , v^n_k \rangle )^{a_k}
=
\prod_k (1 + \varphi ( \langle [X] , v^n_k \rangle ) ) .
\end{align}
Here, $a_k$ runs over $\mathbb{Z}/2$.
The claim is proved.
\end{proof}

\section{$\hat{\varphi}$ from $\varphi$}
\label{commets_about_rho}

We give some remarks with respect to the symmetric monoidal functor $\hat{\varphi}$.
We put $\hat{\varphi}$.
Note that the symmetric monoidal functor $\hat{\varphi}$ always induces a group homomorphism $\pi_1 (\hat{\varphi}) : G \to M$ by applying the 1st homotopy group of $\hat{\varphi}$.
Here, $G$ is an abelian group $\pi_1 (\hat{G}_{-1}(S^0)) \cong \widetilde{G}_0 (S^0)$.
In this section, we give a sufficient condition that a group homomorphism $\varphi  : G \to M$ naturally induces such a symmetric monoidal functor $\hat{\varphi}$.

\begin{prop}
\label{forrhoconst}
Let $\mathcal{H}$ be a symmetric categorical group.
Set $H$ as the automorphism group $\mathcal{H}(1_\mathcal{H},1_\mathcal{H})$ where $1_{\mathcal{H}}$ is the unit of the symmetric categorical group $\mathcal{H}$.
If $\mathcal{H}$ is 0-connected, then for an object $1_{\mathcal{H}} \in \mathcal{H}$ the follwing assignments form a symmetric monoidal functor :
\begin{align}
\mathbf{T}_{\mathcal{H}} : \mathcal{H} &\to H\mathbf{Tor} \notag \\
a &\mapsto \mathcal{H}(1_{\mathcal{H}},a) \notag \\
(a \stackrel{f}{\to} b) &\mapsto (\mathcal{H}(1_{\mathcal{H}},a) \stackrel{f_\ast}{\to}  \mathcal{H}(1_{\mathcal{H}},b) ) . \notag 
\end{align}
\end{prop}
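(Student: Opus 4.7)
The plan is to verify in order that $\mathbf{T}_\mathcal{H}$ lands in $H\mathbf{Tor}$, is functorial, and carries a compatible symmetric monoidal structure.

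First I would check that $\mathcal{H}(1_\mathcal{H}, a)$ is genuinely an $H$-torsor for every object $a$. In any symmetric categorical group every morphism is invertible, so $H = \mathcal{H}(1_\mathcal{H}, 1_\mathcal{H})$ acts freely on the set $\mathcal{H}(1_\mathcal{H}, a)$ by pre-composition, $g \cdot h := g \circ h$. Transitivity (and non-emptiness) is exactly where the 0-connectedness hypothesis enters: it supplies at least one morphism $g_0 : 1_\mathcal{H} \to a$, and for any other $g : 1_\mathcal{H} \to a$ the element $h := g_0^{-1} \circ g \in H$ satisfies $g_0 \cdot h = g$; freeness is immediate from cancellation. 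Post-composition with a morphism $f : a \to b$ is $H$-equivariant by associativity, so it defines a morphism of $H$-torsors. Functoriality $(\mathrm{id}_a)_* = \mathrm{id}$ and $(f' \circ f)_* = f'_* \circ f_*$ are immediate from the axioms of $\mathcal{H}$.

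Next I would equip $\mathbf{T}_\mathcal{H}$ with a symmetric monoidal structure. For objects $a, b \in \mathcal{H}$, define a coherence map
\begin{align*}
\mu_{a,b} : \mathcal{H}(1_\mathcal{H}, a) \otimes_H \mathcal{H}(1_\mathcal{H}, b) \to \mathcal{H}(1_\mathcal{H}, a \otimes b)
\end{align*}
on representatives by $(g, g') \mapsto (g \otimes g') \circ \lambda^{-1}$, where $\lambda$ denotes the unitor $1_\mathcal{H} \otimes 1_\mathcal{H} \to 1_\mathcal{H}$ of $\mathcal{H}$. Well-definedness on the quotient $\otimes_H$ rests on the interchange law for $\otimes$ together with naturality of $\lambda$ and the Eckmann--Hilton identity $(m \otimes m') \circ \lambda^{-1} = \lambda^{-1} \circ (m m')$, valid in any symmetric monoidal category; these identities absorb the $H$-action on the product torsor into the unitor composite. $H$-equivariance of $\mu_{a,b}$ is then direct, and the unit constraint is the identity $H \to \mathcal{H}(1_\mathcal{H}, 1_\mathcal{H})$.

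Finally I would verify the associativity (pentagon), unitality (triangle), and braiding (hexagon) axioms for $\mathbf{T}_\mathcal{H}$. Each reduces by a direct diagram chase to the corresponding coherence axiom of $\mathcal{H}$, after unravelling the unitor insertions; symmetry of the braiding on $H\mathbf{Tor}$, which is just the flip of torsors since $H$ is abelian, transfers from the braiding of $\mathcal{H}$. The step I expect to take the most care is the well-definedness of $\mu_{a,b}$ on $\otimes_H$ and the compatibility of the resulting associator with the torsor associator on $H\mathbf{Tor}$, because one has to track the $H$-action on each tensor factor separately through the monoidal product of $\mathcal{H}$ and see that both descend consistently to the identification that defines the tensor of $H$-torsors. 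Once this compatibility is established, the remaining coherence diagrams collapse formally.
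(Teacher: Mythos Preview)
Your proposal is correct and follows essentially the same route as the paper: show $\mathcal{H}(1_\mathcal{H},a)$ is an $H$-torsor using $0$-connectedness, check functoriality, and build the monoidal coherence maps out of the monoidal product of morphisms together with the unitor. The only cosmetic difference is that the paper defines the $H$-action on $\mathcal{H}(1_\mathcal{H},a)$ via the monoidal product (through $\mathcal{H}(1_\mathcal{H},1_\mathcal{H})\times\mathcal{H}(1_\mathcal{H},a)\to\mathcal{H}(1_\mathcal{H}\otimes 1_\mathcal{H},1_\mathcal{H}\otimes a)\cong\mathcal{H}(1_\mathcal{H},a)$) rather than by pre-composition; the two actions agree by Eckmann--Hilton, and your composition-based argument makes freeness and equivariance slightly more immediate.
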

\begin{proof}
The assignment form a functor $\mathcal{H} \to \mathbf{Set}$ obviously.
Let us determine a $H$-torsor structure of $\mathbf{T}_{\mathcal{H}} (a) = \mathcal{H}(1_{\mathcal{H}} ,a)$ for $a \in \mathcal{H}$.
Due to the monoidal structure of $\mathcal{H}$, we have a map $(g,x) \mapsto g\cdot x$ defined by the following compostions.
\begin{align}
H \times\mathcal{H}(1_{\mathcal{H}} ,a) = \mathcal{H}(1_{\mathcal{H}} , 1_{\mathcal{H}}) \times \mathcal{H}(1_{\mathcal{H}} ,a) \to \mathcal{H}(1_{\mathcal{H}} \otimes 1_{\mathcal{H}} , 1_{\mathcal{H}} \otimes a ) \cong \mathcal{H}(1_{\mathcal{H}} ,a)\notag
\end{align}
Then the coherence of structure morphisms of $\mathcal{H}$ shows that the correspondence $(g,x) \mapsto g\cdot x$ gives an action of $H$ on the set $\mathbf{T}_{\mathcal{H}} (a)$.
We show that this action is free.
If $g \cdot x = x$, then we have the following commutative diagram in $\mathcal{H}$ :
$$
\begin{tikzcd}
1_{\mathcal{H}} \ar[r, "x"] & a & 1_{\mathcal{H}} \ar[l, "x"'] \\
1_{\mathcal{H}} \otimes 1_{\mathcal{H}} \ar[r, "id \otimes x"] \ar[u, "\cong"] & 1_{\mathcal{H}} \otimes a \ar[u, "\cong"] & 1_{\mathcal{H}} \otimes 1_{\mathcal{H}} \ar[l, "g\otimes x"'] \ar[u, "\cong"]
\end{tikzcd}
$$
Hence we obtain $g = id_{1_{\mathcal{H}}} \in \mathcal{H}(1_{\mathcal{H}} , 1_{\mathcal{H}} )$ due to the coherence of structure morphisms.

On the one hand, since $\mathcal{H}(1_{\mathcal{H}} , a)$ is not empty, there exists an isomorphism $\mathcal{H}(1_{\mathcal{H}} , a)\cong \mathcal{H}(1_{\mathcal{H}},1_{\mathcal{H}})$.
Then since the action of $H$ on $\mathcal{H}(1_{\mathcal{H}}, a) \cong \mathcal{H}(1_{\mathcal{H}} ,1_{\mathcal{H}})$ is free, the $H$-set $\mathcal{H}(1_{\mathcal{H}},a)$ is a $H$-torsor.

Finally, we have natural isomorphisms of $H$-torsors by the structure morphisms :
\begin{align}
\mathcal{H}(a_0, a_1) \otimes_{H} \mathcal{H}(b_0 , b_1) \cong \mathcal{H}(a_0 \otimes b_0 , a_1 \otimes b_1) , \notag
\end{align}
where $\otimes_{H}$ denotes the tensor product of $H$-torsors.
Due to this natural isomorphism, $\mathbf{T}_{\mathcal{H}}$ becomes a symmetric monoidal functor.
\end{proof}

Consider the symmetric categorial group $\mathcal{H} = \hat{G}_{-1}(S^0)$.
As a corollary, we have a sufficient condition to obtain a symmetric monoidal functor $\hat{\varphi} : \mathcal{H} \to M\mathbf{Tor}$ starting form a group homomorphism $\varphi : H \to M$ where $H$ is the group $\pi_1 (\mathcal{H} ) \cong \tilde{G}_0 (S^0) $.

\begin{Corollary}
Let $\varphi : G_0(\mathrm{pt})  \to M$ be a group homomorphism.
Suppose that the underlying groupoid of $\hat{G}_{-1}(S^0)$ is 0-connected, i.e. $\pi_0 (\hat{G}_{-1}(S^0)) \cong \pi_{-1}(G) \cong 0$.
Denote by $\times_{\varphi} M : G\mathbf{Tor} \to M\mathbf{Tor}$ the symmetric monoidal functor given by the asscoiated bundle construction.
Then $\varphi$ induces a symmetric monoidal functor $\hat{\varphi} : \hat{G}_{-1}(S^0) \to M\mathbf{Tor}$ via compostions of $\times_{\varphi}M$ and $\mathbf{T}_{\hat{G}_{-1}(S^0)}$ in Propostion \ref{forrhoconst}.
\end{Corollary}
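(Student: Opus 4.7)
The plan is to apply Proposition~\ref{forrhoconst} directly to the symmetric categorical group $\mathcal{H} = \hat{G}_{-1}(S^0)$. By the standing hypothesis, the underlying groupoid of $\hat{G}_{-1}(S^0)$ is $0$-connected, which is precisely the condition that Proposition~\ref{forrhoconst} requires. That proposition then yields a symmetric monoidal functor
\begin{align}
\mathbf{T}_{\hat{G}_{-1}(S^0)} : \hat{G}_{-1}(S^0) \to G\mathbf{Tor},
\end{align}
where the group $G$ arises as $\mathrm{Aut}(1_{\hat{G}_{-1}(S^0)}) \cong \pi_1 (\hat{G}_{-1}(S^0)) \cong \tilde{G}_0(S^0) \cong G_0(\mathrm{pt})$. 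So the first step is essentially free once one checks that the identification of the automorphism group of the unit with $G_0(\mathrm{pt})$ is the expected one.

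The second step is to verify that the associated bundle construction $\times_{\varphi} M : G\mathbf{Tor} \to M\mathbf{Tor}$ is a symmetric monoidal functor. On objects it sends a $G$-torsor $T$ to $T \times_{\varphi} M \stackrel{\mathrm{def.}}{=} (T \times M)/\sim$, where $(tg, m) \sim (t, \varphi(g)m)$, and on morphisms $T \to T^\prime$ it acts by the evident quotient of $T \times M \to T^\prime \times M$. Functoriality and preservation of the unit are immediate from $\varphi(1) = 1$. The key structural map is the natural $M$-equivariant isomorphism
\begin{align}
(T_0 \times_{\varphi} M) \otimes_{M} (T_1 \times_{\varphi} M) \stackrel{\cong}{\longrightarrow} (T_0 \otimes_{G} T_1) \times_{\varphi} M,\quad [t_0,m_0]\otimes[t_1,m_1] \mapsto [t_0\otimes t_1, m_0 m_1],
\end{align}
whose well-definedness is exactly the multiplicativity of $\varphi$. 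Compatibility with the associator and symmetry is then a routine diagram check, and similarly for the unit constraint.

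The third step is to form the composition $\hat{\varphi} \stackrel{\mathrm{def.}}{=} (\times_{\varphi} M) \circ \mathbf{T}_{\hat{G}_{-1}(S^0)}$, which is a symmetric monoidal functor as a composition of such. To conclude the claimed compatibility, one checks that $\pi_1 (\hat{\varphi}) = \varphi$ by unwinding the definitions: $\mathbf{T}_{\hat{G}_{-1}(S^0)}$ identifies $\pi_1$ of the source with $G$ tautologically (an element $g \in G$ acts on the trivial torsor $\mathcal{H}(1_{\mathcal{H}}, 1_{\mathcal{H}})$ by left multiplication by $g$), and $\times_{\varphi} M$ acts on automorphisms of the trivial torsor exactly by $\varphi$.

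The argument is essentially all bookkeeping: the only mild obstacle is verifying the coherence diagrams in step two, in particular that the monoidal isomorphism displayed above is natural and satisfies the pentagon and hexagon axioms. No new conceptual ingredient beyond Proposition~\ref{forrhoconst} is required.
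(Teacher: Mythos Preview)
Your proposal is correct and follows exactly the route the paper intends: the Corollary is stated without proof in the paper, as it is meant to follow immediately from Proposition~\ref{forrhoconst} by composing $\mathbf{T}_{\hat{G}_{-1}(S^0)}$ with the associated bundle functor $\times_{\varphi} M$. Your verification that $\times_{\varphi} M$ is symmetric monoidal and that $\pi_1(\hat{\varphi}) = \varphi$ is more detail than the paper supplies, but it is precisely the content implicit in the statement.
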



\section{$KK$ categorical group}
\label{section_categorical_KK_theory}

In this section, we construct a categorical group version of $KK$-theory as the fundamental groupoid of a space of quasi-homomorphisms between C*-algebras where its 0-th homotopy set gives $KK$-theory upto isomorphism.
The abelian group structure on $KK$-theory is lifted to that groupoid as a symmetric categorical group structure.

In subsection \ref{KKtheory_scg_prelimi}, we introduce two simplicial sets formed by homomorphisms of C*-algebras and by quasi-homomorphisms of C*-algebras.
In subsection \ref{Categorical_KK}, we define a categorical group version of $KK$-theory as the fundamental groupoid of the simplicial set of quasi-homomorphisms.

\subsection{Preliminaries}
\label{KKtheory_scg_prelimi}

Let $A,B$ be C*-algebras.
A map $f : A \to B$ is a homomorphism if it is a $*$-homomorphism of $*$-algebras $A,B$.
We denote by $hom (A,B)$ the set of homomorphisms from $A$ to $B$.
In this section, we introduce a simplicial set structure `on' the set of homomorphisms $hom(A,B)$.

\begin{Defn}
\label{thick_C}
We define a simplicial set $HOM (A,B)$ for C*-algebras $A,B$.
For $n \in \mathbb{Z}^{\geq 0}$, we define a set of $n$-simplices of $HOM(A,B)$ as follows :
\begin{align}
HOM (A,B)_n \stackrel{\mathrm{def.}}{=} hom (A, C(\triangle (n) , B)) . \notag
\end{align}
Then the face maps and the degeneracy maps between simplices $\triangle (n)$'s induce a simplicial C*-algebra $C(\triangle (n) , B)$'s and a simplicial set $hom (A, C(\triangle^\bullet , B)) = HOM(A,B)$.

\end{Defn}

Then the composition $\circ : hom (A,B) \times hom(B,C) \to hom(A,C)$ and the tensor product $\otimes : hom(A_0, A_1 ) \times hom (B_0 , B_1 ) \to hom (A_0 \otimes B_0 , A_1 \otimes B_1)$ are lifted as simplicial maps between $HOM(-,-)$.
In other words, we have simplicial maps defined by the pointwise composition and the pointwise tensor product.
\begin{align}
&\circ : HOM(A,B) \times HOM(B,C) \to HOM(A,C) , \\
&\otimes : HOM(A_0 , A_1) \times HOM(B_0 ,B_1) \to HOM(A_0 \otimes B_0 , A_1 \otimes B_1) .
\end{align}
Here, the left hand side denotes the direct product of simplicial sets.

We introduce a notation for iterated Cuntz algebra $q^M A$ for a C*-algebra $A$ following M. Joachim and S. Stolz \cite{JS}.
\begin{Defn}
\label{cuntz_iter}
For a C*-algebra $A$ and a finite set $M$, we denote by $Q^M A$ a free product of copies of the C*-algebra $A$ indexed by the power set of $M$ : 
\begin{align}
 \Asterisk_{K \subset M} A . \notag
\end{align}
The iterated Cuntz algebra $q^M A$ is defined as the ideal in $Q^M A$ generated by the elements
\begin{align}
q^M (a) \stackrel{\mathrm{def.}}{=} \sum_{K \subset M} (-1)^{\sharp K} a^K  \in Q^M A ~,~~~a \in A . \notag
\end{align}
\end{Defn}

Let us fix a countably infinite set $\mathbf{U}$.
We construct a direct system $(q^M A , \pi_{M,N};M \subset \mathbf{U} , \sharp M < + \infty)$ for a C*-algebra $A$.
Let $N \subset M$ be finite subsets in the set $\mathbf{U}$.
Let $\pi^\prime_{M,N} : Q^M A \to Q^N A$ be the homomorphism induced by $a^K \mapsto a^K$ if $K\subset N$ and $a^K \mapsto 0$ otherwise.
It induces a homomorphism $\pi_{M,N} : q^M A \to q^N A$.

On the one hand, using a rank-one projection $p \in \mathcal{K}$ we define a direct system $(\mathcal{K}^M \otimes B , p^{M\backslash N} \otimes (-) ;M \subset \mathbf{U},\sharp M < +\infty )$.
Here $\mathcal{K}$ is the C*-algebra formed by compact oeprators on a fixed countable Hilbert space which is infinite dimensional.
It determines a homomorphism $p^{M\backslash N} \otimes (-) : \mathcal{K}^N \otimes B \to \mathcal{K}^M \otimes B$ where $p^{M\backslash N}$ is the projection obtained by taking tensor products of copies of the projection $p$ indexed by the set $M\backslash N$.

The above direct systems induce a direct system $(HOM(q^M A, \mathcal{K}^M \otimes B) , j_{M,N}  ; M \subset \mathbf{U} , \sharp M < + \infty )$ where $j_{M,N} : HOM(q^N A, \mathcal{K}^N \otimes B) \to HOM(q^M A, \mathcal{K}^M \otimes B)$ is defined as follows :
$$
\begin{tikzcd}
HOM(q^N A, \mathcal{K}^N \otimes B) \ar[d, "(p^{M\backslash N}\otimes (-))_\ast"'] & \\
HOM(q^N A, \mathcal{K}^M \otimes B) \ar[r, "\pi_{MN}^\ast" ] &
HOM(q^M A, \mathcal{K}^M \otimes B)
\end{tikzcd}
$$

\begin{Defn}
\label{QAB}
For C*-algebras $A,B$, we define a based simplicial set $Q (A,B)$ satisfying the Kan condition as :
\begin{align}
Q (A,B) \stackrel{\mathrm{def.}}{=}\varinjlim_{M} HOM ( q^M A, \mathcal{K}^M \otimes B  ) . \notag
\end{align}
It is well-defined since every direct system in the category of simplicial sets satisfying the Kan condition has a direct limit.
\end{Defn}

We define tensor product $\otimes : Q(A_0, A_1) \times Q(B_0 , B_1) \to Q(A_0 \otimes B_0 , A_1 \otimes B_1)$ as a simplicial map.
Let $M,N$ be finite subsets of the set $\mathbf{U}$.
It is obtained from simplicial maps which is compatible with each direct system,
\begin{align} 
\otimes : HOM(q^M A_0 , \mathcal{K}^M \otimes A_1 ) \times & HOM(q^N B_0 , \mathcal{K}^N \otimes B_1) \notag \\
&\to HOM(q^{M\amalg N}(A_0 \otimes B_0) , \mathcal{K}^{M \amalg N}(A_1 \otimes B_1) ) \notag
\end{align}
as compositions of the following simplicial maps :
\begin{align}
 &~~~~HOM(q^M A_0 , \mathcal{K}^M \otimes A_1 ) \times HOM(q^N B_0 , \mathcal{K}^N \otimes B_1) \\
 &\stackrel{\otimes}{\to}
 HOM(q^M A_0 \otimes q^N B_0 ,( \mathcal{K}^M \otimes A_1 ) \otimes (\mathcal{K}^N \otimes B_1)) \\
 &\stackrel{\cong}{\to}
 HOM(q^M A_0 \otimes q^N B_0 , \mathcal{K}^{M\amalg N} \otimes B) \\
 &\to
 HOM(q^{M\amalg N}(A_0 \otimes B_0) , \mathcal{K}^{M\amalg N} \otimes B)
\end{align}
Here the final simplicial map is induced by the canonical homomorphism $q^{M\amalg N}(A_0 \otimes B_0) \to q^M A_0 \otimes q^N B_0$.

In a similar way, we construct a composition $\otimes_D : Q(A,D) \times Q(D,B) \to Q(A,B)$ as a simplicial map.
For finite subsets $M,N$ in the set $\mathbf{U}$, we define a simplicial map 
\begin{align}
 \otimes_D : HOM (q^M A, \mathcal{K}^M \otimes D) \times HOM ( q^N D, \mathcal{K}^N \otimes B) \to HOM (q^{M\amalg N}A, \mathcal{K}^{M\amalg N}\otimes B) \notag
\end{align}
as compositions of the following simplicial maps :
\begin{align}
&~~~~~~HOM (q^M A, \mathcal{K}^M \otimes D) \times HOM ( q^N D, \mathcal{K}^N \otimes B) \\
&\stackrel{q^N \times (\mathcal{K}^M \otimes (-))}{\to} 
HOM (q^N(q^M A), q^N(\mathcal{K}^M \otimes D)) \times HOM ( \mathcal{K}^M \otimes q^N D, \mathcal{K}^M\otimes (\mathcal{K}^N \otimes B)) \\
&\stackrel{\chi^{MN}}{\to}
HOM (q^N(q^M A), \mathcal{K}^M \otimes q^N D) \times HOM ( \mathcal{K}^M \otimes q^N D, \mathcal{K}^M\otimes (\mathcal{K}^N \otimes B)) \\
&\stackrel{\circ}{\to}
HOM (q^N(q^M A), \mathcal{K}^M \otimes (\mathcal{K}^N \otimes B)) \\
&\stackrel{\cong}{\to}
 HOM (q^{M\amalg N}A, \mathcal{K}^{M\amalg N}\otimes B)
\end{align}
The second simplicial map $\chi^{MN}$ is induced by the canonical homomorphism $q^N(\mathcal{K}^M \otimes D) \to \mathcal{K}^M \otimes q^N D$.

\subsection{Construction}
\label{Categorical_KK}

In this section, we introduce a categorical group version of $KK$-theory.
Its underlying groupoid $\mathcal{KK}(A,B)$ is defined as the fundamental groupoid of the simplicial set $Q(A,B)$ for C*-algebras $A,B$.
Each element of $z \in \mathbf{U}$ induces a categorical group structure on the groupoid where $\mathbf{U}$ is used to define $Q(A,B)$.
The composition and the tensor product are defined.
We explain how $\mathcal{KK}(A,B)$ and $KK(A,B)$ are related to each other.

In subsection \ref{categorical cohomology group}, we introduce a categorical group version of generalized (co)homology theory.
Its monoidal structure essentially comes from the fact that the pinch map $S^n \to S^n \vee S^n$ gives a comultiplication on $S^n$.
In analogy to it, we shall introduce a monoidal structure on $\mathcal{KK}(A,B)$ using the fact that a homomorphism $\mathcal{K} \oplus \mathcal{K} \to \mathcal{K}$ gives a multiplication on $\mathcal{K}$ in some sense (see Proposition \ref{K_sym_monoid}).
Here, $\mathcal{K}$ is a C*-algebra formed by compact operators on a fixed infinite dimensional separable Hilbert space.

The assignment $(A,B ) \mapsto \Pi_1 HOM(A,B) $ for arbitrary C*-algebras $A,B$ induces a 2-category $\Pi_1 \mathbf{C}^\ast_{\bullet}$ :
The 2-category $\Pi_1\mathbf{C}^\ast_{\bullet}$ consists of the following data subject to the axioms of 2-categories :
\begin{enumerate}
\item
The class of objects consists of C*-algebras.
\item
For two objects $A,B$, a groupoid $\Pi_1 HOM(A,B)$ is given, which is the collection of morphisms.
$\Pi_1 HOM(A,B)$ denotes the fundamental groupoid of the simplicial set $HOM(A,B)$ satisfying the Kan condition.
\item
For three objects $A,B,D$, there is a functor which gives the composition :
\begin{align}
\circ= \circ_{A,D,B} : \Pi_1 HOM(A,D) \times \Pi_1 HOM(D,B) \to \Pi_1 HOM(A,B) .\notag
\end{align}
\item
For an object $A$, there is a simplicial map $1_A : \ast \to \Pi_1 HOM(A,A)$ which gives the identity with respect to the above compostion.
Here, $\ast$ is a fixed one-point groupoid.
\end{enumerate}

\begin{prop}
\label{K_sym_monoid}
Let $\mathcal{K}$ be the C*-algebra of compact operators on an infinite dimensional separable Hilbert space $\mathbf{H}$.
Let us consider $\mathcal{K}$ as an object of $\Pi_1 \mathbf{C}^\ast_{\bullet}$.
A unitary isomorphism $\mathbf{H} \oplus \mathbf{H} \cong \mathbf{H}$ of separable Hilbert spaces gives $\mathcal{K}$ a structure of symmetric monoid in $(\Pi_1 \mathbf{C}^\ast_{\bullet} , \oplus )$.
Here $(\Pi_1 \mathbf{C}^\ast_{\bullet} , \oplus )$ is considered as the symmetric monoid in the 2-category $\mathbf{2Cat}$.
\end{prop}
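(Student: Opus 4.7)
\emph{Proof proposal.} The plan is to exhibit $\mathcal{K}$ together with a multiplication $1$-morphism, a unit $1$-morphism, and the invertible coherence $2$-cells of a symmetric pseudomonoid in $(\Pi_1\mathbf{C}^\ast_\bullet,\oplus)$, and then to reduce the coherence axioms to the contractibility of the norm-unitary group $U(\mathbf{H})$ via Kuiper's theorem.

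First I would define the $1$-morphisms. A unitary isomorphism $u:\mathbf{H}\oplus\mathbf{H}\to\mathbf{H}$ induces an inner *-isomorphism $\mathrm{Ad}(u):\mathcal{K}(\mathbf{H}\oplus\mathbf{H})\to\mathcal{K}(\mathbf{H})=\mathcal{K}$. Precomposing with the block-diagonal embedding $\mathcal{K}\oplus\mathcal{K}\hookrightarrow\mathcal{K}(\mathbf{H}\oplus\mathbf{H})$, $(a,b)\mapsto a\oplus b$, defines the multiplication *-homomorphism
\begin{align}
\mu=\mu_u:\mathcal{K}\oplus\mathcal{K}\to\mathcal{K},\qquad (a,b)\mapsto u(a\oplus b)u^\ast.
\end{align}
The unit $\eta:\{0\}\to\mathcal{K}$ is the unique (zero) *-homomorphism from the zero C*-algebra, which serves as the monoidal unit of $\oplus$ on $\mathbf{C}^\ast_\bullet$.

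Next I would construct the coherence $2$-cells. The two iterated multiplications $\mu\circ(\mu\oplus\mathrm{id})$ and $\mu\circ(\mathrm{id}\oplus\mu):\mathcal{K}^{\oplus 3}\to\mathcal{K}$, composed with the canonical strict associator of $\oplus$, are of the form $(a,b,c)\mapsto w(a\oplus b\oplus c)w^\ast$ for two unitaries $w_L,w_R:\mathbf{H}^{\oplus 3}\to\mathbf{H}$. Kuiper's theorem asserts that $U(\mathbf{H})$ is contractible in the norm topology, hence path-connected, so there is a norm-continuous path $\{w_t\}_{t\in[0,1]}$ of unitaries from $w_L$ to $w_R$. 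Viewing this path as a $1$-simplex in $HOM(\mathcal{K}^{\oplus 3},\mathcal{K})$ via $c\mapsto(t\mapsto w_t c w_t^\ast)$ yields the associator $\alpha$ in $\Pi_1 HOM(\mathcal{K}^{\oplus 3},\mathcal{K})$. The unitors $\lambda,\rho$ are built in the same way, after observing that $\mu\circ(\eta\oplus\mathrm{id})$ is conjugation by the restriction of $u$ to a direct summand (an isometry $\mathbf{H}\to\mathbf{H}$) and that the space of isometries of $\mathbf{H}$ into itself is also contractible. The symmetry $\sigma$ comes from a path of unitaries joining $u$ to $u\circ\mathrm{swap}:\mathbf{H}\oplus\mathbf{H}\to\mathbf{H}$, whose existence again follows from Kuiper's theorem applied to $\mathbf{H}^{\oplus 2}$.

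Finally, the coherence axioms (pentagon, triangle, and the symmetric-monoid hexagons) must be checked. In each diagram, after unwinding, the two parallel pastings of the above $2$-cells become two paths with common endpoints in a single connected component of $HOM(\mathcal{K}^{\oplus n},\mathcal{K})$, and their difference is traced out by a loop of unitaries $\mathbf{H}^{\oplus n}\to\mathbf{H}$. Since $U(\mathbf{H})$ is contractible, every such loop is null-homotopic, so the two composites agree as morphisms in the fundamental groupoid and each axiom holds. The main obstacle I anticipate is the combinatorial bookkeeping: carefully matching the canonical associator and symmetry of $\oplus$ on $\mathbf{C}^\ast_\bullet$ against the chosen unitaries and verifying that the various pasting diagrams assemble correctly. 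Once that is set up, all genuine analytic content is absorbed into Kuiper's theorem.
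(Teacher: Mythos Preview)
Your proposal is essentially the paper's own argument: define $\mu$ via $\mathrm{Ad}(u)$ on the block-diagonal embedding, take the zero map as unit, and obtain all coherence $2$-cells and axioms from connectivity and simple-connectivity of the relevant spaces of unitaries via Kuiper's theorem. One point deserves care: for the unitors you invoke contractibility of the space of isometries of $\mathbf{H}$, but in the \emph{norm} topology that space is not even connected (a proper isometry $V$ has $1-VV^\ast$ a nonzero projection, and nonzero projections are norm-separated from $0$), so you must pass to the strong operator topology there, as the paper does, and then check that a strongly continuous path $V_t$ of isometries still yields a norm-continuous path $t\mapsto V_t a V_t^\ast$ for compact $a$.
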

\begin{proof}
Let $w : \mathbf{H} \oplus \mathbf{H} \cong \mathbf{H}$ be a unitary isomorphism.
It induces two unitary isomorphisms $w_{((12)3)} , w_{(1(23))} : \mathbf{H} \oplus \mathbf{H} \oplus \mathbf{H} \to \mathbf{H}$ according to the order of applying $w : \mathbf{H} \oplus \mathbf{H} \to \mathbf{H}$, for example, $w_{((12)3)} $ is defined as follows :
\begin{align}
\mathbf{H} \oplus \mathbf{H} \oplus \mathbf{H} \stackrel{w \oplus id_{\mathbf{H}}}{\to} \mathbf{H} \oplus \mathbf{H} \stackrel{w}{\to} \mathbf{H} \notag
\end{align}
Then there is a continuous path $\gamma (t)$ in the space of unitary isomorphisms from $\mathbf{H} \oplus \mathbf{H} \oplus \mathbf{H} $ to $\mathbf{H}$ under the norm topology such that $\gamma (0) = w_{((12)3)}$ and $\gamma (1) = w_{(1(23))}$.

Note that the unitary isomorphism $w$ induces a homomorphism :
\begin{align}
\mu : \mathcal{K}\oplus \mathcal{K} \stackrel{j}{\to} M_2 (\mathcal{K}) \cong \mathcal{K}(\mathbf{H} \oplus \mathbf{H}) \stackrel{Ad(w)}{\to} \mathcal{K} \notag
\end{align}
Here $j (x,y) = \begin{pmatrix} x & 0 \\ 0 & y \end{pmatrix}$.
We consider the homomorphism $\mu$ as a morphism in the category $\Pi_1 \mathbf{C}^\ast_{\bullet}$.
Then the continuous path $\gamma (t)$ chosen above gives an associator for $\mu$.
In fact, $\gamma (t)$ gives a path in the simplicial set  $HOM(\mathcal{K}\oplus \mathcal{K} \oplus \mathcal{K}, \mathcal{K})$ which connects $\mu_{((12)3}$ and $\mu_{(1(23)}$.
It determines a morphism in $\Pi_1 HOM(\mathcal{K}\oplus \mathcal{K} \oplus \mathcal{K}, \mathcal{K})$ which connects its objects $\mu_{((12)3}$ and $\mu_{(1(23)}$.
We denote it as $\alpha$.
Note that $\alpha$ does not depend on the choice of $\gamma$ since for two different $\gamma (t)$, $\gamma^\prime (t)$ they are homotopic to each other preserving their boundary values due to the Kuiper's theorem.
Then $\alpha : \mu_{((12)3)} \to \mu_{(1(23))}$ satisfies the pentagon identity due to Kuiper's theorem again.

We consider the zero homomorphism $0 \to \mathcal{K}$ as a morphism $u: 0\to \mathcal{K}$ in the 2-category $\Pi_1 \mathbf{C}^\ast_{\bullet}$.
From now on we show that $u$ gives a unit for $\mu$.
We take a continuous path $\gamma^\prime (t)$ in the space of isometries on $\mathbf{H}$ under the strong operator topology, from $id_{\mathbf{H}} : \mathbf{H} \to \mathbf{H}$ to the isometry $\mathbf{H} \stackrel{id_{\mathbf{H}} \oplus 0}{\to}  \mathbf{H} \oplus \mathbf{H} \stackrel{w}{\to} \mathbf{H}$.
We claim that $\gamma^\prime (t)$ induces a right unit cancellaration for $\mu$.
In fact, via its adjoint homomorphism, $\gamma^\prime (t)$ induces a path in the simplicial set $HOM(\mathcal{K}, \mathcal{K})$ which connects $id_{\mathcal{K}}$ to $\mu \circ (id_{\mathcal{K}} \oplus u)$.
It determines a morphism in $\Pi_1 HOM(\mathcal{K}, \mathcal{K})$ which connects its objects $id_{\mathcal{K}}$ to $\mu \circ (id_{\mathcal{K}} \oplus u)$.
We denote the morphism as $r$.
Similarly, we define a left cancellaration $l$ which is a morphism in $\Pi_1 HOM(\mathcal{K}, \mathcal{K})$ which connects its objects $id_{\mathcal{K}}$ to $\mu \circ (u\oplus id_{\mathcal{K}})$.
\end{proof}

\begin{Corollary}
A unitary isomorphism $\mathbf{H} \oplus \mathbf{H} \cong \mathbf{H}$ gives the groupoid $\Pi_1 HOM(A,B \otimes \mathcal{K})$ symmetric monoidal groupoid structure naturally.
\end{Corollary}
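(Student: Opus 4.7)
The plan is to transport the symmetric monoid structure on $\mathcal{K}$ in $(\Pi_1 \mathbf{C}^\ast_\bullet, \oplus)$ supplied by Proposition \ref{K_sym_monoid} through two 2-functors, and then recognize the resulting symmetric monoid in the 2-category of groupoids as a symmetric monoidal groupoid.

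First I would note that tensoring with $B$ defines a strict 2-endofunctor $B \otimes (-) : \Pi_1 \mathbf{C}^\ast_\bullet \to \Pi_1 \mathbf{C}^\ast_\bullet$ which preserves $\oplus$ up to the canonical isomorphism $B \otimes (C \oplus D) \cong (B \otimes C) \oplus (B \otimes D)$. Applying it to the symmetric monoid $(\mathcal{K}, \mu, u, \alpha, l, r, \sigma)$ of Proposition \ref{K_sym_monoid} endows $B \otimes \mathcal{K}$ with a symmetric monoid structure in $(\Pi_1 \mathbf{C}^\ast_\bullet, \oplus)$, with multiplication
\begin{align}
\tilde{\mu} : (B \otimes \mathcal{K}) \oplus (B \otimes \mathcal{K}) \cong B \otimes (\mathcal{K} \oplus \mathcal{K}) \stackrel{id_B \otimes \mu}{\to} B \otimes \mathcal{K} \notag
\end{align}
and unit $0 \to B \otimes \mathcal{K}$ induced by $u$, the associator, unitors, and symmetry being obtained by tensoring $\alpha, l, r, \sigma$ with $id_B$.

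Next I would apply the 2-functor $\Pi_1 HOM(A, -) : \Pi_1 \mathbf{C}^\ast_\bullet \to \mathbf{Gpd}$. The crucial observation is that this 2-functor carries $\oplus$ to the cartesian product $\times$ of groupoids: since $C \oplus D$ is the categorical product in C*-algebras, at each simplicial level we have $hom(A, C(\triangle(n), C \oplus D)) \cong hom(A, C(\triangle(n), C)) \times hom(A, C(\triangle(n), D))$, hence $HOM(A, C \oplus D) \cong HOM(A, C) \times HOM(A, D)$ as simplicial sets, and $\Pi_1$ preserves finite products of Kan complexes. A symmetric monoid in $(\Pi_1 \mathbf{C}^\ast_\bullet, \oplus)$ is therefore sent to a symmetric monoid in $(\mathbf{Gpd}, \times)$, which is precisely the data of a symmetric monoidal groupoid. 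Concretely, the resulting tensor product on $\Pi_1 HOM(A, B \otimes \mathcal{K})$ takes two homomorphisms $f, g : A \to B \otimes \mathcal{K}$ to the composite
\begin{align}
A \stackrel{\Delta_A}{\to} A \oplus A \stackrel{f \oplus g}{\to} (B \otimes \mathcal{K}) \oplus (B \otimes \mathcal{K}) \stackrel{\tilde{\mu}}{\to} B \otimes \mathcal{K} , \notag
\end{align}
where $\Delta_A : A \to A \oplus A$, $a \mapsto (a,a)$, is the C*-algebraic diagonal; the unit object is $A \to 0 \to B \otimes \mathcal{K}$, and the higher coherence 2-cells come from those of $\mathcal{K}$. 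Naturality in $A$ and $B$ is immediate from the 2-functoriality of $HOM(-,-)$ and of the tensor product.

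The main obstacle is bookkeeping rather than substance: one must check that $\Pi_1 HOM(A, -)$ really is a strong monoidal 2-functor from $(\Pi_1 \mathbf{C}^\ast_\bullet, \oplus)$ to $(\mathbf{Gpd}, \times)$, including compatibility of the isomorphism $HOM(A, C \oplus D) \cong HOM(A, C) \times HOM(A, D)$ with the associators, unitors, and symmetries of $\oplus$ and $\times$. This reduces to naturality of that simplicial isomorphism (clear, since both sides are computed levelwise from the same universal property) together with the general fact that any finite-product-preserving 2-functor automatically carries (symmetric) monoid objects to (symmetric) monoid objects, pulling back pentagon and hexagon identities along the coherence isomorphism.
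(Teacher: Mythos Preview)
Your proposal is correct and follows essentially the same approach as the paper: both push the symmetric monoid structure $(\mu,\alpha,l,r)$ on $\mathcal{K}$ forward along $id_B\otimes(-)$ and then along $\Pi_1 HOM(A,-)$, using that the latter takes $\oplus$ to $\times$. The paper simply writes the resulting tensor product directly as the composite $\Pi_1 HOM(A,B\otimes\mathcal{K})\times\Pi_1 HOM(A,B\otimes\mathcal{K})\stackrel{\oplus}{\to}\Pi_1 HOM(A,B\otimes(\mathcal{K}\oplus\mathcal{K}))\stackrel{(id_B\otimes\mu)_\ast}{\to}\Pi_1 HOM(A,B\otimes\mathcal{K})$ and declares that $\alpha,l,r$ supply the coherence, whereas you package the same construction as transport of a monoid object along product-preserving 2-functors; your framing makes the verification of the coherence axioms more transparent but is otherwise identical in content.
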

\begin{proof}
From the object $\mu \in \Pi_1 HOM( \mathcal{K} \oplus \mathcal{K} ,\mathcal{K})$, we obtain a functor :
\begin{align}
\Pi_1 HOM(A,B \otimes \mathcal{K}) \times \Pi_1 HOM(A,B \otimes \mathcal{K}) &\stackrel{\oplus}{\to} \Pi_1 HOM(A,B \otimes (\mathcal{K} \oplus \mathcal{K})) \notag \\
&\stackrel{\mu\circ}{\to} \Pi_1 HOM(A,B \otimes \mathcal{K}) \notag
\end{align}
Then $\alpha : \mu_{((12)3)} \to \mu_{(1(23))}$, $r: \mu\circ (id_{\mathcal{K}} \oplus u) \to id_{\mathcal{K}}$, $l:\mu\circ (u \oplus id_{\mathcal{K}} ) \to id_{\mathcal{K}} $ in the proof of Propostion \ref{K_sym_monoid} induce a structure of symmetric monoidal groupoid on the groupoid $\Pi_1 HOM(A, B\otimes \mathcal{K})$.
\end{proof}

By far, we showed that the fundamental groupoid $\Pi_1 HOM(A, B\otimes \mathcal{K})$ has a symmetric monoidal groupoid structure.
From now on, we will show that if the C*-algebra $A$ is a Cuntz algebra associated with a C*-algebra, then $\Pi_1 HOM(A, B\otimes \mathcal{K})$ becomes a symmetric categorical group.

\begin{Defn}
If $(M,m)$ is a based set, then the C*-algebra $Q^M A$ in Definition \ref{cuntz_iter} has a natural involution $t$ defined as follows.
Then for $K\subset M$, let us define homomorphisms $f_K : A \to QA$ by $a^K \mapsto a^{K\backslash \{m\} }$ if $m \in K$ and $a^K \mapsto a^{K \cup \{ m \}}$ if $m \notin K$.
Then the family $f_K~;~K\subset M$ gives a homomorphism $Q^M A \to Q^M A$.
We define its restriction to $q^M A$ and denote it by $t$, which is obviously an invoultion.
\end{Defn}

\begin{Lemma}
\label{t_homotopic_tozero}
For a homomorphism $f \in hom (qA, B)$, the homomorphism obtained from the following compositions is homtopic to the zero homomorphism.
In particular, we can take a canonical one as such homotopy.
\begin{align}
qA \stackrel{\Delta}{\to} qA \oplus qA \stackrel{f \oplus (f\circ t)}{\to} B \oplus B \stackrel{j}{\to} M_2 (B) \notag
\end{align}

\end{Lemma}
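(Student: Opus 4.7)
The plan is to produce the required homotopy by the classical rotation trick of Cuntz. First I would use the universal property of $QA$ (the free product $A \ast A$) to lift $f \in hom(qA, B)$ to a quasi-homomorphism: a pair $(\phi,\psi)$ of $*$-homomorphisms $A \to \mathcal{M}(B)$ into the multiplier algebra of $B$ satisfying $\phi(a) - \psi(a) \in B$ and $f(q(a)) = \phi(a) - \psi(a)$. Since the involution $t$ exchanges the two canonical inclusions $a \mapsto a^{\emptyset}$ and $a \mapsto a^{\{m\}}$, we have $t(q(a)) = -q(a)$, so $f \circ t$ is the quasi-homomorphism associated to the swapped pair $(\psi,\phi)$.

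Next I would introduce the canonical rotation path of unitaries
\[
u_s \stackrel{\mathrm{def.}}{=} \begin{pmatrix} \cos(s\pi/2) & -\sin(s\pi/2) \\ \sin(s\pi/2) & \cos(s\pi/2) \end{pmatrix} \in M_2(\mathbb{C}), \qquad s \in [0,1],
\]
so that $u_0 = I$ and $u_1$ conjugates $\operatorname{diag}(\psi(a),\phi(a))$ into $\operatorname{diag}(\phi(a),\psi(a))$. Define $\Phi_s : QA \to M_2(\mathcal{M}(B))$ by
\[
\Phi_s(a^{\emptyset}) = \operatorname{diag}(\phi(a),\psi(a)), \qquad \Phi_s(a^{\{m\}}) = u_s\, \operatorname{diag}(\psi(a),\phi(a))\, u_s^{\ast},
\]
and extend via the universal property of $QA$ to a $*$-homomorphism. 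A direct calculation gives
\[
\Phi_s(q(a)) = f(q(a)) \cdot \begin{pmatrix} \cos^2(s\pi/2) & \cos(s\pi/2)\sin(s\pi/2) \\ \cos(s\pi/2)\sin(s\pi/2) & -\cos^2(s\pi/2) \end{pmatrix},
\]
which lies in $M_2(B)$ for every $s$, agrees with $\operatorname{diag}(f(q(a)),\, f(t(q(a))))$ at $s=0$, and vanishes at $s=1$.

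The main technical point will be to upgrade the containment $\Phi_s(q(a)) \in M_2(B)$ from the generators to all of $qA$. This is immediate once both ideal structures are recognised: $qA$ is the two-sided ideal of $QA$ generated by the elements $q(a)$, and $M_2(B)$ is a two-sided ideal of $M_2(\mathcal{M}(B))$; hence $\Phi_s$ sends the entire ideal $qA$ into $M_2(B)$. Continuity of $s \mapsto u_s$ then assembles the family $\{\Phi_s|_{qA}\}_{s\in[0,1]}$ into a single $*$-homomorphism $qA \to C([0,1], M_2(B))$, i.e.\ a $1$-simplex of $HOM(qA, M_2(B))$ realising the required homotopy to the zero homomorphism. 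Because both the lift $(\phi,\psi)$ and the rotation $u_s$ are determined by $f$ without auxiliary choices, the resulting homotopy is canonical, as the statement requires.
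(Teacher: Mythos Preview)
Your rotation-trick idea is exactly the mechanism the paper uses, but your implementation has a genuine gap at the very first step. You assert that an arbitrary $*$-homomorphism $f:qA\to B$ can be lifted to a pair $(\phi,\psi):A\to\mathcal{M}(B)$ with $\phi(a)-\psi(a)=f(q(a))$. This amounts to extending $f$ from the ideal $qA\subset QA$ to a $*$-homomorphism $QA\to\mathcal{M}(B)$, and there is no universal property that produces such an extension; in general it need not exist. Even when a lift does exist it is not unique, and although your formula for $\Phi_s(q(a))$ happens to depend only on $f(q(a))$, the values of $\Phi_s$ on products $x\,q(a)\,y$ with $x,y\in QA$ involve $\Phi_s(x),\Phi_s(y)$ and hence the individual $\phi,\psi$. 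So neither the existence nor the canonicity claim is justified.

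The paper sidesteps this by running the same rotation argument \emph{universally}, i.e.\ with target $M_2(QA)$ rather than $M_2(\mathcal{M}(B))$. One feeds the universal property of $QA$ the two tautological maps $a\mapsto\operatorname{diag}(a^0,a^1)$ and $a\mapsto R(s)\operatorname{diag}(a^0,a^1)R(s)^{\ast}$, obtaining $h_s:QA\to M_2(QA)$ with $h_s(qA)\subset M_2(qA)$; at the endpoints $h_1|_{qA}=x\mapsto\operatorname{diag}(x,t(x))$ and $h_0|_{qA}=0$. This requires no extension of $f$ at all: the desired homotopy for a given $f$ is then simply $M_2(f)\circ h_s|_{qA}$, which is manifestly canonical. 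Your argument becomes correct once you replace $(\phi,\psi)$ by the two canonical embeddings $A\hookrightarrow QA$ and move the rotation to that universal level.
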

\begin{proof}
The claim is equivalent with that $qA \to M_2 (qA) ; x \mapsto \begin{pmatrix} x & 0 \\  0 & t(x) \end{pmatrix}$ is homotopic to the zero homomorphism.
For $t \in I$, homomorphisms $g : A \to M_2 (QA); a \mapsto \begin{pmatrix} a^0 & 0 \\ 0 & a^1 \end{pmatrix}$ and $A \to M_2 (QA) ; a \mapsto R(t) \begin{pmatrix} a^0 & 0 \\ 0 & a^1 \end{pmatrix} R(t)^\ast$ induce a homomorphism $h_t : QA \to M_2 (QA)$ by the universality of $QA$.
Here we set $R(t) = \begin{pmatrix} cos ( \frac{\pi}{2}t) & sin( \frac{\pi}{2}t) \\ -sin( \frac{\pi}{2}t) & cos ( \frac{\pi}{2}t) \end{pmatrix}$.
Moreover, we have $h_t (x) \in M_2 (qA)$ for $x \in qA \subset QA$.
By the continuity of $h_t$ with respect to $t$, $h_t : qA \to M_2 (qA)$ induces $h_0 \simeq h_1 $ where $h_1 $ is the above homomorphism $g$ and $h_0 = 0$.

\end{proof}

\begin{Lemma}
\label{t_homotopic_tozero1}
The involution $t : qA \to qA$ and the homotopy from $j \circ (id_{qA} \oplus t)\circ \Delta_{qA} \simeq 0 : qA \to M_2 (qA)$ in the proof of Lemma \ref{t_homotopic_tozero} give a canonical inverse  over the symmetric monoidal groupoid $\Pi_1 HOM (qA, \mathcal{K} \otimes B)$.
\end{Lemma}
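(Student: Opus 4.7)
The plan is to take $f\circ t$ as the inverse of $f$ in $\Pi_1 HOM(qA,\mathcal{K}\otimes B)$ and promote the explicit rotation homotopy built in the proof of Lemma \ref{t_homotopic_tozero} into the duality isomorphism required in the fundamental groupoid.

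First I would unpack the monoidal product. By the Corollary to Proposition \ref{K_sym_monoid}, the object $f\oplus_\mu (f\circ t)$ is obtained by taking the diagonal on $qA$, applying $f\oplus (f\circ t)$, amplifying via $j:\mathcal{K}\oplus\mathcal{K}\hookrightarrow M_2(\mathcal{K})$, and then conjugating by the chosen unitary $w$:
\begin{align}
f\oplus_\mu (f\circ t)=(\mathrm{id}_B\otimes Ad(w))\circ(\mathrm{id}_B\otimes j)\circ(f\oplus(f\circ t))\circ \Delta_{qA}.\notag
\end{align}
Lemma \ref{t_homotopic_tozero} already provides a canonical homotopy $h_s$, built from the rotation matrices $R(s)$, from the zero map to $j\circ(\mathrm{id}_{qA}\oplus t)\circ\Delta_{qA}:qA\to M_2(qA)$. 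After amplifying to $B\otimes\mathcal{K}$ in the appropriate way and post-composing with $\mathrm{id}_B\otimes Ad(w)$, the family $h_s$ becomes a norm-continuous path in the Kan simplicial set $HOM(qA,B\otimes\mathcal{K})$ from the zero morphism to $f\oplus_\mu(f\circ t)$. Its class in $\Pi_1 HOM(qA,B\otimes\mathcal{K})$ is the proposed counit $\epsilon_f$, and swapping the roles of $f$ and $f\circ t$ (using $t^2=\mathrm{id}$) gives the unit $\eta_f$.

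Next I would verify the coherence conditions making $(f\circ t,\epsilon_f,\eta_f)$ into a canonical inverse. Naturality of $\epsilon_f$ in $f$ is automatic since the rotation path depends only on $R(s)$, not on $f$. The triangle identities, together with compatibility with the associator $\alpha$ and unit cancellations $l,r$ produced in Proposition \ref{K_sym_monoid}, reduce after rewriting to the homotopy-uniqueness of paths of unitaries on a separable infinite dimensional Hilbert space: exactly the situation where Kuiper's theorem applies, as it already did in Proposition \ref{K_sym_monoid}. Compatibility with the symmetry of the monoidal structure likewise follows from the symmetry $R(s)\leftrightarrow R(1-s)$ of the rotation.

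The main obstacle will be bookkeeping: one must keep straight the various copies of $\mathcal{K}$ arising from the Cuntz-algebra amplification $q^M$, from the target factor $B\otimes\mathcal{K}$, and from the monoidal product $\oplus_\mu$, and check that all comparison unitaries between them intertwine $\mu$ on the nose up to norm-continuous paths. Once everything is re-expressed as a composition of canonical unitaries on countably infinite dimensional Hilbert spaces, the required higher coherences are forced by Kuiper's theorem, so the genuinely new geometric input of the lemma lies entirely in constructing $\epsilon_f$ (and its formal dual $\eta_f$) from the rotation homotopy.
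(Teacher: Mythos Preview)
Your proposal is correct and follows the same core idea as the paper: the inverse of $f$ is $f\circ t$, and the rotation homotopy of Lemma~\ref{t_homotopic_tozero} supplies the required isomorphism $f\oplus_\mu(f\circ t)\cong 0$.

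The packaging differs slightly. You build $\epsilon_f$ pointwise for each object $f$ and then argue naturality separately (observing that the rotation path is independent of $f$). The paper instead works once with the universal element: the homotopy $h$ is regarded as a single morphism $l_A$ in $\Pi_1 HOM(qA,M_2(qA))$, and the composition functor
\[
\Pi_1 HOM(qA,M_2(qA))\times\Pi_1 HOM(M_2(qA),M_2(\mathcal{K}\otimes B))\longrightarrow\Pi_1 HOM(qA,M_2(\mathcal{K}\otimes B))
\]
pushes $l_A$ forward to a natural transformation $c$ from $(j\circ(\mathrm{id}_{qA}\oplus t)\circ\Delta_{qA})^\ast$ to the constant functor at $0$. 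The paper then identifies $\mu\circ(1\times\tau)\circ\Delta$ with that pulled-back functor via the explicit chain $\mathrm{id}_{M_2(\mathbb{C})}\otimes(-)$ followed by $(Ad(w))_\ast$, so naturality comes for free from functoriality of composition. Your phrase ``amplifying to $B\otimes\mathcal{K}$ in the appropriate way'' is exactly this step; the paper's version makes the factorisation through $M_2(qA)$ explicit, which is what lets you avoid re-checking naturality by hand. Conversely, you go further than the paper in discussing the triangle identities and compatibility with $\alpha,l,r$; the paper's proof stops once the natural isomorphism to $0$ is produced, treating that as the datum of a canonical inverse.
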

\begin{proof}
The homomorphism $t : qA \to qA$ induces a based map $hom (qA, C(I^n , \mathcal{K} \otimes B)) \to hom(qA, C(I^n, \mathcal{K} \otimes B))$ which is an involution for $n \geq 0$.
Then it induces a based simplicial map $\tau : HOM(qA, \mathcal{K} \otimes B) \to HOM(qA, \mathcal{K} \otimes B)$ which is also an involution.

Let us denote by $h$ the homotopy from $j \circ (id_{qA} \oplus t)\circ \Delta_{qA} \simeq 0 : qA \to M_2 (qA)$ in the proof of Lemma \ref{t_homotopic_tozero}.
Then $h$ gives a morphism $l_A $ in $\Pi_1 HOM(qA, M_2 (qA))$.
Consider the following composition :
\begin{align}
\Pi_1 HOM(qA, M_2 (qA)) \times \Pi_1 HOM(M_2 (qA) , M_2 (\mathcal{K} \otimes B)) \to \Pi_1 HOM(qA, M_2 (\mathcal{K} \otimes B)) \notag
\end{align}
We obtain a natural isomorphism $c$ from $(j \circ (id_{qA} \oplus t)\circ \Delta_{qA} )^\ast $ to the constant functor valued at the zero homomorphism.
Since the composition $\mu \circ (1_{HOM(qA,\mathcal{K} \otimes B)} \times \tau)\circ \Delta_{HOM(qA,\mathcal{K} \otimes B)}$ coincides with the following compositions,
\begin{align}
&HOM(qA, \mathcal{K} \otimes B) \stackrel{id_{M_2(\mathbb{C})}\otimes}{\to} HOM(M_2(\mathbb{C})\otimes qA  , M_2 (\mathbb{C}) \otimes \mathcal{K} \otimes B) \notag \\
&\cong HOM(M_2 (qA), M_2 (\mathcal{K} \otimes B)) 
\stackrel{(j \circ (id_{qA} \oplus t)\circ \Delta_{qA} )^\ast}{\longrightarrow} HOM (qA, M_2(\mathcal{K} \otimes B)) \notag \\
&\stackrel{(Ad(w))_\ast}{\to} HOM(qA, \mathcal{K} \otimes B) \notag
\end{align}
the natural isomorphism $c$ induces a natural isomorphism from $\mu \circ (1_{HOM(qA,\mathcal{K} \otimes B)} \times \tau)\circ \Delta_{HOM(qA,\mathcal{K} \otimes B)}$ to the constant functor valued at the zero homomorphism.
It completes the proof.
\end{proof}

\begin{Corollary}
\label{based_H_space_stru}
A based finite set $(M,m)$ induces a structure of symmetric categorical group on the fundamental groupoid $\Pi_1 HOM (q^M A , \mathcal{K}^M \otimes B)$ which is natural with respect to $A,B$.
\end{Corollary}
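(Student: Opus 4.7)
The plan is to reduce the $M$-fold iterated setting to the single-index case already handled by Lemma~\ref{t_homotopic_tozero1}, using the basepoint $m \in M$ to single out one ``direction''. Write $M' := M \setminus \{m\}$. First I would establish a natural $*$-isomorphism $q^M A \cong q(q^{M'} A)$ that intertwines the basepoint involution on the left with the canonical involution of the outer $q$ on the right. Concretely, the free product $Q^M A = \Asterisk_{K \subset M} A$ splits as $Q^{M'} A \ast Q^{M'} A$ by grouping subsets of $M$ according to whether they contain $m$; under this splitting the Cuntz generator $q^M(a) = \sum_{K \subset M} (-1)^{\sharp K} a^K$ becomes the difference of the two inclusions $q^{M'}A \hookrightarrow Q^{M'} A \ast Q^{M'} A$ that define the outer $q$, and swapping the two copies recovers the basepoint involution $t$ on $q^M A$.

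Second, I would identify $\mathcal{K}^M \otimes B \cong \mathcal{K} \otimes (\mathcal{K}^{M'} \otimes B)$ in the obvious way by singling out the factor indexed by $m$. Under these two identifications the groupoid in question becomes $\Pi_1 HOM(q(q^{M'} A), \mathcal{K} \otimes (\mathcal{K}^{M'} \otimes B))$, which is precisely the situation of Lemma~\ref{t_homotopic_tozero1} applied with $A$ replaced by $q^{M'} A$ and $B$ replaced by $\mathcal{K}^{M'} \otimes B$. It therefore inherits a symmetric categorical group structure: the binary operation comes from the Corollary following Proposition~\ref{K_sym_monoid} applied to the distinguished $\mathcal{K}$ factor, while canonical inverses come from the involution $t$ together with the null-homotopy exhibited in Lemma~\ref{t_homotopic_tozero}.

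Naturality in $A$ and $B$ will then be automatic, since every ingredient --- the free-product decomposition, the identification of the Cuntz ideals, the tensor rearrangement, and the structure supplied by Lemma~\ref{t_homotopic_tozero1} --- is functorial in its inputs, and the null-homotopy used for the inverse is chosen independently of $A$ and $B$.

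The main obstacle I anticipate is the bookkeeping in the first step: checking carefully that the basepoint involution on $q^M A$ corresponds under the isomorphism $q^M A \cong q(q^{M'} A)$ to the canonical involution of the outer $q$, and verifying that the associator, unit cancellations and null-homotopy assemble coherently with the $(M-1)$-fold indexing so that the resulting symmetric categorical group axioms hold without ambiguity.
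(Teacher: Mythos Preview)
Your proposal is correct and follows exactly the same approach as the paper: the paper's proof is the one-line observation that $HOM(q^M A, \mathcal{K}^M \otimes B) \cong HOM(q(q^{M\setminus\{m\}}A), \mathcal{K} \otimes (\mathcal{K}^{M\setminus\{m\}} \otimes B))$ and then an appeal to Lemma~\ref{t_homotopic_tozero1}. You have spelled out the free-product decomposition and the involution matching in more detail than the paper does, but the underlying argument is identical.
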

\begin{proof}
By applying Lemma \ref{t_homotopic_tozero1} to $HOM(q^M A, \mathcal{K}^M \otimes B) \cong HOM(q (q^{M\backslash\{m \}}A), \mathcal{K} \otimes (\mathcal{K}^{M\backslash\{m \}} \otimes B))$, the claim is proved.
\end{proof}

\begin{Defn}
\label{SCG_ver_KK_defn}
We define a groupoid $\mathcal{KK}(A,B)$ as follows :
\begin{align}
\mathcal{KK} (A,B) \stackrel{\mathrm{def.}}{=}  \Pi_1 Q ( A,B) . \notag
\end{align}

\end{Defn}

\begin{prop}
\label{z_KK_SCGstr}
An element $z \in \mathbf{U}$ naturally gives a symmetric categorical group structure the groupoid $\mathcal{KK} (A,B)$.
Also it has canonical inverses.
\end{prop}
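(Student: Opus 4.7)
The plan is to transport the symmetric categorical group structures from Corollary \ref{based_H_space_stru} through the directed colimit defining $Q(A,B)$, using $z$ as a uniformly chosen basepoint for every stage of the colimit. Since the subsystem
$\{M \subset \mathbf{U} \mid M \text{ finite},\ z \in M\}$ is cofinal in the system of all finite subsets of $\mathbf{U}$, one has
\begin{align}
Q(A,B) \cong \varinjlim_{z \in M} HOM(q^M A, \mathcal{K}^M \otimes B),
\end{align}
and since fundamental groupoids commute with filtered colimits of Kan complexes, this yields a filtered-colimit presentation of $\mathcal{KK}(A,B)$. For each such $M$, Corollary \ref{based_H_space_stru} applied to the based finite set $(M,z)$ equips $\Pi_1 HOM(q^M A, \mathcal{K}^M \otimes B)$ with a symmetric categorical group structure with canonical inverses, where the multiplication $\mu_M$ is built from the unitary isomorphism $\mathbf{H} \oplus \mathbf{H} \cong \mathbf{H}$ acting on the $z$-indexed tensor factor of $\mathcal{K}^M$, and the canonical inverse $(\tau_M, c_M)$ comes from the involution $t$ on the outer $q$-layer of $q^M A = q(q^{M\setminus \{z\}} A)$.

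The key verification is that for $z \in N \subset M$ the transition map $j_{M,N}$ is a strict symmetric monoidal functor preserving canonical inverses. Because $z \in N$, the surjection $\pi_{M,N} : q^M A \to q^N A$ is the identity on the $z$-distinguished outer Cuntz layer and only modifies the inner indices in $M \setminus N$; dually, $p^{M\setminus N} \otimes (-) : \mathcal{K}^N \otimes B \to \mathcal{K}^M \otimes B$ inserts rank-one projections at positions in $M \setminus N$ and fixes the $z$-indexed $\mathcal{K}$-factor. Consequently both constituents of $j_{M,N}$ commute strictly with the multiplication homomorphism $\mu$ on the $z$-slot of $\mathcal{K}$ and with the involution $t$ on the outer $q$-slot, so the path data defining $\mu_M, \alpha_M, r_M, l_M$ in the proof of Proposition \ref{K_sym_monoid} and the homotopy data defining $\tau_M, c_M$ in Lemma \ref{t_homotopic_tozero1} are carried by $j_{M,N}$ to their counterparts at level $N$.

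Granted this compatibility, the filtered colimit inherits a symmetric categorical group structure with canonical inverses: multiplication and inverse of two morphisms are computed after lifting representatives to a common $\Pi_1 HOM(q^M A, \mathcal{K}^M \otimes B)$, coherence data are assembled componentwise, and well-definedness is automatic from the filteredness of the index set together with the strict compatibility just established. Naturality in $(A,B)$ descends from the naturality clause in Corollary \ref{based_H_space_stru} via the functoriality of $q^M(-)$ and $\mathcal{K}^M \otimes (-)$.

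The most delicate point will be upgrading the strict commutation of $j_{M,N}$ with $\mu$ and $t$ from the level of underlying homomorphisms to the level of associator, unitors, and inverse 2-cell. A clean way to handle this is to observe that the paths of unitaries and the rotation homotopies used in Proposition \ref{K_sym_monoid} and Lemma \ref{t_homotopic_tozero1} live entirely in the $z$-factor, and that Kuiper's theorem renders any two choices homotopic rel boundary; hence the $j_{M,N}$-images of these coherence homotopies coincide in $\Pi_1$ with the coherence homotopies chosen for $M$, and no genuine obstruction arises.
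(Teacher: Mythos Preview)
Your proposal is correct and follows essentially the same approach as the paper: restrict the colimit to the cofinal subsystem of finite $M \subset \mathbf{U}$ containing $z$, invoke Corollary~\ref{based_H_space_stru} on each stage, and pass the resulting symmetric categorical group structure to the colimit. The paper's proof records only the cofinality step and the resulting simplicial isomorphism, leaving the compatibility of the transition maps $j_{M,N}$ with the monoidal and inverse data implicit; your write-up makes these checks explicit, which is a genuine improvement in clarity rather than a different argument.
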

\begin{proof}
We have a simplicial isomorphism :
\begin{align}
\varinjlim_{(M,z)} HOM ( q^M A, \mathcal{K}^M \otimes B  ) \to Q(A,B)  . \notag
\end{align}
Here, $(M,z)$ in the left hand side runs on the directed set consisting of finite subsets of $\mathbf{U}$ based at $z\in \mathbf{U}$.
We obtain a SCG structure on the fundamental groupoid $\Pi_1 Q(A,B)$ of the simplicial set $Q(A,B)$.
\end{proof}

\begin{prop}
\label{SCG_kasparov_produ}
For a bijection of sets $\mathbf{U} \amalg \mathbf{U} \cong \mathbf{U}$, we have a functor natural with respect to C*-algebras $A,B,D$ :
\begin{align}
\otimes_D : \mathcal{KK} (A,D) \times \mathcal{KK} (D,B) \to \mathcal{KK} (A,B)  . \notag
\end{align}

\end{prop}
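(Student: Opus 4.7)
The plan is to upgrade the simplicial map $\otimes_D : Q(A,D)\times Q(D,B)\to Q(A,B)$ already constructed in subsection \ref{KKtheory_scg_prelimi} to a functor between fundamental groupoids, after resolving the bookkeeping issue that the formula naturally produces indices in finite subsets of $\mathbf U\amalg\mathbf U$ rather than of $\mathbf U$.

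First I would fix the bijection $\sigma : \mathbf U\amalg\mathbf U\stackrel{\cong}{\to}\mathbf U$ in the hypothesis, and use it to identify every finite subset of $\mathbf U\amalg\mathbf U$ with a finite subset of $\mathbf U$. With this convention, for all pairs of finite subsets $M,N\subset\mathbf U$ the target $HOM(q^{M\amalg N}A,\mathcal K^{M\amalg N}\otimes B)$ of the simplicial map defined in the excerpt sits canonically inside the direct system defining $Q(A,B)$. I would then check that the collection of maps
\begin{align*}
HOM(q^M A,\mathcal K^M\otimes D)\times HOM(q^N D,\mathcal K^N\otimes B)\longrightarrow HOM(q^{M\amalg N}A,\mathcal K^{M\amalg N}\otimes B)
\end{align*}
is compatible with the structure maps $j_{M,M'}$ and $j_{N,N'}$ of the two direct systems indexing the domain. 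This is essentially a naturality check for the canonical homomorphisms $q^N(\mathcal K^M\otimes D)\to\mathcal K^M\otimes q^N D$ and $q^{M\amalg N}A\to q^N(q^M A)$, together with the fact that tensoring by the rank one projection $p$ commutes with all relevant homomorphisms and with the maps $\pi_{M,M'}$. Granting this, taking the direct limit in both factors yields a well-defined simplicial map $\otimes_D : Q(A,D)\times Q(D,B)\to Q(A,B)$.

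Next I would apply the fundamental groupoid functor $\Pi_1$. Since the three simplicial sets $Q(A,D)$, $Q(D,B)$ and $Q(A,B)$ all satisfy the Kan condition (as colimits of Kan simplicial sets built from $HOM(-,C(\triangle(\bullet),-))$), and since $\Pi_1$ preserves finite products of Kan complexes up to canonical equivalence, the map above descends to a functor
\begin{align*}
\otimes_D : \mathcal{KK}(A,D)\times\mathcal{KK}(D,B)\longrightarrow\mathcal{KK}(A,B),
\end{align*}
as required by Definition \ref{SCG_ver_KK_defn}.

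Finally, naturality in $A,B,D$ is verified by noting that every ingredient in the construction, namely the composition of homomorphisms, the tensor product of homomorphisms, the unit $p\otimes(-)$, and the canonical homomorphisms $q^{M\amalg N}A\to q^N(q^M A)$ and $q^N(\mathcal K^M\otimes D)\to\mathcal K^M\otimes q^N D$, is natural in the relevant C*-algebra variable. I expect the main obstacle to be the coherence bookkeeping: checking that the two independent direct limits (over $M$ and $N$) are compatible with the single direct limit (over $M\amalg N$, transported to $\mathbf U$ by $\sigma$) in a manner that is independent, up to the homotopies $\Pi_1$ quotients by, of auxiliary choices such as the ordering used to form $M\amalg N$ and the location of the rank one projections. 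This is exactly the point where the use of the fundamental groupoid rather than the strict $0$-simplices is essential, since the ambiguities become invertible $1$-morphisms in $\mathcal{KK}(A,B)$.
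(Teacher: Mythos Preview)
Your proposal is correct and follows essentially the same route as the paper: the paper's proof simply recalls that the simplicial map $\otimes_D : Q(A,D)\times Q(D,B)\to Q(A,B)$ was already constructed in subsection~\ref{KKtheory_scg_prelimi} and then applies $\Pi_1$, declaring naturality to be clear by construction. You have merely made explicit the role of the bijection $\mathbf U\amalg\mathbf U\cong\mathbf U$ in reindexing $M\amalg N$ and the direct-system compatibility that the paper asserts without verification; your discussion of coherence ambiguities is additional caution that the paper does not address.
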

\begin{proof}
Recall that we constructed a natural pairing $Q(A,D) \times Q(D,B) \to Q(A,B)$ in subsection \ref{KKtheory_scg_prelimi} where $A,B,C$ are C*-algebras.
By definitions it induces a functor $\otimes_D : \mathcal{KK}(A,D) \times \mathcal{KK}(D,B) \to \mathcal{KK}(A,B)$ which is natural with respect to $A,B,D$.
\end{proof}

By definitions, we have a natural isomorphism for C*-algebras $A,B$.
\begin{align}
\pi_n (HOM (A,B)) \cong [A, S^n  B] . \notag
\end{align}
Here $[A,B]$ denotes the set of homotopy classes of homomorphisms from $A$ to $B$.
Thus, considering the Cuntz picture of $KK$-theory, we obtain a natural isomorphisms of groups for C*-algebras $A,B$ :
\begin{align}
\pi_0 (\mathcal{KK} (A,B)) &\cong KK (A,B) , \notag \\
\pi_1 (\mathcal{KK} (A,B)) &\cong KK(A,S B) . \notag
\end{align}
Here, we consider the unit object of $\mathcal{KK}(A,B)$ as the basepoint to take $\pi_1 (\mathcal{KK}(A,B))$, but it holds for arbitrary basepoints since $\mathcal{KK}(A,B)$ has a symmetric categorical group structure.
Under these isomorphisms, the pairing $\otimes_D$ in Propostion \ref{SCG_kasparov_produ} induces the Kasparov product $\otimes_D : KK (A,D) \times KK (D,B) \to KK (A,B)$.

In the following statements, we explain `bilinearity' of the functor $\mathcal{KK}(-,-)$ with respect to the direct sum of C*-algebras.
Proposition \ref{B_oplus_oplus}, \ref{A_oplus_oplus} correspond to Proposition \ref{E_vee_oplus}, \ref{F_vee_oplus} in generalized (co)homology settings respectively.
\begin{prop}
\label{B_oplus_oplus}
Let $A,B,B^\prime$ be C*-algebra.
Let $B\oplus B^\prime \to B$ and $B\oplus B^\prime \to B^\prime$ be the canonical projections.
They induce an isomorphism of groupoids :
\begin{align}
\label{KKK_directsum_isom}
\mathcal{KK} (A, B\oplus B^\prime) \to \mathcal{KK} (A, B) \times \mathcal{KK} (A, B^\prime) .
\end{align}

\end{prop}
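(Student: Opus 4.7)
The plan is to reduce the claim to a levelwise isomorphism of the simplicial sets defining $Q(A,-)$, and then apply the fundamental groupoid functor $\Pi_1$.

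First, I would observe that in the category of C*-algebras and $*$-homomorphisms, the direct sum $B \oplus B'$ is the categorical product, so $hom(D, B \oplus B') \cong hom(D, B) \times hom(D, B')$ via post-composition with the projections, for any C*-algebra $D$. Combining this with the distributivity $\mathcal{K}^M \otimes (B \oplus B') \cong (\mathcal{K}^M \otimes B) \oplus (\mathcal{K}^M \otimes B')$ (which holds because $\mathcal{K}^M$ is nuclear, so the tensor product distributes over finite direct sums), I obtain a natural bijection
\begin{align}
hom(q^M A, \mathcal{K}^M \otimes (B \oplus B')) \cong hom(q^M A, \mathcal{K}^M \otimes B) \times hom(q^M A, \mathcal{K}^M \otimes B') . \notag
\end{align}
Next, applying the same argument with $B$ replaced by $C(\triangle(n), B)$ and using the analogous identity $C(\triangle(n), B \oplus B') \cong C(\triangle(n), B) \oplus C(\triangle(n), B')$, I promote this to a simplicial isomorphism
\begin{align}
HOM(q^M A, \mathcal{K}^M \otimes (B \oplus B')) \cong HOM(q^M A, \mathcal{K}^M \otimes B) \times HOM(q^M A, \mathcal{K}^M \otimes B') \notag
\end{align}
whose two components are exactly the simplicial maps induced by the projections.

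Second, I would verify that these isomorphisms are compatible with the transition maps $j_{M,N}$ appearing in the direct system defining $Q(A,-)$. Both $j_{M,N}$-maps on the two sides are built from precomposition by $\pi_{M,N} : q^M A \to q^N A$ and postcomposition by $p^{M \setminus N} \otimes (-)$, and both operations respect the direct sum decomposition (the latter simply because $p^{M\setminus N} \otimes -$ is applied identically to each summand). Hence, passing to the filtered colimit indexed by $(M, z)$ and using that filtered colimits of simplicial sets commute with finite products, I obtain a simplicial isomorphism $Q(A, B \oplus B') \cong Q(A, B) \times Q(A, B')$.

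Finally, since $\Pi_1$ preserves finite products of Kan simplicial sets, applying $\Pi_1$ yields the desired isomorphism of groupoids (\ref{KKK_directsum_isom}), and by construction each component agrees with the functor induced by the corresponding projection $B \oplus B' \to B$ or $B \oplus B' \to B'$. The main potential subtlety is checking that the filtered colimit really commutes with the binary product of simplicial sets in the relevant sense; this is standard since products of simplicial sets are computed levelwise and filtered colimits of sets commute with finite products, so this is not a genuine obstacle. Everything else reduces to the universal property of the C*-algebraic direct sum.
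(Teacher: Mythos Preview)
Your argument is correct and in fact more direct than the paper's. The paper does not prove the simplicial isomorphism $Q(A,B\oplus B')\cong Q(A,B)\times Q(A,B')$ explicitly; instead it argues in two steps: first, applying $\pi_0$ and $\pi_1$ to the functor (\ref{KKK_directsum_isom}) and invoking the known additivity of $KK$-groups in the second variable shows that (\ref{KKK_directsum_isom}) is an \emph{equivalence} of groupoids; second, it observes separately that the functor is a bijection on object classes ``by definition of $\mathcal{KK}(-,-)$'', and concludes that an equivalence which is bijective on objects is an isomorphism. Your route subsumes both steps at once: by establishing the levelwise bijection on $HOM$ and passing to the filtered colimit you get an isomorphism of Kan simplicial sets, so after applying $\Pi_1$ you obtain an isomorphism of groupoids outright, with no need to check $\pi_0,\pi_1$ or invoke prior $KK$-additivity. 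The trade-off is that the paper's argument is shorter on the page and reuses the familiar additivity of $KK$, while yours is self-contained and actually proves the stronger underlying simplicial statement.
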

\begin{proof}
The maps induced by applying $\pi_0, \pi_1$ to (\ref{spectra_wedge_isom}) are isomorphisms.
Hence (\ref{KKK_directsum_isom}) gives an equivalence of groupoids.
Moreover, it is obvious that the functor induces a bijection between object classes by definition of $\mathcal{KK}(-,-)$.
Therefore, (\ref{KKK_directsum_isom}) is an isomorphism of groupoids.
\end{proof}

Let $A,A^\prime,B$ be C*-algebras.
Denote by the canonical inclusions $i : A \to A \oplus A^\prime$, $i^\prime : A^\prime \to A \oplus A^\prime$.
They induce a functor $f$ by
\begin{align}
\mathcal{KK} (A ,B) \oplus \mathcal{KK} (A^\prime , B) &\to \mathcal{KK} (A \oplus A^\prime ,B) \notag \\
(a,b) &\mapsto i^\ast (a) \oplus (i^\prime)^\ast (b) \notag 
\end{align}
If $p : A \oplus A^\prime \to A$ and $p^\prime : A \oplus A^\prime \to A^\prime$ denotes the projections, we obtain a functor $g$, 
\begin{align}
\mathcal{KK}  (A \oplus A^\prime , B) &\to \mathcal{KK} (A,B) \oplus \mathcal{KK} (A^\prime , B) \notag \\
a &\mapsto (p^\ast (a) , (p^\prime)^\ast (a)) \notag
\end{align}

\begin{prop}
\label{A_oplus_oplus}
The functors $f,g$ are lifted to an adjoint equivalence of symmetric categorical groups with canonical inverses.
\end{prop}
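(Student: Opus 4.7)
The plan is to mirror the proof of Proposition \ref{F_vee_oplus} almost verbatim, replacing wedge sums of spectra with direct sums of C*-algebras and $MOR(S\wedge E, F)$ with $Q(A,B)$. The key observation is that although the direct sum $A\oplus A'$ is the categorical product (not coproduct) of C*-algebras, $KK$-theory is additive in both variables, which is enough once we work at the level of the groupoid $\mathcal{KK}(-,-)$.

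First I would verify $g\circ f \cong id$ by a direct computation. Given $(a,b)\in \mathcal{KK}(A,B)\oplus \mathcal{KK}(A',B)$, unwinding definitions gives
\begin{align}
g(f(a,b)) = \bigl( p^{\ast} i^{\ast}(a) \oplus p^{\ast} (i')^{\ast}(b),~ (p')^{\ast} i^{\ast}(a) \oplus (p')^{\ast} (i')^{\ast}(b)\bigr). \notag
\end{align}
Because $p\circ i = id_A$ and $p'\circ i' = id_{A'}$, we have $p^{\ast} i^{\ast} = id$ and $(p')^{\ast}(i')^{\ast} = id$; and because $p\circ i'$ and $p'\circ i$ are the zero homomorphisms, the mixed terms $p^{\ast}(i')^{\ast}(b)$ and $(p')^{\ast}i^{\ast}(a)$ are the unit objects of their respective $\mathcal{KK}$-groupoids. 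Hence $g(f(a,b))\cong (a,b)$, yielding a natural isomorphism $\eta : g\circ f \cong id$ which is natural in $A,A',B$.

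Next I would show that $f$ is an equivalence of groupoids by verifying that $\pi_0 f$ and $\pi_1 f$ are isomorphisms. Under the identification $\pi_0 \mathcal{KK}(C,B) \cong KK(C,B)$ and $\pi_1 \mathcal{KK}(C,B) \cong KK(C,SB)$, this reduces to the additivity of $KK$-theory in the first variable, i.e.\ the split isomorphisms
\begin{align}
KK(A\oplus A', C) \cong KK(A,C) \oplus KK(A',C) \notag
\end{align}
for $C = B$ and $C = SB$, which is a standard property of $KK$-theory induced by the projections and inclusions of $A\oplus A'$. Combining $\eta$ with the equivalence of groupoids, there exists a unique natural isomorphism $\epsilon : id \cong f\circ g$ such that $(f,g,\eta,\epsilon)$ forms an adjoint equivalence of groupoids.

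Finally I would upgrade the adjoint equivalence to one of symmetric categorical groups with canonical inverses. The functor $g$ is strictly symmetric monoidal and preserves canonical inverses essentially by construction, since it is defined coordinatewise via the projections $p,p'$ and the symmetric monoidal structure on $\mathcal{KK}(A\oplus A',B)$ (coming from Proposition \ref{z_KK_SCGstr}) and its canonical inverses are determined on each summand. Then $f$ inherits a symmetric monoidal structure preserving canonical inverses, uniquely determined by the adjoint equivalence data, exactly as in the proof of Proposition \ref{F_vee_oplus}. The main obstacle I anticipate is purely bookkeeping: checking that the canonical inverses (built in Corollary \ref{based_H_space_stru} from the involution $t$ on $q^M A$ and the homotopy of Lemma \ref{t_homotopic_tozero1}) are genuinely preserved by $g$ strictly, rather than only up to coherent isomorphism; but this follows because the involution $t$ and the canonical null-homotopy are defined functorially in the first variable and respect the projections $p,p'$.
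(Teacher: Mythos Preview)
Your proposal is correct and matches the paper's approach exactly: the paper's proof consists of the single sentence ``The proof proceeds in a parallel way to the proof of Proposition \ref{F_vee_oplus},'' and what you have written is precisely that parallel argument spelled out in detail.
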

\begin{proof}
The proof proceeds in a parallel way to the proof of Proposition \ref{F_vee_oplus}.
\end{proof}

\begin{remark}
There is a Puppe exact sequence of $KK$-theory (see section 19 \cite{Blackadar}).
It is an exact sequence associated with two C*-algebras, a homomorphism of them and its cone.
We also have a Puppe exact sequence on the symmetric categorical group version of $KK$-theory in a similar way with Remark \ref{rel2exactsequ_remark}.
\end{remark}


\bibliography{A_generalization_of_DW_theory}{}
\bibliographystyle{alpha}

\end{document}